\newtheorem{proposition}{Proposition}
\begin{document}

\begin{frontmatter}

\title{Anisotropic Mesh Filtering by Homogeneous MLS Fitting}

\author{Xunnian Yang\\School of Mathematical Sciences, Zhejiang University, China\\Email: yxn@zju.edu.cn}


\begin{abstract}
  In this paper we present a novel geometric filter, a homogeneous moving least squares fitting-based filter (H-MLS filter), for anisotropic mesh filtering. Instead of fitting the noisy data by a moving parametric surface and projecting the noisy data onto the surface, we compute new positions of mesh vertices as the solutions to homogeneous least squares fitting of moving constants to local neighboring vertices and tangent planes that pass through the vertices. The normals for defining the tangent planes need not be filtered beforehand but the parameters for balancing the influences between neighboring vertices and neighboring tangent planes are computed robustly from the original data under the assumption of quadratic precision in each tangent direction. The weights for respective neighboring points for the least squares fitting are computed adaptively for anisotropic filtering. The filter is easy to implement and has distinctive features for mesh filtering. (1) The filter is locally implemented and has circular precision, spheres and cylinders can be recovered exactly by the filter. (2) The filtered mesh has a high fidelity to the original data without any position constraint and salient or sharp features can be preserved well. (3) The filter can be used to filter meshes with various kinds of noise as well as meshes with highly irregular triangulation.
\end{abstract}

\begin{keyword}
Anisotropic mesh filtering \sep geometric Hermite data \sep homogeneous MLS \sep feature preservation
\end{keyword}

\end{frontmatter}



\section{Introduction}
\label{Sec:Introduction}
Along with the development of 3D scanning devices, triangular meshes have
been widely used to represent surface shapes in the fields of geometric
modeling, computer graphics and computer vision. Due to the accuracy
limitations of data capturing, data loss caused by storage or transmission,
etc., geometric models represented by triangular meshes can include noise.
Thus, constructing a visually smooth triangular mesh from a noisy input one plays
a fundamental role for many disciplines and applications.

\begin{figure}[tpb]
  \centering
  \subfigure[]{\includegraphics[width=3.6cm]{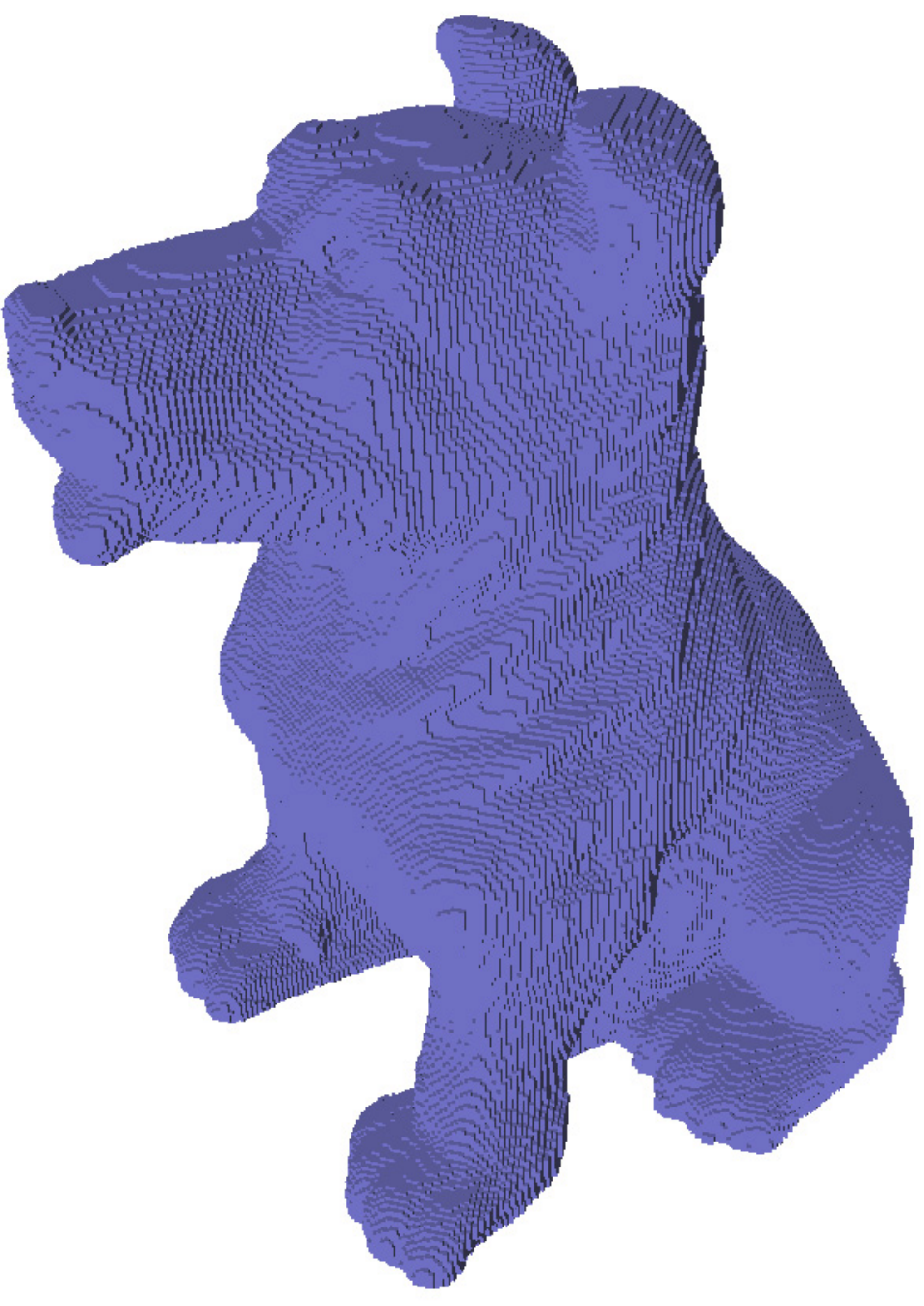}}
  \subfigure[]{\includegraphics[width=3.6cm]{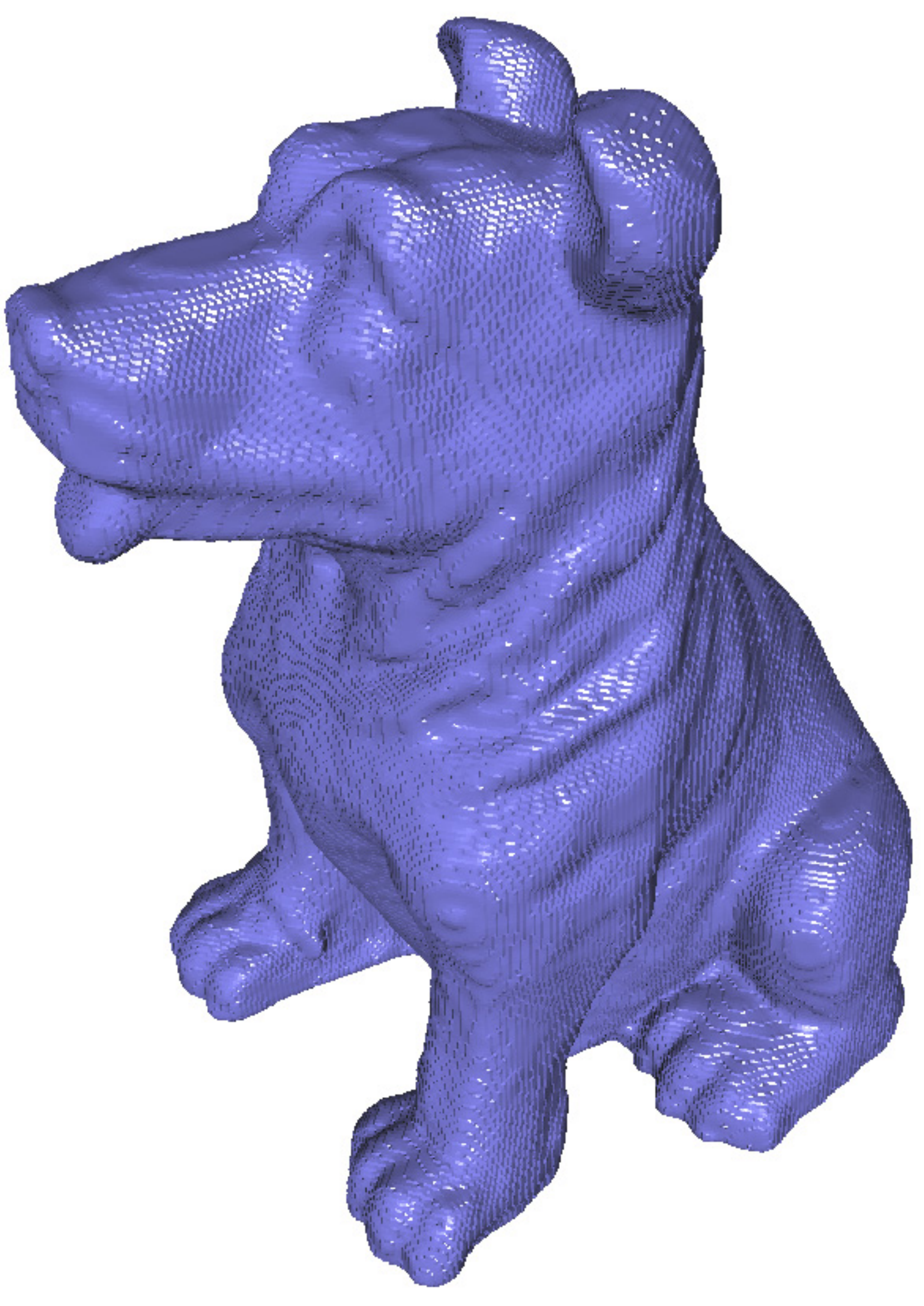}}
  \subfigure[]{\includegraphics[width=3.6cm]{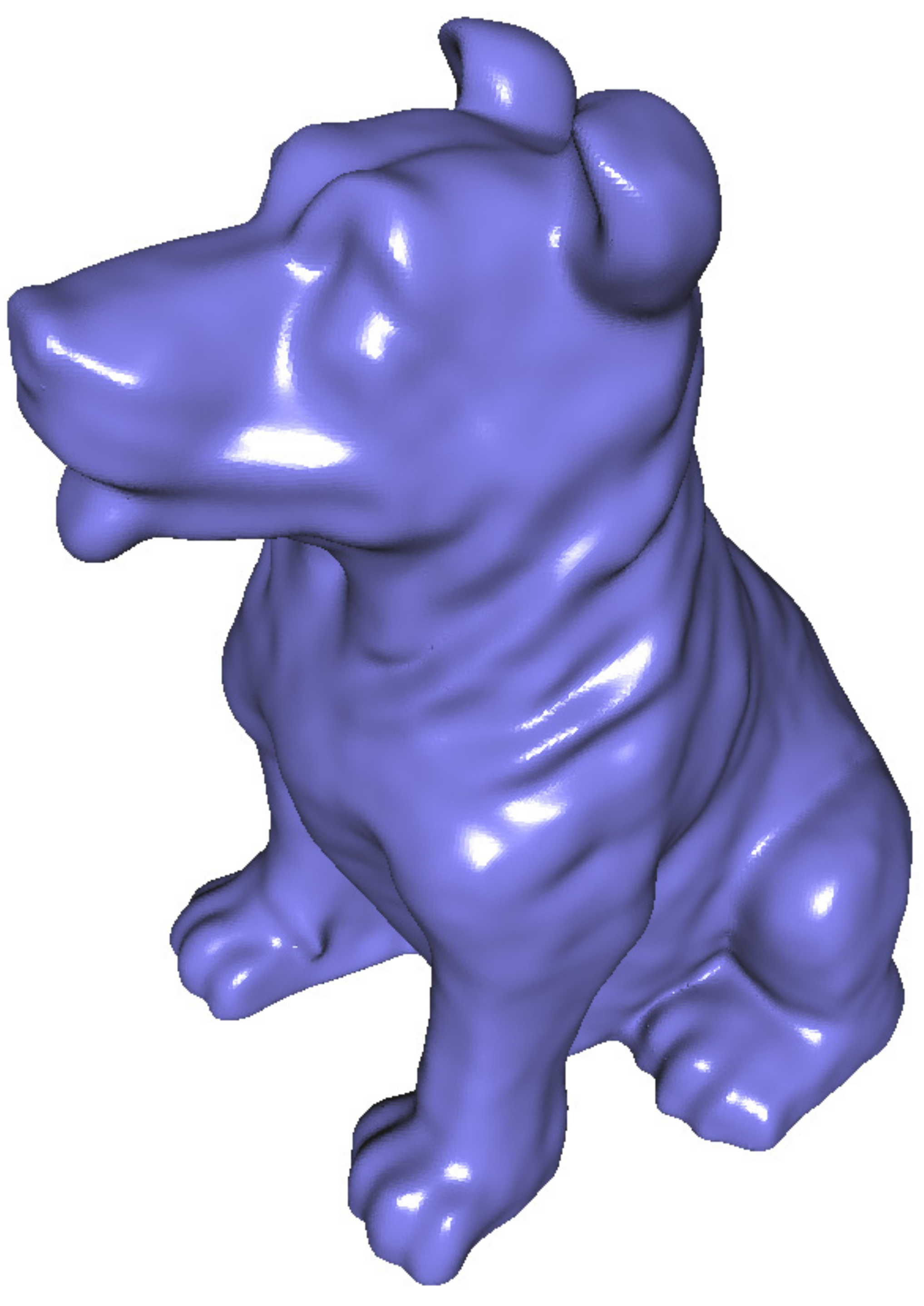}}
  \subfigure[]{\includegraphics[width=3.6cm]{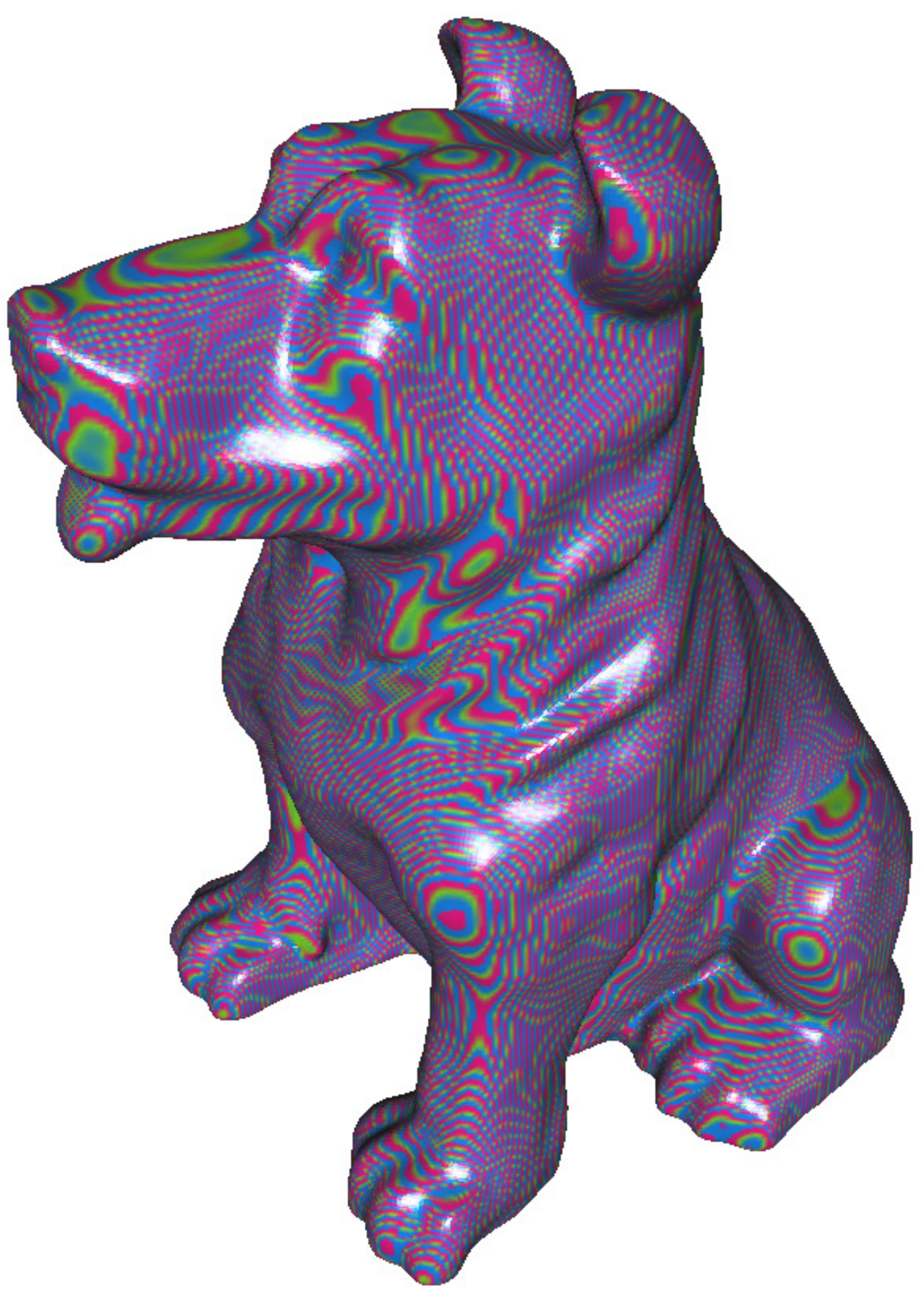}}
  \caption{Mesh filtering by H-MLS filter: (a) the surface mesh of a binary voxel model;
  (b)\&(c) meshes after 2 or 5 iterations of filtering; (d) error plot. The purple, green
  and cyan colors in (d) represent the positive, zero or negative distances from the noisy
  vertices to the filtered surface. In this paper all noisy and filtered meshes are flat shaded if there is no special declaration.}
  \label{Fig:Filtering cubicdog_noisy}
\end{figure}

Many mesh filtering algorithms have been invented by adapting filters for
signal processing or techniques for continuous surface smoothing. However,
filtering a noisy mesh is more challenging than filtering a 1D or 2D signal
or fairing a continuous surface represented by explicit parametric (usually
spline) patches. It is more difficult to distinguish sharp features from a
noisy mesh than on a piecewise smooth surface. The triangulation of a surface
mesh may be highly irregular, direct definition of discrete differential
operators on a triangular mesh may be inaccurate. Minor deformation of a
filtered signal or image is nearly invisible, but the deformation of a
surface mesh can be noticed clearly.

In this paper we present a novel geometric filter for mesh filtering.
Figure~\ref{Fig:Filtering cubicdog_noisy} illustrates an example of mesh
filtering by our proposed H-MLS filter. Given a triangulated surface
mesh of a binary voxel model, a high fidelity smooth surface is obtained just
by a few iterations of filtering without any position constraint. Stairs on
original noisy surface have been removed effectively while salient features
are preserved perfectly after filtering.

Our proposed technique is motivated by moving least squares (MLS) surface
construction from geometric Hermite data~\citep{Yang2016:MatrixWeightRational}.
Instead of constructing a smooth parametric surface explicitly, we compute
new position of each mesh vertex as the solution to the homogeneous least squares fitting of
moving constants to neighboring vertices, tangent planes at the vertices as
well as a line passing through or near the original noisy vertex.
In order to achieve a high accuracy of approximation, a local shape parameter is introduced to balance the influences
between neighboring vertices and neighboring tangent planes. Weights for
respective neighboring vertices or planes are used to adapt or control local
surface features.

When the filtered vertices are obtained as the solutions to homogeneous moving least squares
fitting, the filtering effects depend on the parameters and weights heavily.
We use data dependent parameters for high fidelity anisotropic mesh filtering.
In order to attenuate the influences of data noise, we compute weights and parameters
for least squares fitting from the local geometric data directly under the
assumption of quadratic precision in each tangent direction. This filter is
flexible enough to recover vertices in smooth surface regions with low or
high curvatures as well as vertices lying on sharp edges or corners.

Briefly, the main contributions of the paper are as follows.
\begin{itemize}
  \item We filter mesh vertices by least squares fitting of moving constants to generalized Hermite data, and the new positions of vertices are directly obtained.
  \item The weights and parameters for the least squares fitting are chosen for high accuracy approximation or feature preserving filtering.
  \item The proposed filter is simple but very powerful for filtering meshes with various kinds of noise or triangulation.
\end{itemize}

The paper is organized as follows. In Section~\ref{Section:previous work} we
give a brief review and analysis of existing methods for mesh filtering or
denoising. The motivation of the proposed mesh filter is introduced in
Section~\ref{Section:MLS surface fitting} and
Section~\ref{Section:point-normal filter} presents the algorithm steps of the
new filter. Experimental results are given in Section~\ref{Section:results}
and the paper is concluded in Section~\ref{Section:conclusion} with a brief
summary and discussion.

\section{Previous work} \label{Section:previous work}

Lots of algorithms have been proposed for mesh filtering or mesh denoising in
the past two decades. According to the basic principles for mesh denoising,
the existing algorithms can be roughly classified into four categories.

$\bullet$ \textbf{Signal processing-based methods.} Under the assumption that
noises are high frequency signals, many low-pass filters or statistics-based
filters for signal or image processing have been generalized to process
surface meshes~\citep{Tau95,Alexa02,Peng2001,Fleishman03,Diebel06,Sun08,Jones03}.
These methods can usually give visually smooth results, but the challenges
are how to distinguish noise from sharp features or how to avoid local or
global deformation of the filtered meshes. Practically, a prior step for
recognizing features from the noisy data is used for feature-preserving mesh
denoising~\citep{Shimizu2005,Fan10,BianTong2011,WeiLPWLW17:TensorVotingGuidedMeshDenoising}.

$\bullet$ \textbf{PDE or optimization methods.} Based on the assumption that
a triangular mesh is a discrete approximation of a smooth surface and vertex
noise can cause locally high curvatures, techniques based on geometric flows
or anisotropic diffusion have been developed for mesh fairing or
feature-preserving mesh
filtering~\citep{BaXu03,Clar00,Polthier04,ZhangBenHamza2007}. By using $l_1$
or $l_0$ norm of surface variations as the smoothness measure, sharp features
or piecewise flat patches can be recovered~\citep{HeSchaefer13:L0meshdenoising, WuZCF15:L1meshdenoising}.
When a PDE model has been used for mesh smoothing, proper definition of
discrete differential operators plays an important role for the smoothing
results~\citep{Desbrun99,Polthier07}. Meshes faired by PDE models can have
high quality but may deviate from the original surfaces, one should carefully
choose parameters to balance the surface quality and data
fidelity~\citep{CentinS18:MeshdenoisingwithGeoMetricFidelity}.

$\bullet$ \textbf{Normal-guided or data-driven methods.} Piecewise smooth
normal mapping can help to restore smooth surfaces with feature preserving.
Techniques for filtering facet normals and refitting surface meshes to the
filtered normals have been studied extensively in recent
years~\citep{ZhangWuZD15:variationMeshNormalDenoising,
ZhangDZBL15:guidedNormalFiltering, Shen04, Sun07, ZhengFuTai2011,
YadavRP18:Normalvotingtensorbinaryoptim}. Tasdizen, et al.~(\citeyear{Tasd03})
proposed to filter surface normals by a diffusion equation and refit an
implicit surface to the filtered normals. An interesting way is to filter
facet normals by data
driven-methods~\citep{WangLT16:cascadednormalregression}, of which the effects
depend much on the quality and amount of training data. Normal filtering does
work for mesh denoising, but it is actually a chicken-and-egg problem to
filter normals for mesh denoising. Normals of surface meshes constructed from
medical image data or binary voxel models may not be filtered correctly by
conventional methods. How to avoid flipped edges when refitting triangles to
the filtered normals is also a challenge.

$\bullet$ \textbf{MLS surface fitting methods.} Moving least-squares fitting
can be used to construct smooth surfaces from noisy input data and the
original noisy data can also be filtered by projecting points onto the MLS
surface~\citep{LipmanCL07:data-dependentMLSapproximation,FleishmanCS05:MLSfitting,Huangetal2013EdgeAwarePointSetResampling}.
For mesh filtering purposes, especially for filtering meshes constructed from
medical image data, Wang et al.~(\citeyear{WangY11:meshsmoothingvialocalsurfacefitting}) proposed to filter
mesh vertices by fitting a local quadratic surface to neighborhood of every
vertex. Similarly, Wei et al.~(\citeyear{WeiZYWPWQH15:smoothbinaryvolumes})
employed biquadratic B\'{e}zier surfaces for data fitting and point
projection. Because a linear system has to be solved for fitting each local
surface, how to guarantee the numerical stability and how to improve the
computational efficiency have to be considered seriously.

Instead of fitting selected points by moving low-order surface patches, our proposed mesh filter is derived by homogeneous least-squares fitting of moving constants to selected geometric Hermite data. The solutions to the least squares fitting of moving constants can be obtained directly with no need of solving any linear system. By properly chosen parameters for the least squares fitting, points on low or high curvature regions can be restored effectively. Moreover, points on sharp edges or surface meshes with highly irregular triangulation can also be restored very well.

\section{Homogeneous MLS Fitting to Geometric Hermite Data}\label{Section:MLS surface fitting}

Our proposed mesh filter is motivated by a technique of constructing homogeneous MLS (H-MLS) surfaces
from point-normal pairs~\citep{Yang2016:MatrixWeightRational}.
Differently from conventional MLS surfaces which fit moving low-order
polynomials to points, H-MLS surfaces constructed from point-normal pairs fit
moving constants to points and tangent planes at the points. Constructing H-MLS surfaces from
point-normal pairs needs not solving any linear system and the obtained
surfaces can have circular precision in each tangent direction. Due to its
simplicity and high accuracy, the point evaluation scheme for H-MLS
surfaces constructed from point-normal pairs will be modified for mesh
filtering.

Assume $\{\mathbf{p}_i\}_{i=1}^N$ are the vertices of a polygon or a surface
mesh and $\{\mathbf{n}_i\}_{i=1}^N$ are unit normal vectors at the points.
Assume $\{\xi_i\}_{i=1}^N$ are the corresponding parameters on a 1D or 2D
domain. Let $\phi(\xi)=\phi(|\xi|)$ be a non-negative, symmetric and
decreasing function. An H-MLS curve or surface that fits the points and the planes passing through the points with given
normals can be obtained by minimizing the following objective function
\begin{equation}\label{Eqn:LS-surfaceObjective}
  F(\mathbf{p})=\frac{1}{2}\sum_{i=1}^N\phi_i(\xi)(\mathbf{p}-\mathbf{p}_i)^2
  +\frac{1}{2}\sum_{i=1}^N\mu_i\phi_i(\xi)[(\mathbf{p}-\mathbf{p}_i)\cdot \mathbf{n}_i]^2,
\end{equation}
where $\phi_i(\xi)=\phi(|\xi-\xi_i|)$, $i=1,2,\ldots,N$. The
parameters $\mu_i$s are used to balance the influences of points and
lines/planes that pass through the points. When all the
parameters satisfy $\mu_i>-1$, it is guaranteed that the objective functional
$F(\mathbf{p})$ is convex. By minimizing $F(\mathbf{p})$ or solving the
equation $\frac{\partial F(\mathbf{p})}{\partial \mathbf{p}}=0$, a curve or
surface that fits the original data is obtained as
\[
\mathbf{p}=\mathbf{p}(\xi)=\left[\sum_{i=1}^N
M_i\phi_i(\xi)\right]^{-1}\sum_{i=1}^N M_i\mathbf{p}_i\phi_i(\xi),
\]
where $M_i=I+\mu_i\mathbf{n}_i\mathbf{n}_i^T$, $i=1,2,\ldots,N$, and $I$ is the
identity matrix. Figure~\ref{Fig:MLS_torus}(a) illustrates an H-MLS surface
constructed from point-normals which were originally sampled from a torus.
Particularly, the points are uniformly parametrized and $\phi(\xi)$ is chosen as
the standard bicubic B-spline function. By choosing all $\mu_i=1.25$ for
defining the functional (\ref{Eqn:LS-surfaceObjective}), the obtained H-MLS
surface lies closely to the sampled points. As a comparison, if we choose all
$\mu_i=0$ within the functional, the H-MLS surface reduces to a bicubic
B-spline surface; see Figure~\ref{Fig:MLS_torus}(b).

\begin{figure}[htb]
\centering
  \subfigure[]{\includegraphics[width=4.0cm]{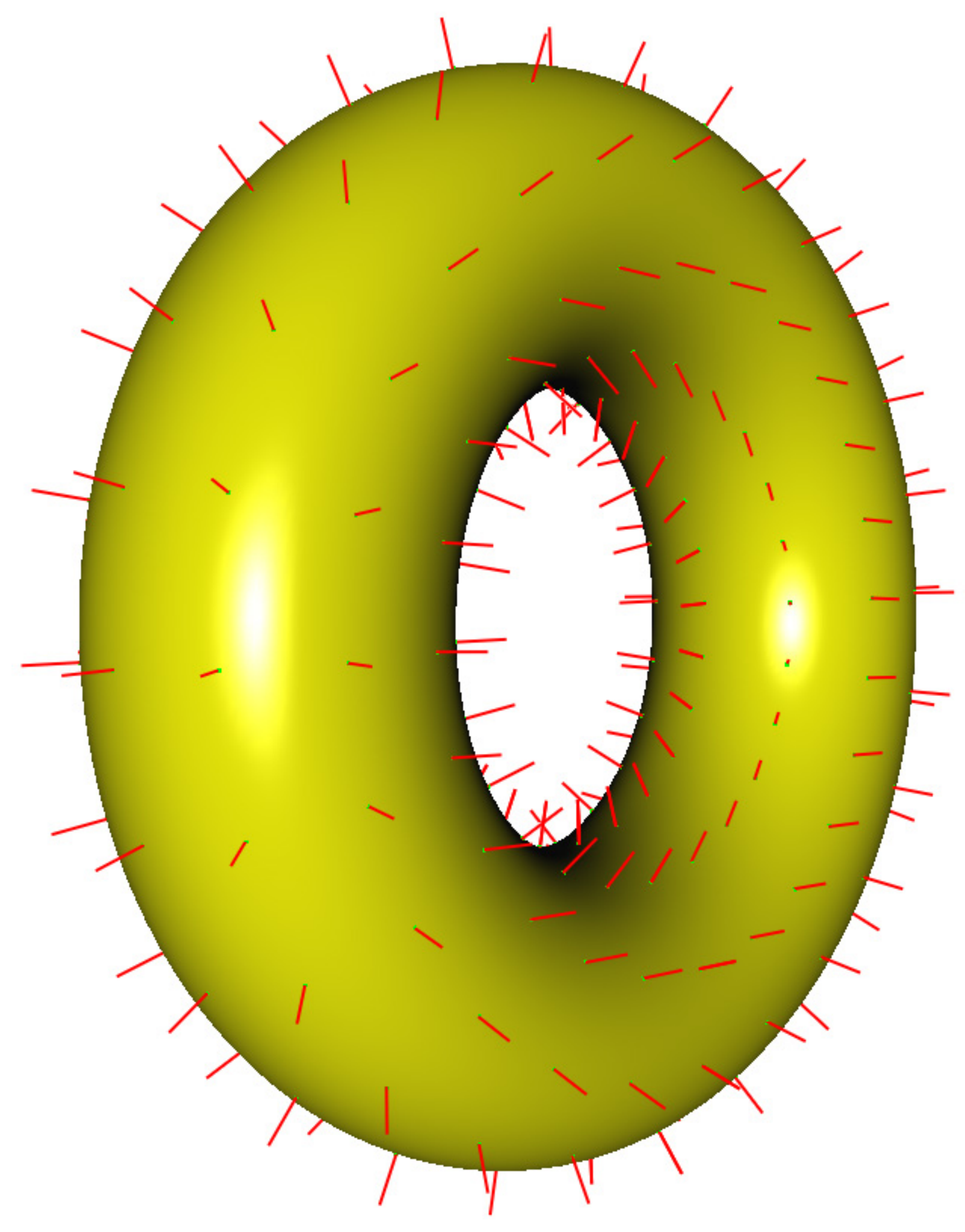}}
  \subfigure[]{\includegraphics[width=4.0cm]{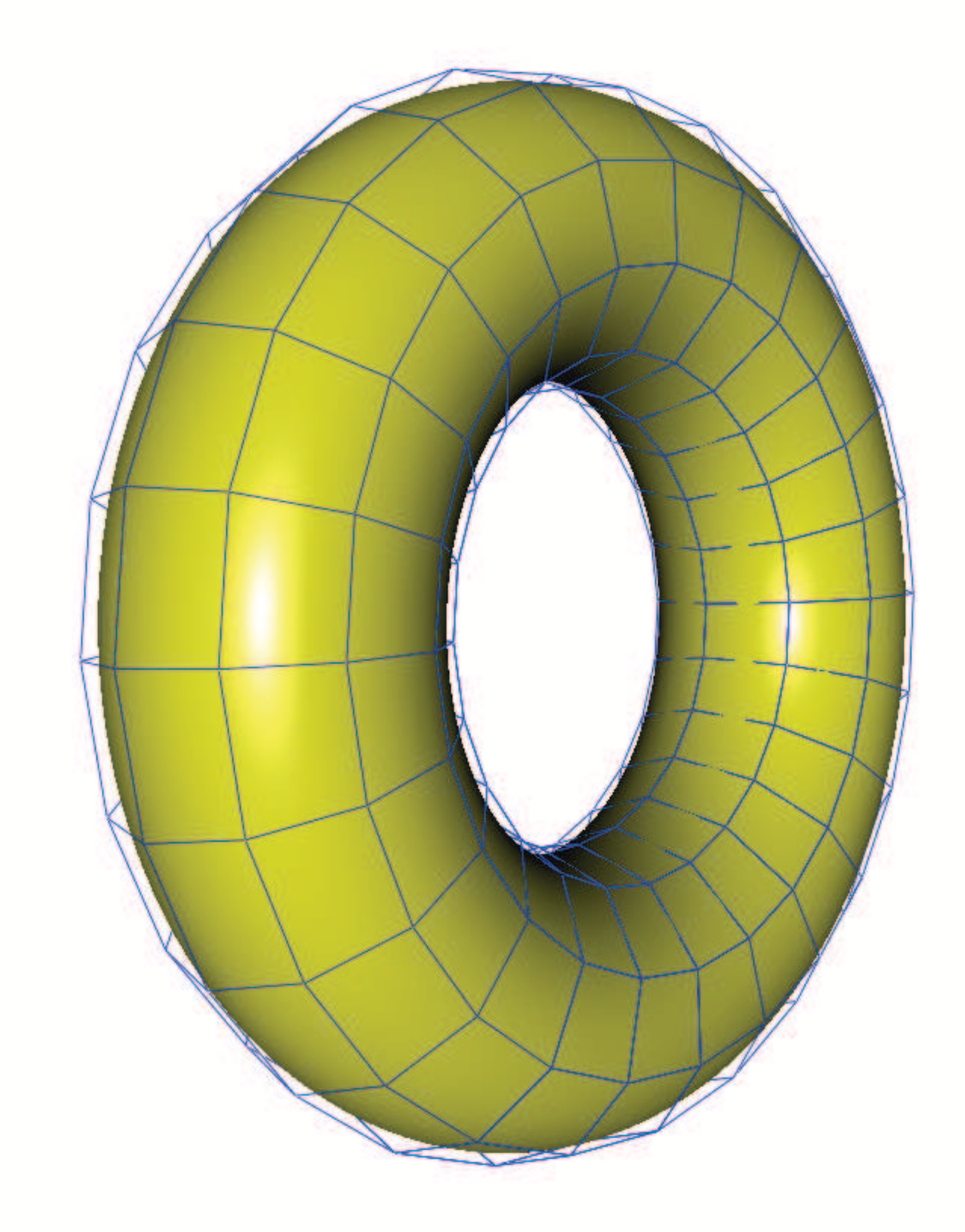}}
  \caption{(a) An H-MLS surface constructed from $10\times20$ point-normal pairs ($\mu_i=1.25$).
  (b) A bicubic B-spline surface with the same set of control points.}
  \label{Fig:MLS_torus}
\end{figure}

Assume the kernel function $\phi(\xi)$ is locally supported, i.e.,
$\phi(\xi)=0$ when $|\xi|>R$, evaluation of point at a parameter $\xi_i$ on
the curve or the surface can be simplified. Denote $N(i)=\{j|
|\xi_i-\xi_j|\leq R\}$. The point at $\xi=\xi_i$ is computed by
\begin{equation}\label{Eqn:LS-surfacePoint}
\hat{\mathbf{p}}_i=\mathbf{p}(\xi_i)=\left[\sum_{j\in
N(i)}\omega_{ij}M_j\right]^{-1}\sum_{j\in N(i)} \omega_{ij}M_j\mathbf{p}_j,
\end{equation}
where $\omega_{ij}=\phi(|\xi_i-\xi_j|)$. Equation
(\ref{Eqn:LS-surfacePoint}) also implies that $\hat{\mathbf{p}}_i$ is the
solution to a local least squares fitting with weights $\omega_{ij}$ and
balance parameters $\mu_j$, $j\in N(i)$.

By computing the parameters $\mu_j$ under the assumption that the curve $\mathbf{p}(\xi)$ has almost circular or helix precision to the original curve from which the points and normals are sampled, and because smooth curves can be approximated well by smoothly connected circular arcs or helix segments,
Equation (\ref{Eqn:LS-surfacePoint}) has been successfully applied for fairing a polygon or a rational spline curve in 2D pr 3D space~\citep{Yang2018:fairingMatNURBScurve}.
In this paper we generalize this technique for filtering surface meshes. However, at least three challenges have to be solved or
avoided. First, computing a global parametrization of a general surface mesh
is a time consuming step for mesh filtering. Second, the neighborhood of a
selected point $\mathbf{p}_i$ may not be symmetric and the naively evaluated
point may shift on or near the tangent plane that passes through the point.
Third, how to preserve salient or sharp features and how to control the
fidelity from the filtered mesh to the original one are more challenging.

\section{H-MLS Filter for Anisotropic Mesh Filtering}
\label{Section:point-normal filter}

In this section we generalize Equation (\ref{Eqn:LS-surfacePoint}) to filter
general types of triangular meshes. Though the weights $\omega_{ij}$ are
theoretically dependent on the parametrization of the vertices, we will
replace $|\xi_i-\xi_j|$ by $\|\mathbf{p}_i-\mathbf{p}_j\|$ or other similar
metrics when special properties like feature preserving are desired. In the
following we introduce necessary steps and implementation details for our
proposed mesh filter.

Before computing new positions for vertices, normal vectors at all mesh vertices are first
estimated. Let $N_T(i)$ denote the index set of triangles that share a vertex $\mathbf{p}_i$. The unit normal vector at the point $\mathbf{p}_i$ is computed by
\[
\mathbf{n}_i=\frac{\sum_{j\in N_T(i)} \alpha_{ij} \mathbf{n}_{f_j}}{\|\sum_{j\in N_T(i)} \alpha_{ij} \mathbf{n}_{f_j}\|},
\]
where $\mathbf{n}_{f_j}$ are the unit normals at the neighboring triangles and $\alpha_{ij}$ are the angles of the triangles at the vertex $\mathbf{p}_i$.

$\bullet$ \textbf{Compute the filtered vertex.}
We compute the filtered vertex by solving a homogeneous MLS fitting equation. Let $N(i)$ denote the index set of selected neighboring points of vertex
$\mathbf{p}_i$. We modify the functional~(\ref{Eqn:LS-surfaceObjective}) to
compute optimal new position for vertex $\mathbf{p}_i$ as follows
\begin{equation}\label{Eqn:LS-VertexObjective}
  F_i(\mathbf{p})=\frac{1}{2}\sum_{j\in N(i)}\omega_{ij}(\mathbf{p}-\mathbf{p}_j)^2
  + \frac{\mu_i}{2}\sum_{j\in N(i)}\omega_{ij}[(\mathbf{p}-\mathbf{p}_j)\cdot \mathbf{n}_j]^2
  + \frac{\gamma}{2}[(I-\mathbf{n}_i\mathbf{n}_i^T)(\mathbf{p}-\mathbf{p}_i^*)]^2.
\end{equation}
The first two terms on the right side of Equation
(\ref{Eqn:LS-VertexObjective}) represent the squared distances from a point
$\mathbf{p}$ to a set of neighboring points as well as the tangent
planes at the points. The third term means the squared distance from point
$\mathbf{p}$ to the line that passes through $\mathbf{p}_i^*$ with direction
$\mathbf{n}_i$. This term can help to keep the filtered vertex lying close to
the specified line even when the neighborhood is not symmetric. In this paper
we choose $\mathbf{p}_i^*=\mathbf{p}_i$ and $\gamma=1000$ for mesh filtering
by default. Figure~\ref{Fig:foot_denoise_lineconstraint} shows an example of
how line constraint can help to preserve the basic triangle shapes of original
mesh during filtering. Let $\mathbf{p}_i^{center}$ be the centroid of 1-ring
neighborhood of $\mathbf{p}_i$. We can also choose
$\mathbf{p}_i^*=\mathbf{p}_i^{center}$ for vertex filtering, which will lead
to a smooth mesh with more uniform triangulation in the end.

\begin{figure}[htb]
  \centering
  \includegraphics[width=3.0cm]{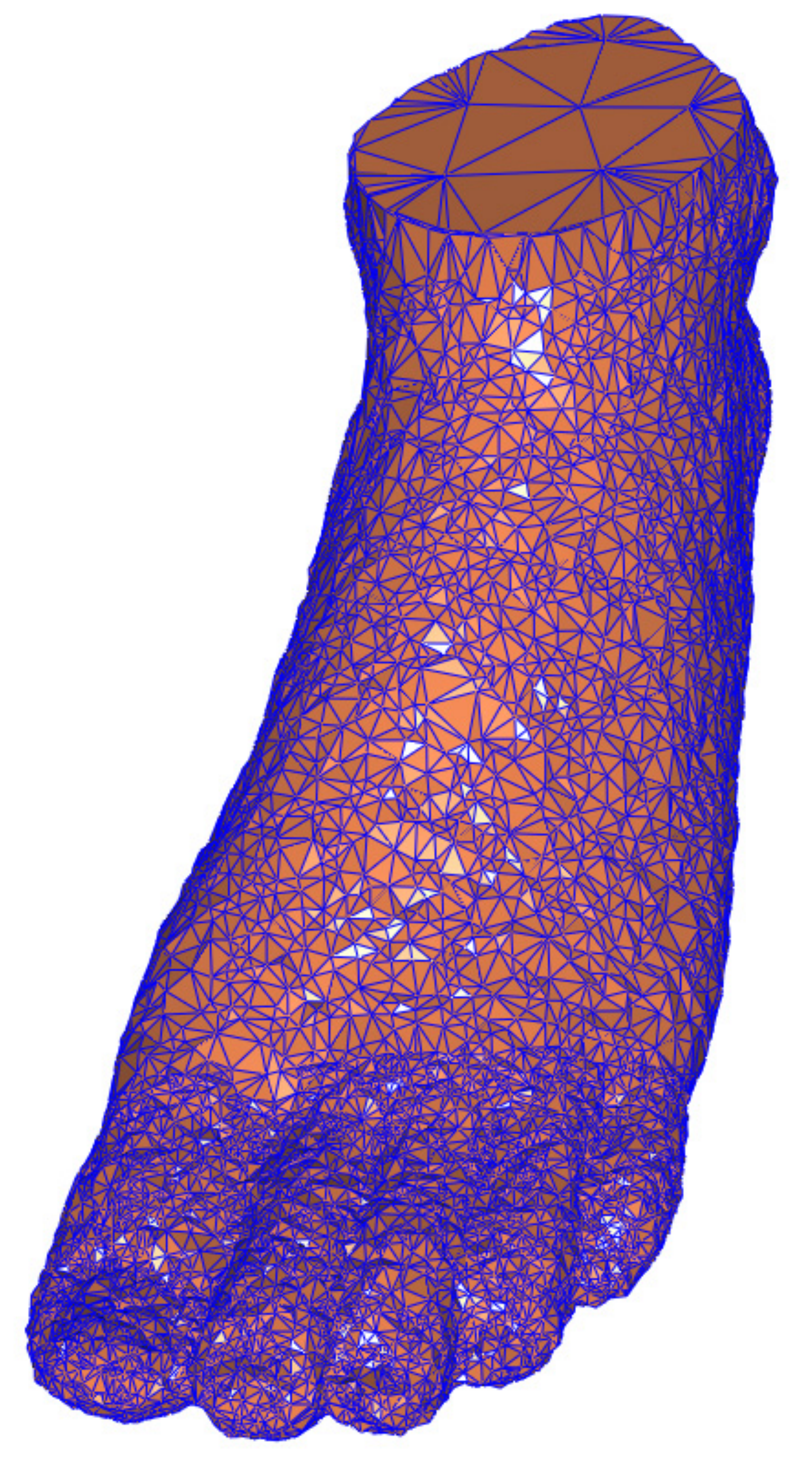}
  \includegraphics[width=3.0cm]{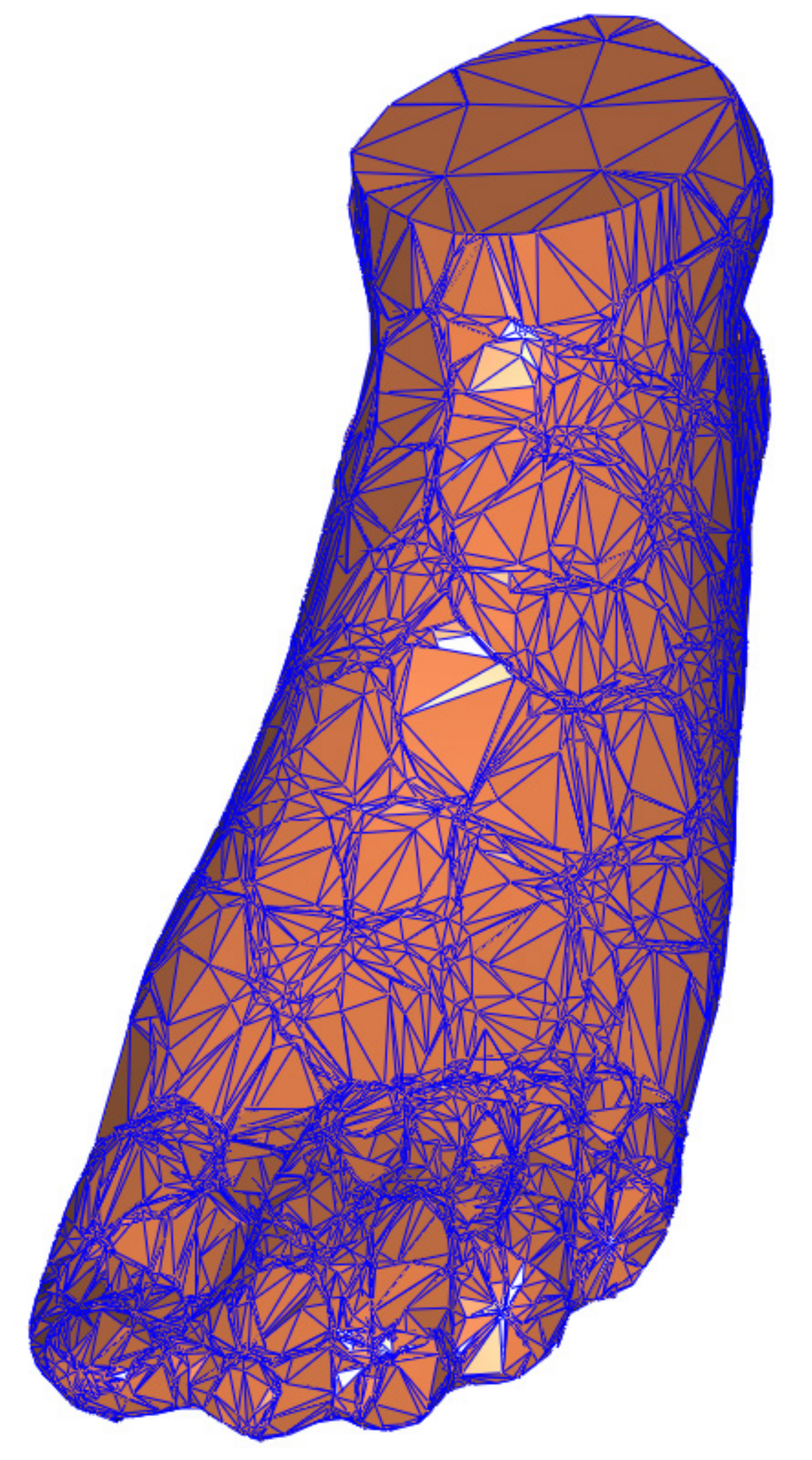}
  \includegraphics[width=3.0cm]{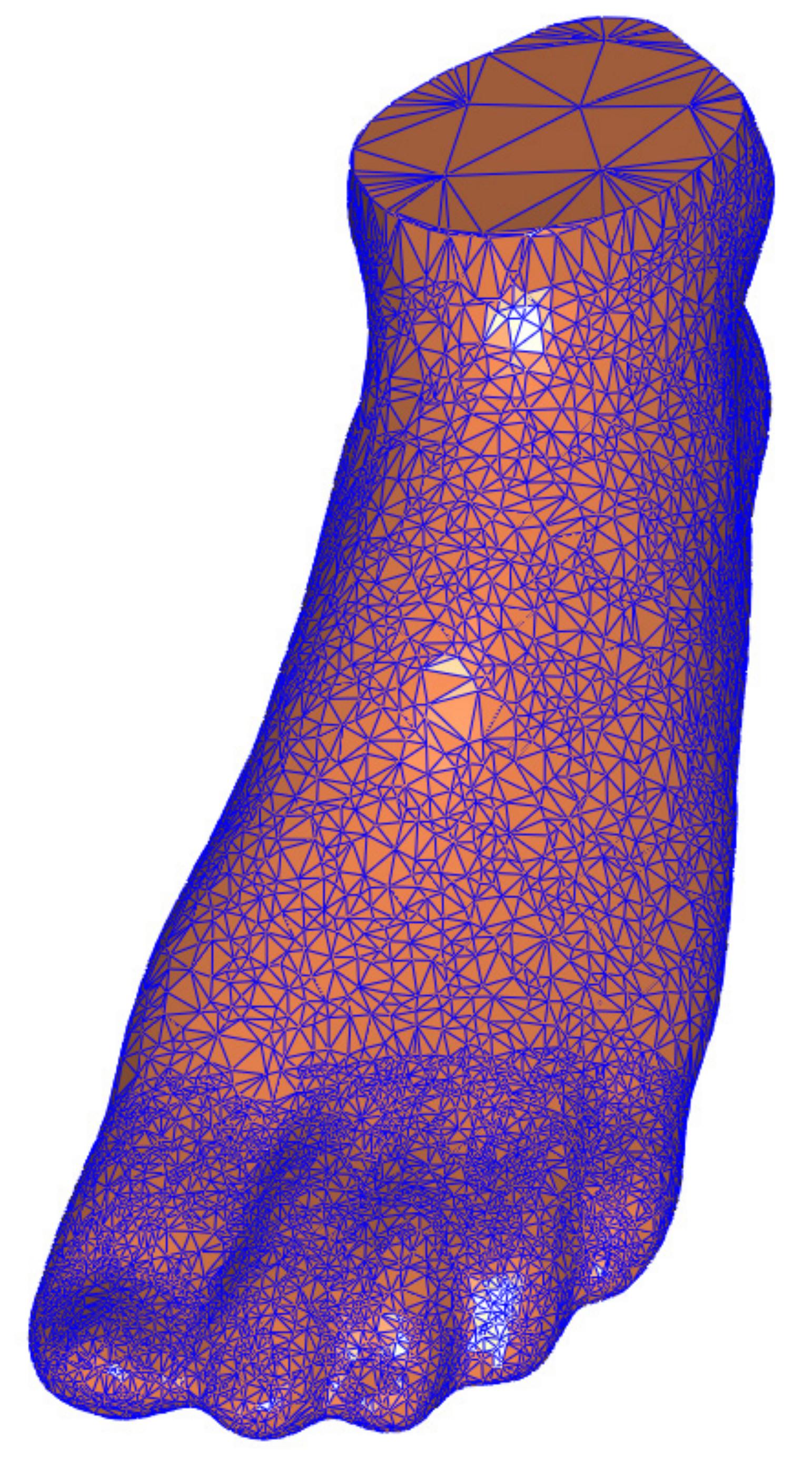}
  \caption{Left: the input noisy mesh; middle \& right: meshes filtered by 3 iterations of H-MLS filtering
  without or with constraint of lines through vertices. }
  \label{Fig:foot_denoise_lineconstraint}
\end{figure}

Under the assumption that all weights $\omega_{ij}$ and the parameter $\mu_i$ are
already known, the derivative of the functional $F_i(\mathbf{p})$ with
respect to the variable $\mathbf{p}$ is
\begin{equation}\label{Eqn:derivative of F(p)}
  \frac{\partial F_i(\mathbf{p})}{\partial \mathbf{p}}=\sum_{j\in N(i)}\omega_{ij}(\mathbf{p}-\mathbf{p}_j)
   + \mu_i\sum_{j\in N(i)}\omega_{ij}\mathbf{n}_j\mathbf{n}_j^T(\mathbf{p}-\mathbf{p}_j)
   + \gamma(I-\mathbf{n}_i\mathbf{n}_i^T)(\mathbf{p}-\mathbf{p}_i^*).
\end{equation}
Let $M_{ij}=\omega_{ij}(I+\mu_i\mathbf{n}_j\mathbf{n}_j^T)$,
$j\in N(i)$, and $M_i^*=\gamma(I-\mathbf{n}_i\mathbf{n}_i^T)$. The solution
to equation $\frac{\partial F_i(\mathbf{p})}{\partial \mathbf{p}}=0$ is
\begin{equation}\label{Eqn:LS-filteredPoint}
    \hat{\mathbf{p}}_i=\left[\sum_{j\in N(i)}M_{ij} +
    M_i^*\right]^{-1}\left(\sum_{j\in N(i)}M_{ij}\mathbf{p}_j +
    M_i^*\mathbf{p}_i^*\right).
\end{equation}
We assert that the inverse matrix within Equation (\ref{Eqn:LS-filteredPoint}) can be computed robustly by
choosing proper weights and parameters.
\begin{proposition} The matrix
$\sum_{j\in N(i)}M_{ij}+M_i^*$ within Equation (\ref{Eqn:LS-filteredPoint})
is reversible when the weights and parameters satisfy $\omega_{ij}\geq0$,
$j\in N(i)$ but $\sum_{j\in N(i)}\omega_{ij}^2>0$, $\mu_i>-1$ and
$\gamma\geq0$.
\end{proposition}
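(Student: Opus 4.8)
The plan is to show that $\sum_{j\in N(i)}M_{ij}+M_i^*$ is symmetric positive definite, from which invertibility is immediate. Note first that this matrix is exactly the (constant) Hessian of the quadratic functional $F_i$ in~(\ref{Eqn:LS-VertexObjective}), obtained by differentiating its gradient once more in $\mathbf{p}$; hence establishing positive definiteness simultaneously confirms the strict convexity of the fitting problem. Symmetry is clear at once, since each $M_{ij}=\omega_{ij}(I+\mu_i\mathbf{n}_j\mathbf{n}_j^T)$ and $M_i^*=\gamma(I-\mathbf{n}_i\mathbf{n}_i^T)$ is real symmetric, being a combination of the identity and the rank-one outer products $\mathbf{n}_j\mathbf{n}_j^T$. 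It therefore suffices to prove that the associated quadratic form is strictly positive on every nonzero vector.

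First I would fix an arbitrary $\mathbf{v}\neq\mathbf{0}$ and evaluate $\mathbf{v}^T\!\left(\sum_{j\in N(i)}M_{ij}+M_i^*\right)\!\mathbf{v}$ term by term. For the line-constraint term, $\mathbf{v}^TM_i^*\mathbf{v}=\gamma\big(\|\mathbf{v}\|^2-(\mathbf{v}\cdot\mathbf{n}_i)^2\big)\geq0$, because $I-\mathbf{n}_i\mathbf{n}_i^T$ is the orthogonal projection onto the plane perpendicular to the unit vector $\mathbf{n}_i$ and $\gamma\geq0$; this contribution is nonnegative and may simply be dropped from the lower bound. For each fitting term I would write $\mathbf{v}^TM_{ij}\mathbf{v}=\omega_{ij}\big(\|\mathbf{v}\|^2+\mu_i(\mathbf{v}\cdot\mathbf{n}_j)^2\big)$.

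The key observation is that, $\mathbf{n}_j$ being a unit vector, the scalar $t=(\mathbf{v}\cdot\mathbf{n}_j)^2$ ranges over $[0,\|\mathbf{v}\|^2]$, while $t\mapsto\|\mathbf{v}\|^2+\mu_i t$ is affine in $t$; its minimum over that interval is thus attained at an endpoint, giving $\|\mathbf{v}\|^2+\mu_i(\mathbf{v}\cdot\mathbf{n}_j)^2\geq\min(1,\,1+\mu_i)\,\|\mathbf{v}\|^2$. This one inequality covers both signs of $\mu_i$ at once, and is precisely where the hypothesis $\mu_i>-1$ enters: it forces $1+\mu_i>0$, so the constant $c=\min(1,\,1+\mu_i)$ is strictly positive. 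Summing over $j\in N(i)$ and reinstating the nonnegative constraint term then yields $\mathbf{v}^T\!\left(\sum_{j\in N(i)}M_{ij}+M_i^*\right)\!\mathbf{v}\geq c\,\|\mathbf{v}\|^2\sum_{j\in N(i)}\omega_{ij}$.

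To finish I would translate the weight hypothesis: since every $\omega_{ij}\geq0$ and $\sum_{j\in N(i)}\omega_{ij}^2>0$, at least one weight is strictly positive, so $\sum_{j\in N(i)}\omega_{ij}>0$. Together with $c>0$ and $\mathbf{v}\neq\mathbf{0}$, the lower bound above is then strictly positive, establishing positive definiteness and hence invertibility. I expect the only genuinely delicate point to be the sign handling of $\mu_i$: when $\mu_i<0$ the rank-one terms $\mu_i(\mathbf{v}\cdot\mathbf{n}_j)^2$ subtract from positivity, and one must check that even the worst case $\mathbf{v}\parallel\mathbf{n}_j$ leaves the coefficient $1+\mu_i$ positive. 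The affine-minimum argument above packages this cleanly and spares an explicit case split.
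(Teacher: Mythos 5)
Your proof is correct and follows essentially the same route as the paper: both establish that $\sum_{j\in N(i)}M_{ij}+M_i^*$ is symmetric positive definite by bounding the quadratic form $\mathbf{x}^T(\cdot)\mathbf{x}$ from below. The only difference is that the paper asserts the strict positivity of $\sum_{j}\omega_{ij}(\|\mathbf{x}\|^2+\mu_i|\mathbf{x}^T\mathbf{n}_j|^2)+\gamma(\|\mathbf{x}\|^2-|\mathbf{x}^T\mathbf{n}_i|^2)$ without elaboration, whereas you supply the missing details -- the Cauchy--Schwarz bound $(\mathbf{x}\cdot\mathbf{n}_j)^2\leq\|\mathbf{x}\|^2$, the lower bound $\min(1,1+\mu_i)>0$ from $\mu_i>-1$, and the deduction of $\sum_j\omega_{ij}>0$ from $\sum_j\omega_{ij}^2>0$ -- which is exactly where the hypotheses are used.
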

\begin{proof}
Notice that the matrix
$\sum_{j\in N(i)}M_{ij}+M_i^*$ is symmetric. To prove the matrix is
reversible, we should just prove that the matrix is positive definite. Assume
$\mathbf{x}$ is an arbitrary non-zero vector, a quadratic form is obtained as
\[
\begin{array}{l}
    \mathbf{x}^T\left[\sum_{j\in N(i)}M_{ij}+M_i^*\right]\mathbf{x}\\
    =\sum_{j\in N(i)}\omega_{ij}(\mathbf{x}^T\mathbf{x}+\mu_i\mathbf{x}^T\mathbf{n}_j\mathbf{n}_j^T\mathbf{x})
    +\gamma(\mathbf{x}^T\mathbf{x}-\mathbf{x}^T\mathbf{n}_i\mathbf{n}_i^T\mathbf{x})\\
    =\sum_{j\in N(i)}\omega_{ij}(\|\mathbf{x}\|^2+\mu_i|\mathbf{x}^T\mathbf{n}_j|^2)
    +\gamma(\|\mathbf{x}\|^2-|\mathbf{x}^T\mathbf{n}_i|^2)\\
    >0.
\end{array}
\]
Since the matrix $\sum_{j\in N(i)}M_{ij}+M_i^*$ is positive definite, it is
reversible. The proposition is proven.
\end{proof}
Though an inverse matrix has to be computed for filtering each vertex, the computational cost has
not been increased significantly as compared with state of art mesh denoising
methods. The inverse of a matrix of order 3 can be computed directly by
definition. As explained later, Equation (\ref{Eqn:LS-filteredPoint}) will be
used as an efficient filter for mesh filtering by computing proper weights
and parameters from the original data.

$\bullet$ \textbf{Compute the parameter $\mu_i$.} From Equations
(\ref{Eqn:LS-VertexObjective}) and (\ref{Eqn:LS-filteredPoint}) we know that
the parameter $\mu_i$ plays a key role for computing a high fidelity filtered
vertex. The mesh vertex may be over smoothed when $\mu_i$ has been chosen a
small value. A large value of the parameter may push the filtered vertex
toward the tangent planes of neighboring vertices. The parameter $\mu_i$
should be adaptive to local shape to prevent the filtered mesh from local
deformation but it should also be robust enough against data noise for mesh
filtering.

We propose to compute the value of $\mu_i$ based on the assumption that
$\hat{\mathbf{p}}_i=\mathbf{p}_i$ when the mesh is noise free, irrespective
of the point $\mathbf{p}_i$ lying on a smooth region or on a sharp feature.
Denote by $F'_i(\mathbf{p}_i)$ the derivative of the functional $F_i(\mathbf{p})$
at $\mathbf{p}=\mathbf{p}_i$. The scalar product between $\mathbf{n}_i$ and
$F'(\mathbf{p}_i)$ is obtained as
\[
    \mathbf{n}_i^T F'_i(\mathbf{p}_i)
    =\sum_{j\in N(i)}\omega_{ij}\mathbf{n}_i^T(\mathbf{p}_i-\mathbf{p}_j)
    +\mu_i\sum_{j\in N(i)}\omega_{ij}\mathbf{n}_i^T\mathbf{n}_j\mathbf{n}_j^T(\mathbf{p}_i-\mathbf{p}_j).
\]
From equation $\mathbf{n}_i^T F'_i(\mathbf{p}_i)=0$, we have
\begin{equation}\label{Eqn:mu_i_noisy}
    \mu_i=\frac{\sum_{j\in N(i)}\omega_{ij}\mathbf{n}_i^T(\mathbf{p}_i-\mathbf{p}_j)}{\sum_{j\in N(i)}\omega_{ij}\mathbf{n}_i^T\mathbf{n}_j\mathbf{n}_j^T(\mathbf{p}_j-\mathbf{p}_i)}.
\end{equation}
Though the parameter $\mu_i$ computed by above equation can be
used to recover vertices on a noise-free mesh, the values of $\mu_i$ may vary
much from vertex to vertex when the mesh data are noisy. A noise sensitive
parameter cannot be used for mesh filtering.

To compute a robust value for the parameter $\mu_i$, every term in Equation
(\ref{Eqn:mu_i_noisy}) should be computed in a robust way. For a surface mesh
the normal curvature at vertex $\mathbf{p}_i$ along an edge
$\mathbf{p}_i\mathbf{p}_j$ can be estimated by an osculating arc that
interpolates the end points and normal vector $\mathbf{n}_i$ at
$\mathbf{p}_i$, i.e.
$k_{ij}^n=\frac{2\mathbf{n}_i^T(\mathbf{p}_i-\mathbf{p}_j)}{\|\mathbf{p}_i-\mathbf{p}_j\|^2}$;
see, for example~\citep{Meyer03}. It yields that
$\mathbf{n}_i^T(\mathbf{p}_i-\mathbf{p}_j)=\frac{1}{2}k_{ij}^n\|\mathbf{p}_i-\mathbf{p}_j\|^2$.
Similarly, we have
$\mathbf{n}_j^T(\mathbf{p}_j-\mathbf{p}_i)=\frac{1}{2}k_{ji}^n\|\mathbf{p}_j-\mathbf{p}_i\|^2$.
If $\mathbf{p}_i$ and $\mathbf{p}_j$ lie closely on a curvature continuous
surface, the normal curvatures at the two points satisfy $k_{ij}^n\approx
k_{ji}^n$. It follows that $\mathbf{n}_i^T(\mathbf{p}_i-\mathbf{p}_j)\approx
\mathbf{n}_j^T(\mathbf{p}_j-\mathbf{p}_i)$. For a mesh vertex lying on a
non-convex region the normal curvatures to neighboring vertices may have
different signs. Even if the mesh is free of noise, the algebraic sum of the
denominator within Equation (\ref{Eqn:mu_i_noisy}) may still be close to zero
or vanish. To keep the denominator from being a small value or zero and to
compute the parameter against data noise, we replace both the terms
$\mathbf{n}_i^T(\mathbf{p}_i-\mathbf{p}_j)$ and
$\mathbf{n}_j^T(\mathbf{p}_j-\mathbf{p}_i)$ within Equation
(\ref{Eqn:mu_i_noisy}) by
\begin{equation}\label{Eqn:d_ij}
    d_{ij}=\max\{\frac{1}{2}(|\mathbf{n}_i^T(\mathbf{p}_i-\mathbf{p}_j)|+|\mathbf{n}_j^T(\mathbf{p}_j-\mathbf{p}_i)|),\eta\},
\end{equation}
where $\eta>0$ is a small given number. In our experiments we
choose $\eta=0.001l_e$, where $l_e$ is the average edge length of the mesh.
By the same way, we replace the term $\mathbf{n}_i^T\mathbf{n}_j$ within
Equation (\ref{Eqn:mu_i_noisy}) by
$c_{ij}=\max\{\mathbf{n}_i^T\mathbf{n}_j,0.001\}$ which also keeps the
denominator positive. Now, the parameter $\mu_i$ can be computed robustly as
\begin{equation}\label{Eqn:mu_i_robust}
    \mu_i=\frac{\sum_{j\in N(i)}\omega_{ij}d_{ij}}{\sum_{j\in N(i)}\omega_{ij}c_{ij}d_{ij}}.
\end{equation}

The parameter computed by Equation (\ref{Eqn:mu_i_robust}) is definitely
positive and it is equal to the value computed by Equation
(\ref{Eqn:mu_i_noisy}) when every pair of normal curvatures $k_{ij}^n$ and
$k_{ji}^n$ are equal with each other. Thus, a mesh vertex can be recovered
exactly by Equation (\ref{Eqn:LS-filteredPoint}) when its neighboring
vertices and normals are uniformly sampled from a sphere or cylinder. This
property is important and a filtered surface mesh almost suffers no shrinkage
or deformation even without any position constraint.

$\bullet$ \textbf{Choose kernels and compute weights.} As our proposed mesh
filtering scheme can be considered a discrete analogy of kernel based MLS
surface construction from an arbitrary topology triangular mesh that may have
noisy vertices and noisy normals, the smoothness of the filtered mesh depends
heavily on the properties of the kernel function. In this
paper we choose Gaussian function $\phi(\xi)=\exp(-\frac{\xi^2}{2\sigma^2})$
as the basic kernel function.

Based on the chosen kernel function, there are two ways to compute the
weights for vertex filtering.
(1) \emph{Weights for isotropic filtering}. As
discussed in Section~\ref{Section:MLS surface fitting} and the start of this
section, a direct way to compute the weights is
$\omega_{ij}=\exp(-\frac{\|\mathbf{p}_i-\mathbf{p}_j\|^2}{2\sigma_r^2})$,
where $\sigma_r$ is a user specified number. This choice of weights is useful
to remove noise, but sharp features of the mesh are also smoothed out.
(2) \emph{Weights for anisotropic filtering}. To preserve salient or sharp
features during filtering, we can choose the weights
$\omega_{ij}=\exp(-\frac{d_{ij}^2}{2\sigma_s^2})$, where $d_{ij}$ are
computed by Equation (\ref{Eqn:d_ij}) and $\sigma_s$ is another parameter
specified by users. From its definition we know that
$d_{ij}=\frac{1}{4}(|k_{ij}^n|+|k_{ji}^n|)\|\mathbf{p}_i-\mathbf{p}_j\|^2$.
By this choice of weights, a mesh is smoothed much more along a feature
direction (usually with absolute minor curvature) than across a feature
direction. As a result, salient or even sharp features can be preserved well
after vertex filtering. In this paper we use the second choice of weights for
anisotropic mesh filtering.

\begin{figure}[htb]
  \centering
  \includegraphics[width=3.0cm]{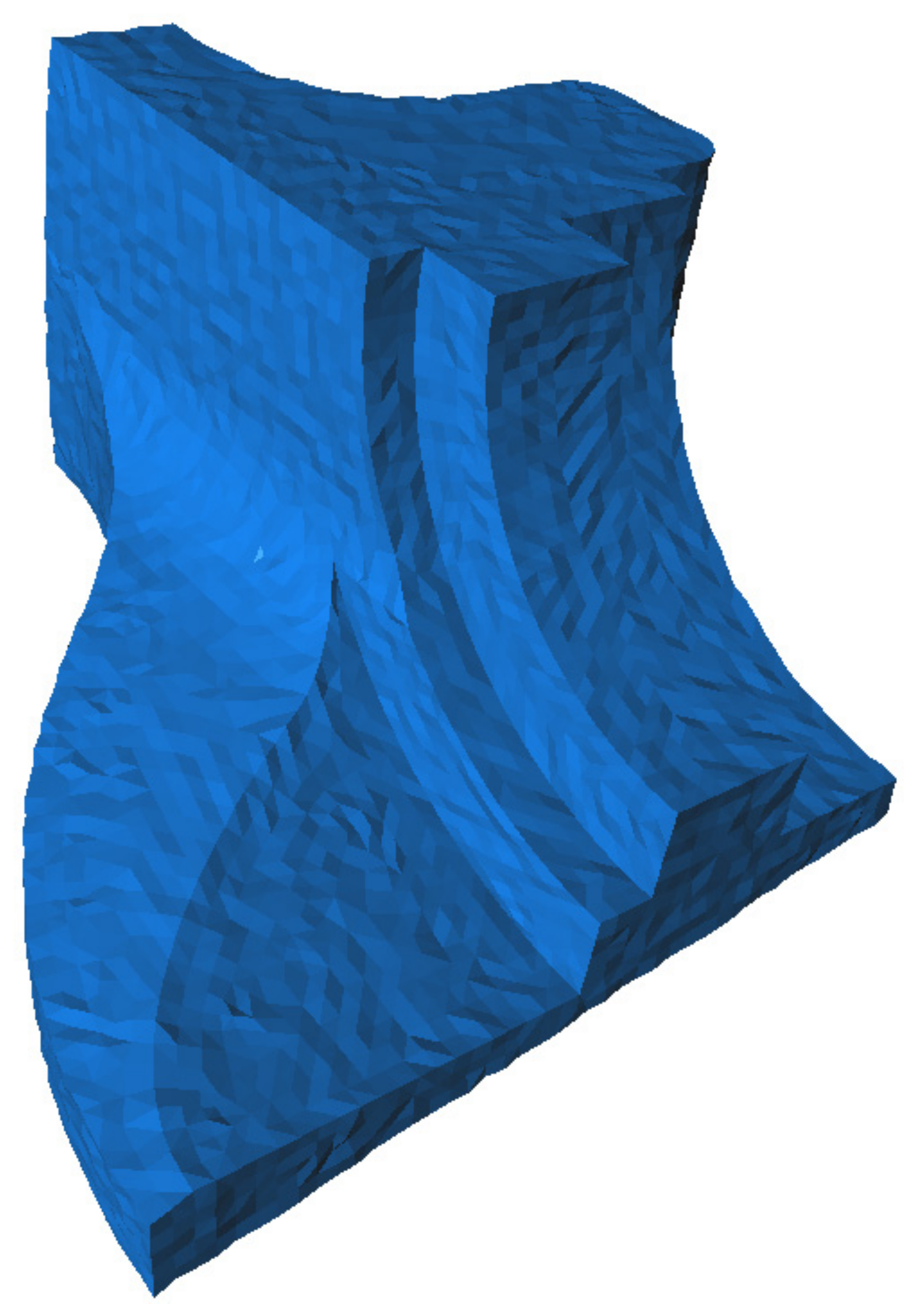}
  \includegraphics[width=3.0cm]{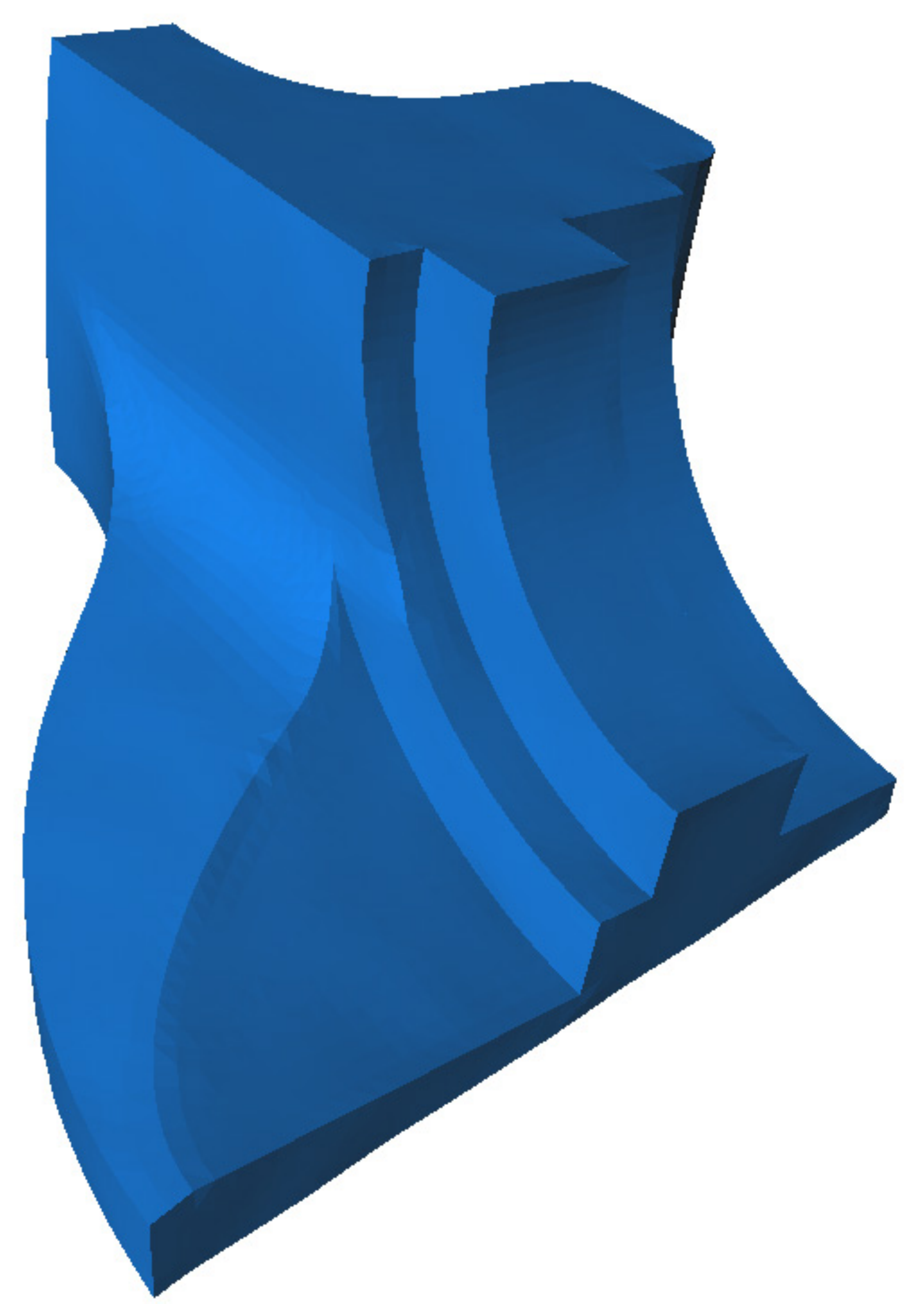}
  \includegraphics[width=3.0cm]{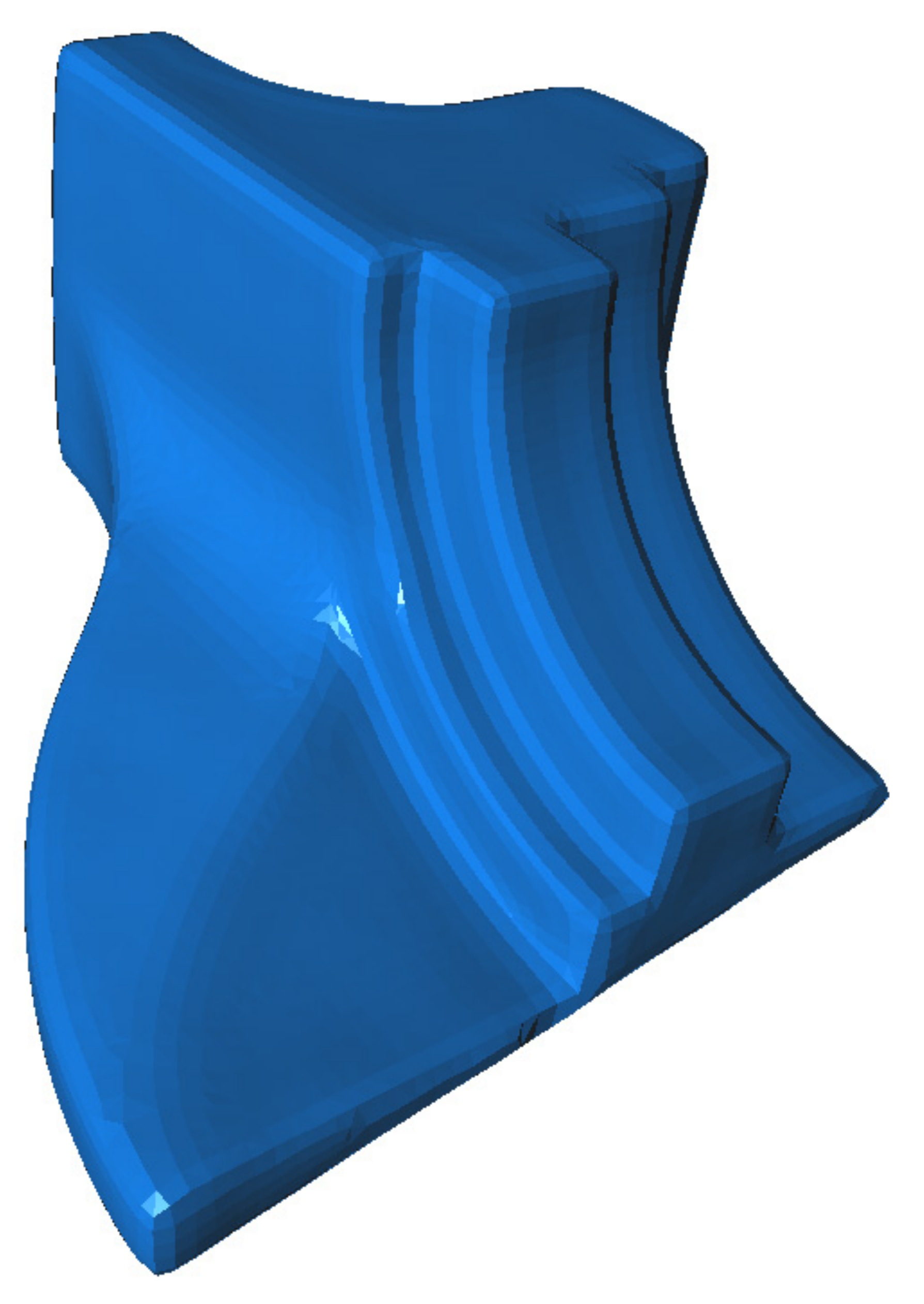}
  \caption{Left: noisy mesh with uniformly distributed noise in normal directions (max. deviation $\pm10\% l_e$);
  middle: 5 iterations by the proposed filter with $\sigma_s=0.05l_e$; right: 5 iterations by the proposed filter with $\sigma_s=0.2l_e$. }
  \label{Fig:Fandisk_denoise_round}
\end{figure}

The two parameters $\sigma_r$ and $\sigma_s$ can be chosen depending on the
vertex density and noise magnitudes of a noisy mesh. When $\sigma_r$ has been
specified, at most $m$ nearest points within a sphere centered at a current
vertex with radius $R=2\sigma_r$ are chosen as the neighborhood of the
vertex. The number $m$ is used to balance the computational costs and
filtering effects. In our experiments we choose $m=100$ by default for high
quality filtering effects and reasonable computational costs. As to the
choice of the parameter $\sigma_s$, a half of the maximum noise magnitude can
be used for feature preserving mesh filtering very well. Mesh noise may not
be filtered when $\sigma_s$ has been set too small a value, but sharp feature
edges or sharp corners will be rounded when $\sigma_s$ has been chosen a
large one. Figure~\ref{Fig:Fandisk_denoise_round} illustrates an example of
mesh filtering with sharp edge preserving or edge and corner rounding by
choosing different values for the parameter $\sigma_s$. The capability of
edge and corner rounding can be used to construct smooth surfaces from rough
initial meshes.

\begin{algorithm}[htb]
\SetAlgoNoLine
\KwIn{Vertex $\mathbf{p}_i$ and its neighborhood $\{(\mathbf{p}_j,\mathbf{n}_j)\}_{j\in N(i)}$. }
\KwOut{New vertex position $\hat{\mathbf{p}}_i$.}
{//} sub\_routine: Compute\_weights\_and\_parameters\;
$s_a=0$; $s_b=0$ \;
\For{each neighbor vertex $\mathbf{p}_j$}
    {
    $c_{ij}=\max\{\mathbf{n}_i^T\mathbf{n}_j,0.001\}$\;
    $d_{ij}=\max\{\frac{1}{2}(|\mathbf{n}_i^T(\mathbf{p}_i-\mathbf{p}_j)|+|\mathbf{n}_j^T(\mathbf{p}_j-\mathbf{p}_i)|),\eta\}$\;
    $\omega_{ij}=\exp(-\frac{d_{ij}^2}{2\sigma_s^2})$\;
    $s_a+=\omega_{ij}d_{ij}$\;
    $s_b+=\omega_{ij}c_{ij}d_{ij}$\;
    }
    $\mu_i=s_a/s_b$\;
{//} main\_routine: Compute\_the\_filtered\_vertex\;
$M_{sum}=0$; $\mathbf{p}_{sum}=0$\;
\For{each neighbor vertex $\mathbf{p}_j$}
    {
    $M_{ij}=\omega_{ij}(I+\mu_i\mathbf{n}_j\mathbf{n}_j^T)$\;
    $M_{sum}+=M_{ij}$\;
    $\mathbf{p}_{sum}+=M_{ij}\mathbf{p}_j$\;
    }
    $M_i^*=\gamma(I-\mathbf{n}_i\mathbf{n}_i^T)$\;
    $M_{sum}+=M_i^*$\;
    $\mathbf{p}_{sum}+=M_i^*\mathbf{p}_i^*$\;
    return $\hat{\mathbf{p}}_i=M_{sum}^{-1}\mathbf{p}_{sum}$\;
\caption{H-MLS Filter For Vertex Filtering} \label{alg:one}
\end{algorithm}

So far we have presented all necessary steps for our proposed filter that
filter one mesh vertex. The algorithm steps are summarized in
Algorithm~\ref{alg:one}. When all vertices of a mesh have been repositioned
by the proposed algorithm, normals at mesh vertices are recomputed for next
iteration of vertex filtering. Usually, a few iterations can lead to
satisfying results.

\section{Results}
\label{Section:results}

Our proposed mesh filtering algorithm was implemented using C++ on a double
2.90Ghz Intel(R) Core(TM) CPU with 8G of RAM. We have applied the filter for
filtering meshes with several different types of noise. Besides meshes with
ordinary triangles, the original mesh can also have highly irregular
triangulation or piecewise flat patches. The parameters and time costs for
the experimental results are summarized in Table~\ref{Table:one}.

\begin{table}
\caption{Parameters and time costs for the examples by the proposed algorithm.}
\label{Table:one}
\begin{minipage}{\columnwidth}
\begin{center} \begin{tabular}{lllllll}
  \toprule
  Model &  Figure & \#Vertex  & $\frac{R}{l_e}$  &  $\frac{\sigma_s}{l_e}$  & \#iter. & Time \\
  \bottomrule
  Dog   &  1(c) & 208k  &   3.6  &  0.4  &  5  & 56s \\
  Bunny &  5    & 35k   &   2    &  0.25 &  10 & 9s  \\
  Face  &  6(e) & 41k   &   2    &  0.08 &  4  & 8s  \\
  Retina&  7(c) & 140k  &   3.6  &  0.6  &  5  & 36s \\
  Wrench&  8(f) & 6k    &   2    &  0.08 &  5  & 2s  \\
  Body  &  9(b)& 45k   &   3.6  &  0.5  &  5  & 12s \\
  \bottomrule
\end{tabular}
\end{center}
\footnotesize\emph{Note:} The
times do not include the time to load meshes or to compute curvatures.
\end{minipage} \end{table}%

\begin{figure*}[htb]
  \centering
  \subfigure[]{\includegraphics[width=3.1cm]{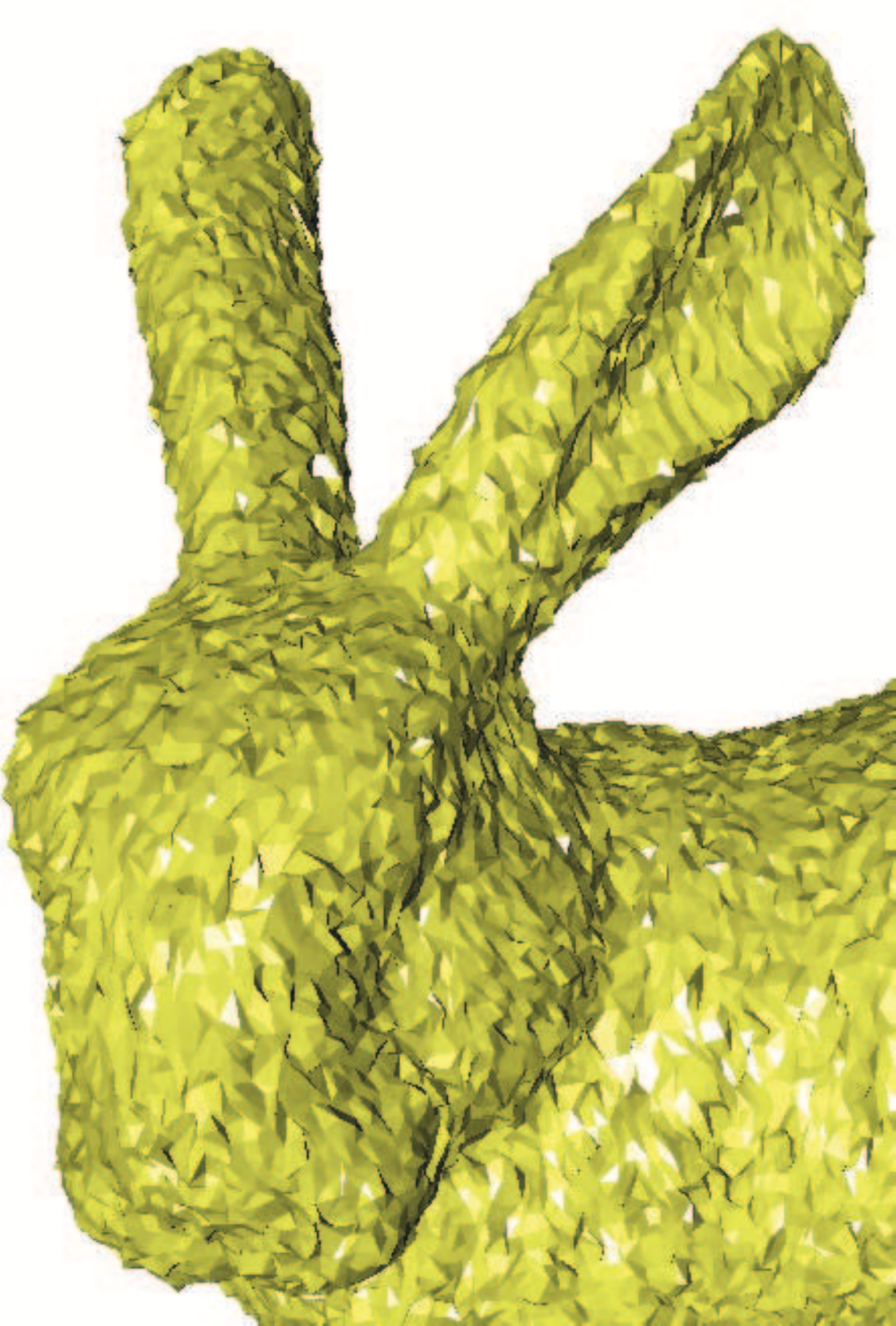}}
  \subfigure[]{\includegraphics[width=3.1cm]{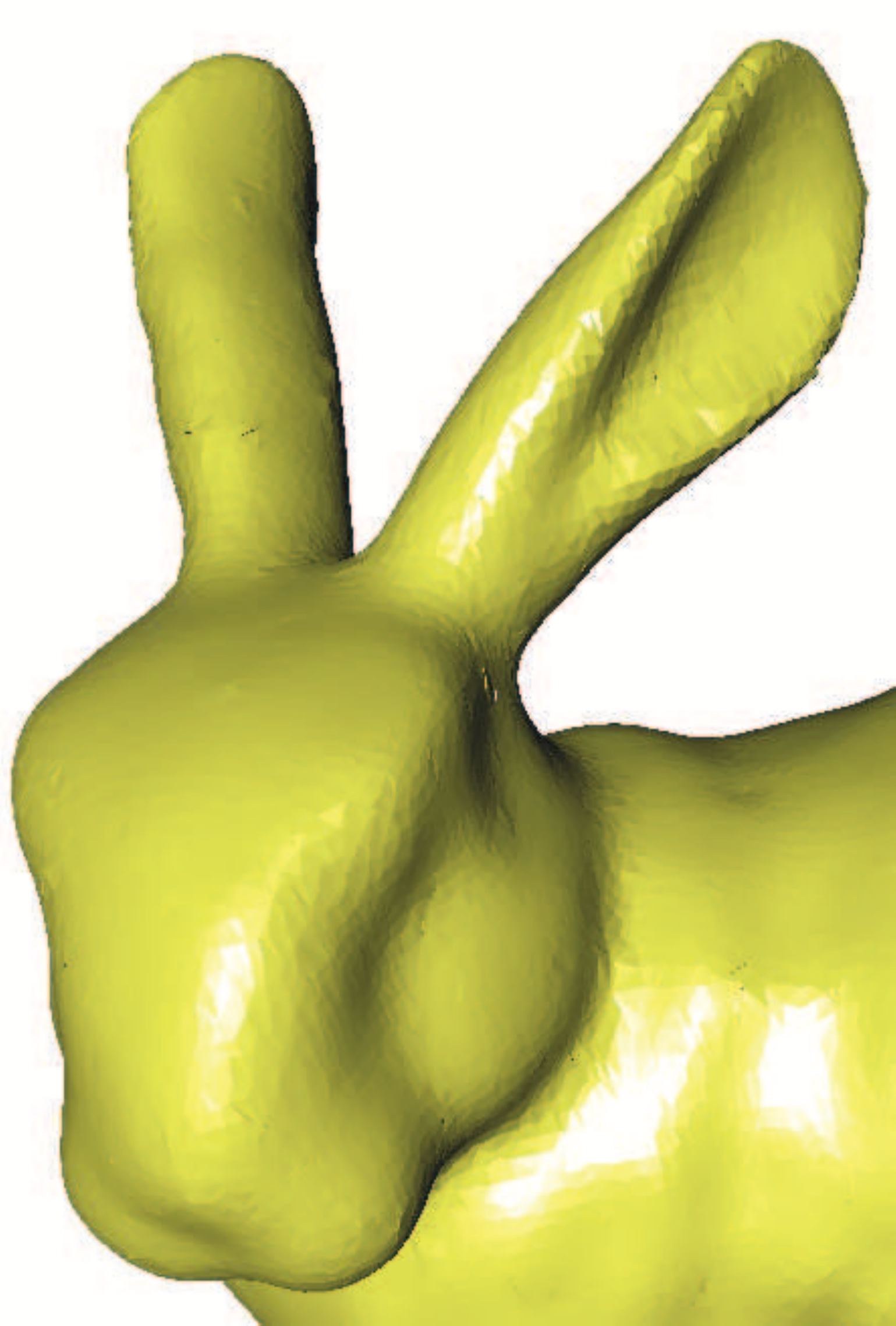}}
  \subfigure[]{\includegraphics[width=3.1cm]{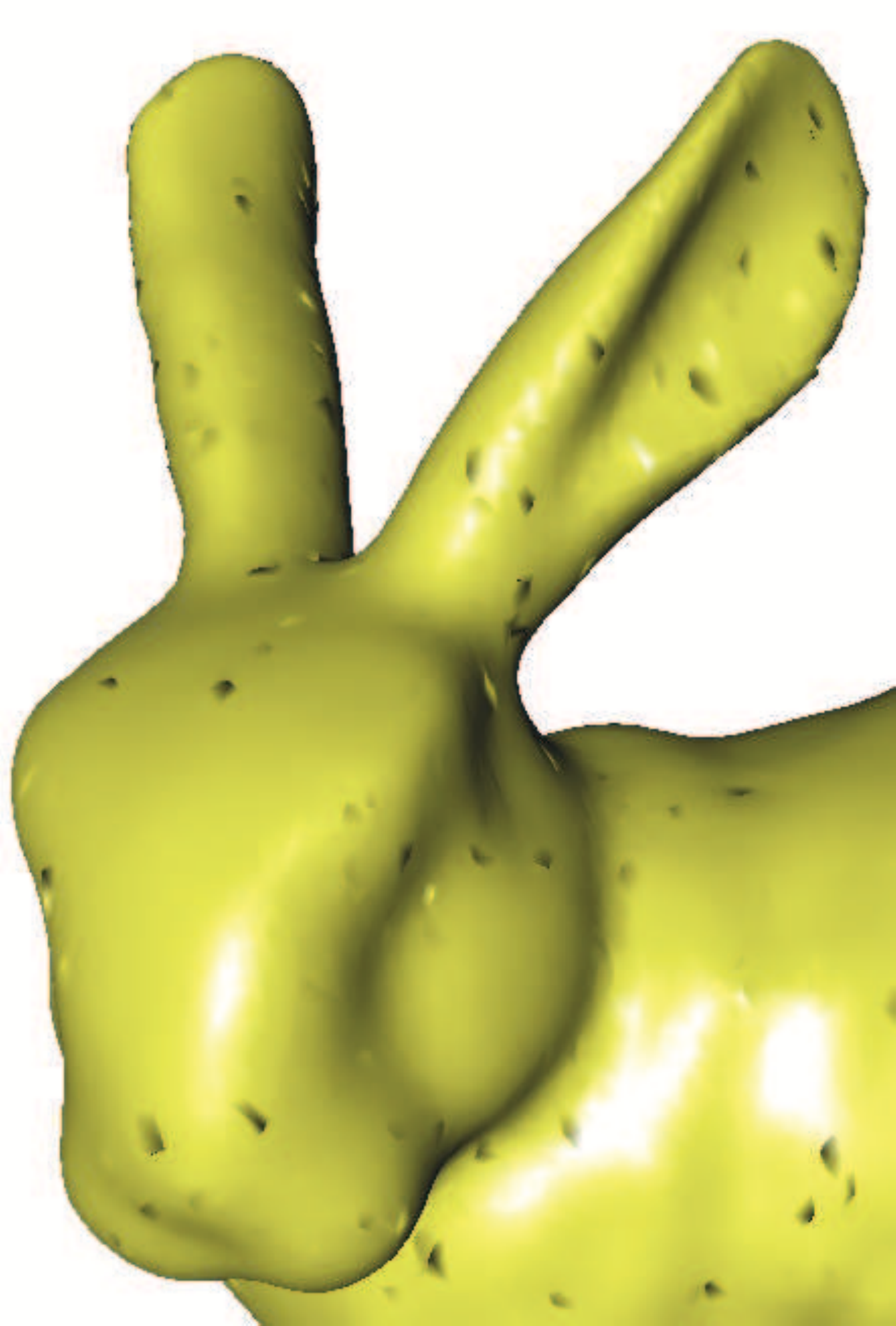}}
  \subfigure[]{\includegraphics[width=3.1cm]{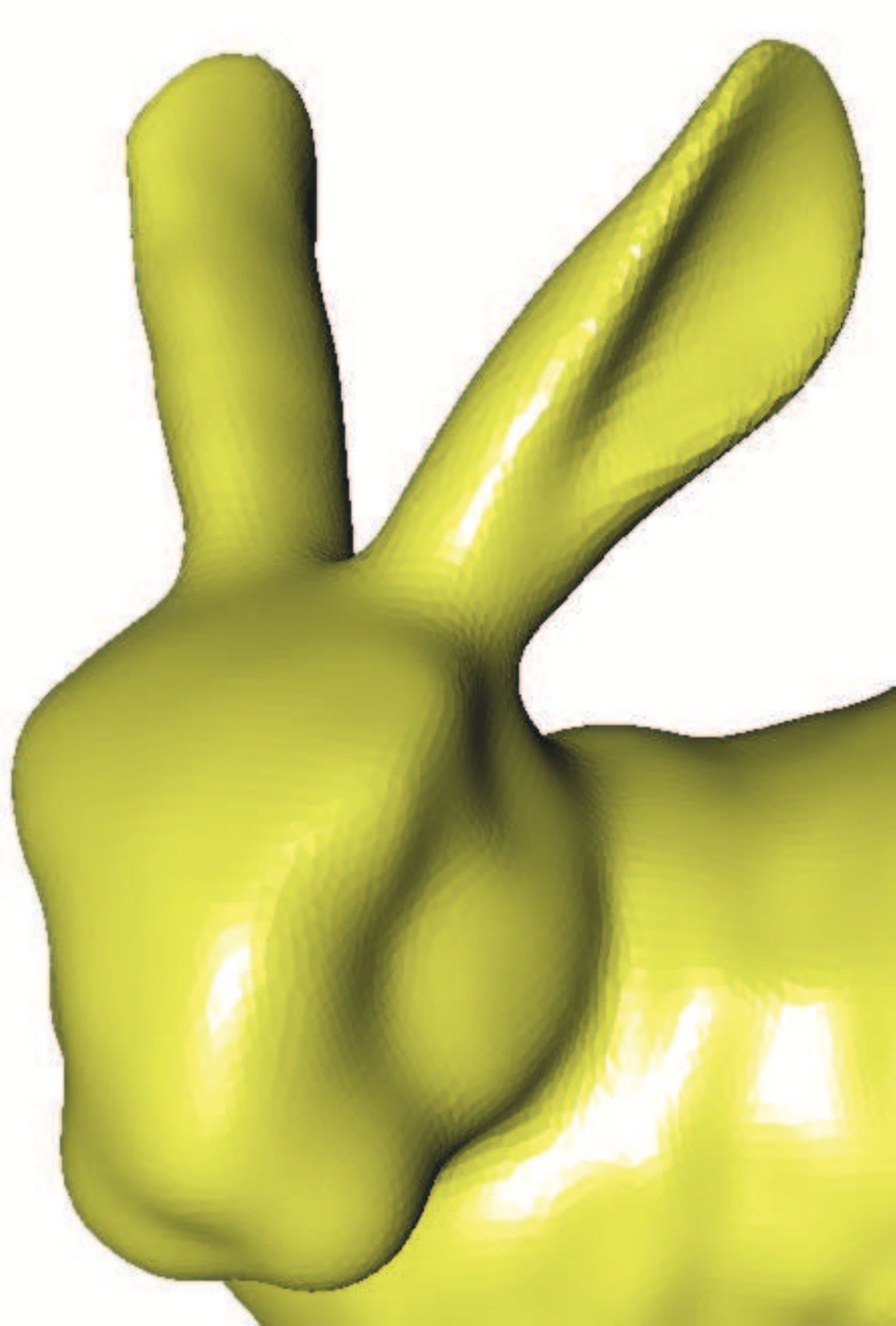}}
  \subfigure[]{\includegraphics[width=3.1cm]{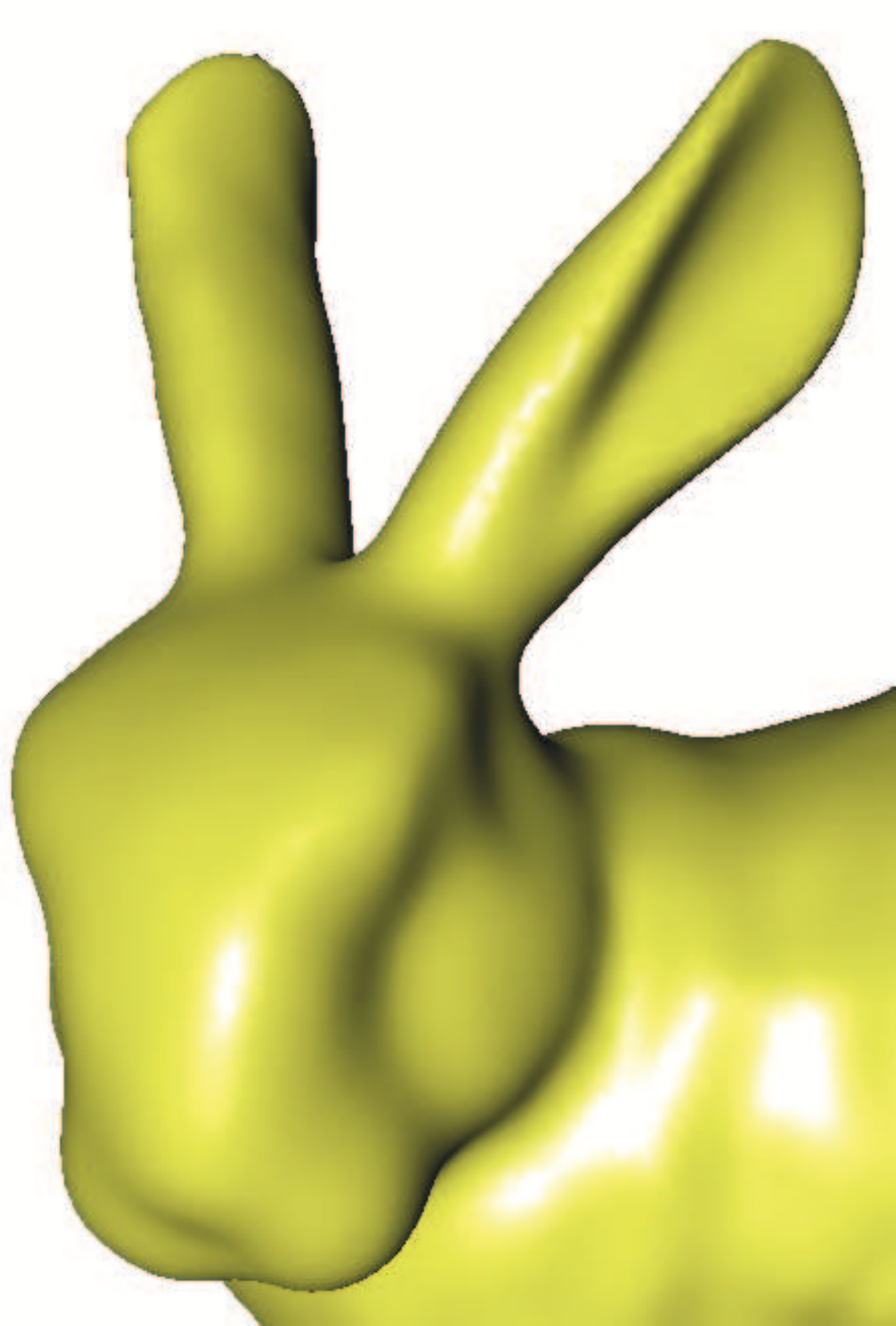}}
  \caption{(a) The mesh corrupted by Gaussian noise in randomly chosen directions ($\sigma=0.5l_e$);
  (b)\&(c) H-MLS($\mathbf{p}_i^*=\mathbf{p}_i$); (d)\&(e)H-MLS($\mathbf{p}_i^*=\mathbf{p}_i^{center}$).
  The filtered meshes in (c) and (e) are Phong shaded to check the flipped edges or folded triangles.}
  \label{Fig:bunny_filtering}
\end{figure*}

Figure~\ref{Fig:bunny_filtering}(a) illustrates a bunny model corrupted with
Gaussian noise in randomly chosen directions. Consequently, the corrupted
mesh has many folded triangles. When the noisy mesh is filtered by our proposed
H-MLS filter under the constraint of lines passing through noisy vertices,
the mesh has been made smooth but folded triangles still exist; see
Figures~\ref{Fig:bunny_filtering}(b)\&(c). If we filter the noisy mesh under the
constraint of lines passing through 1-ring centroids, the obtained surface is
visually smooth and suffers no triangle folding any more; see
Figures~\ref{Fig:bunny_filtering}(d)\&(e) for the result. In the following we
filter vertices under the constraint of lines passing through noisy vertices
or 1-ring centroids according to the criterion that the triangle shapes are to be preserved or more
smooth results are desired.

\begin{figure*}[htb]
  \centering
  \includegraphics[width=3.1cm]{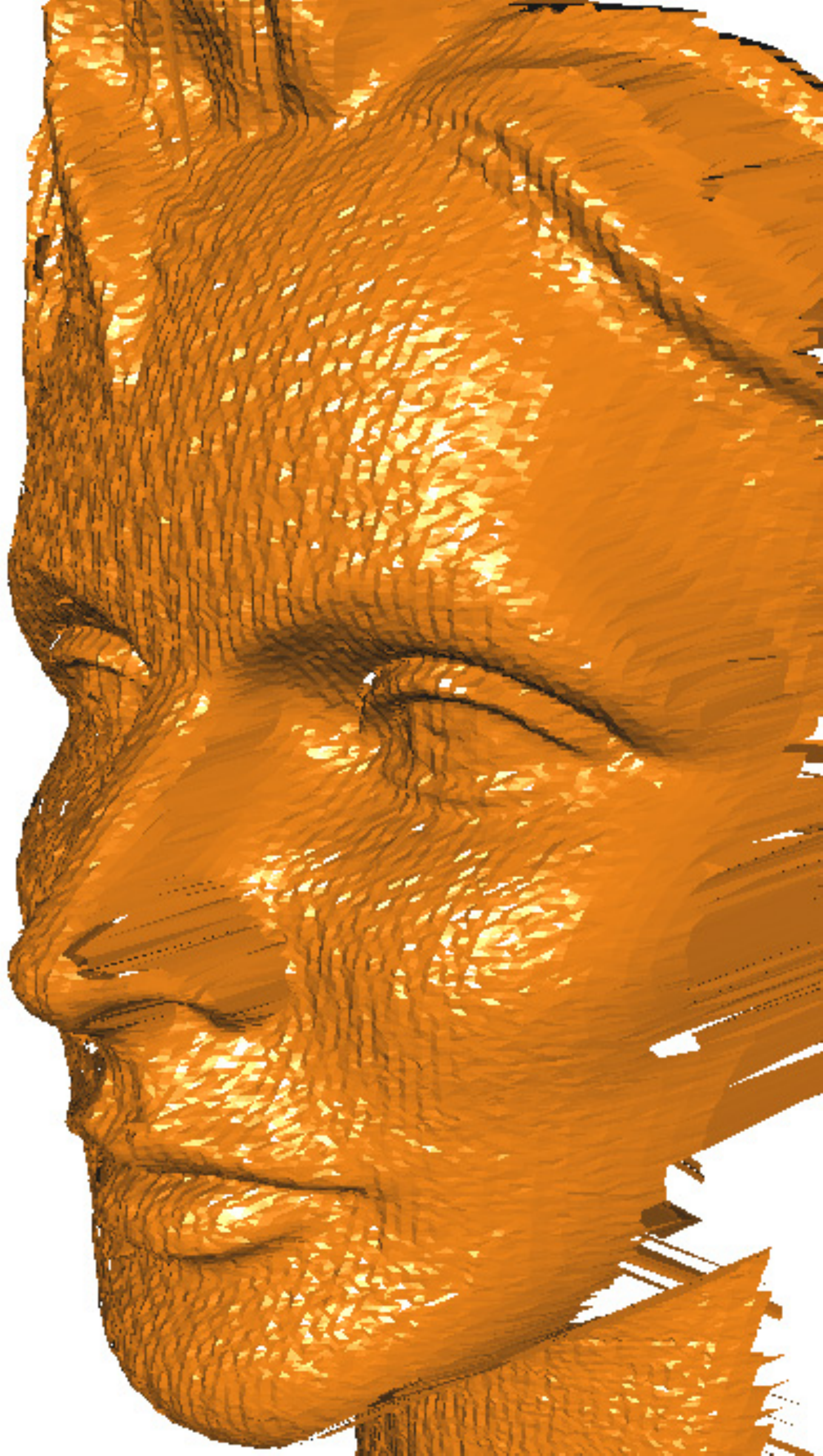}   \hskip 0.cm
  \includegraphics[width=3.1cm]{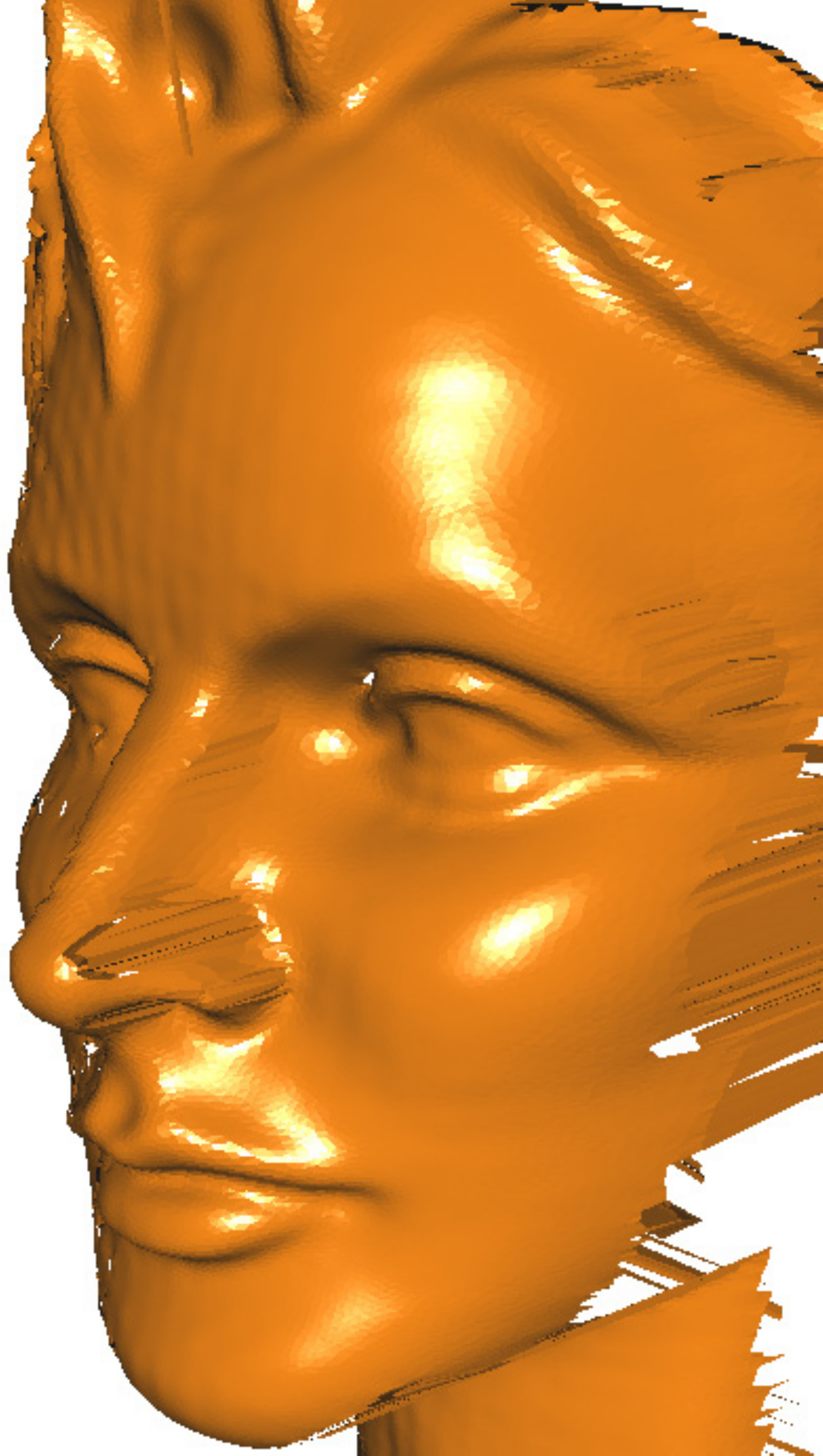}   \hskip 0.cm
  \includegraphics[width=3.1cm]{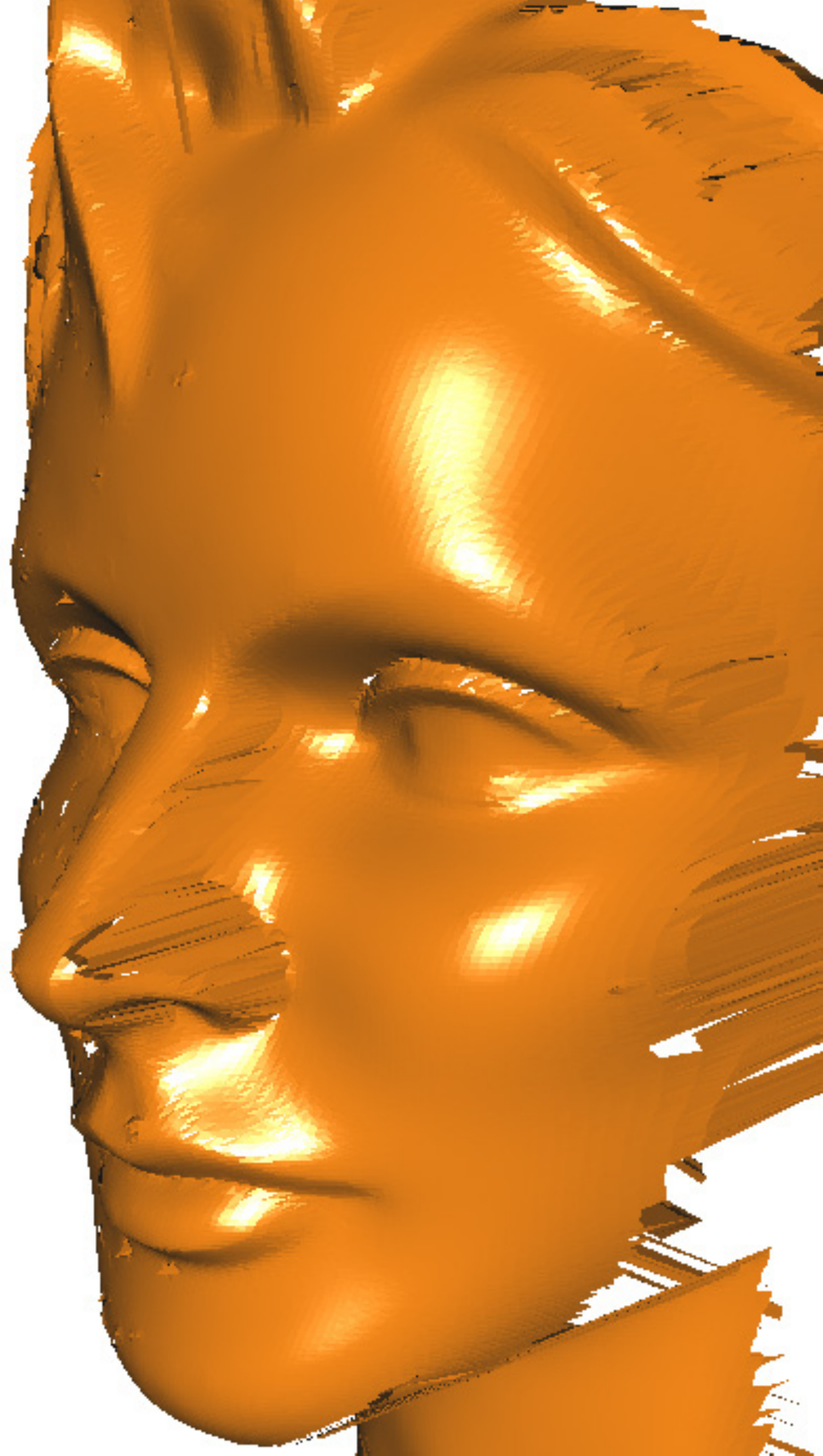}   \hskip 0.cm
  \includegraphics[width=3.1cm]{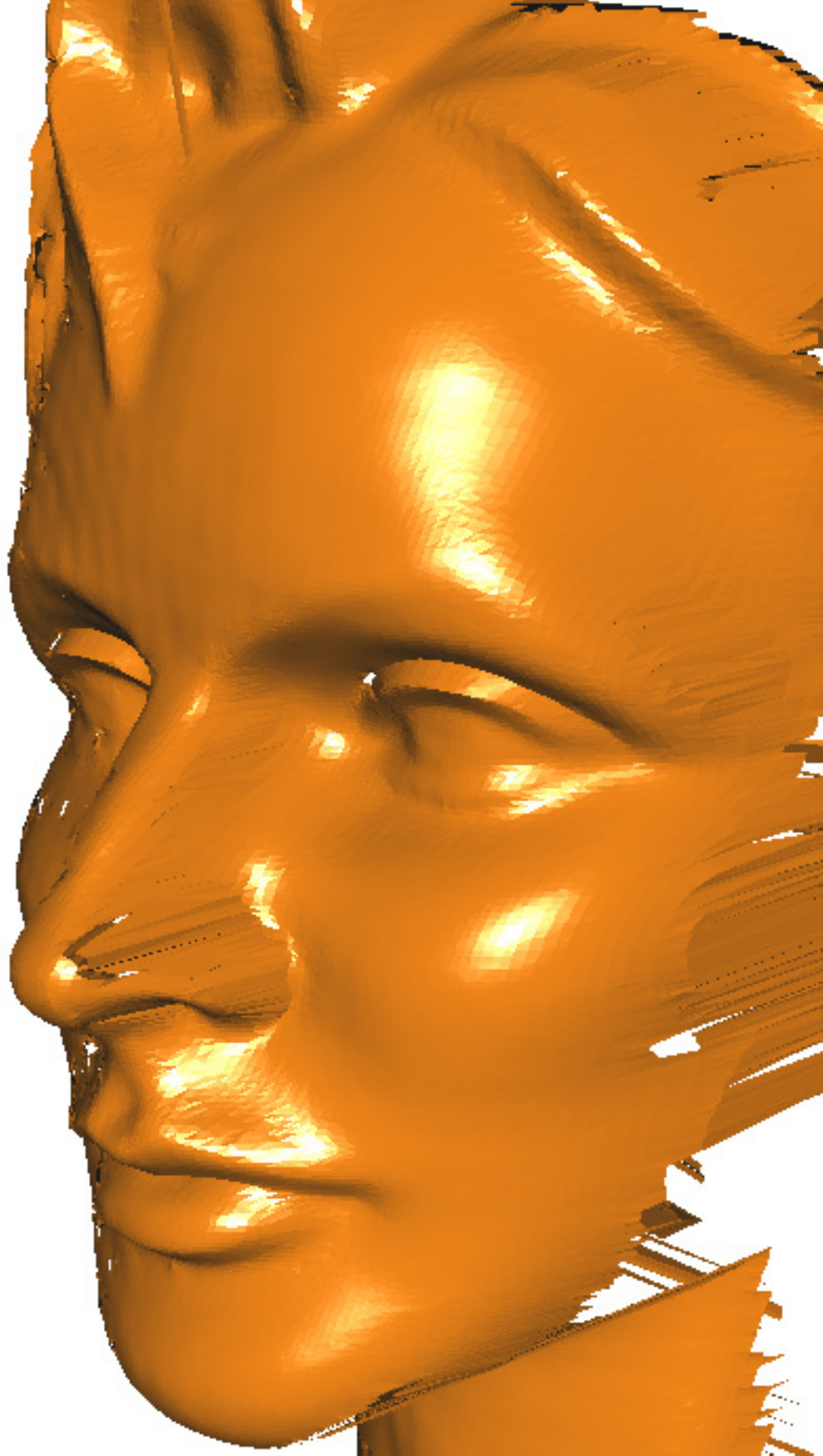}   \hskip 0.cm
  \includegraphics[width=3.1cm]{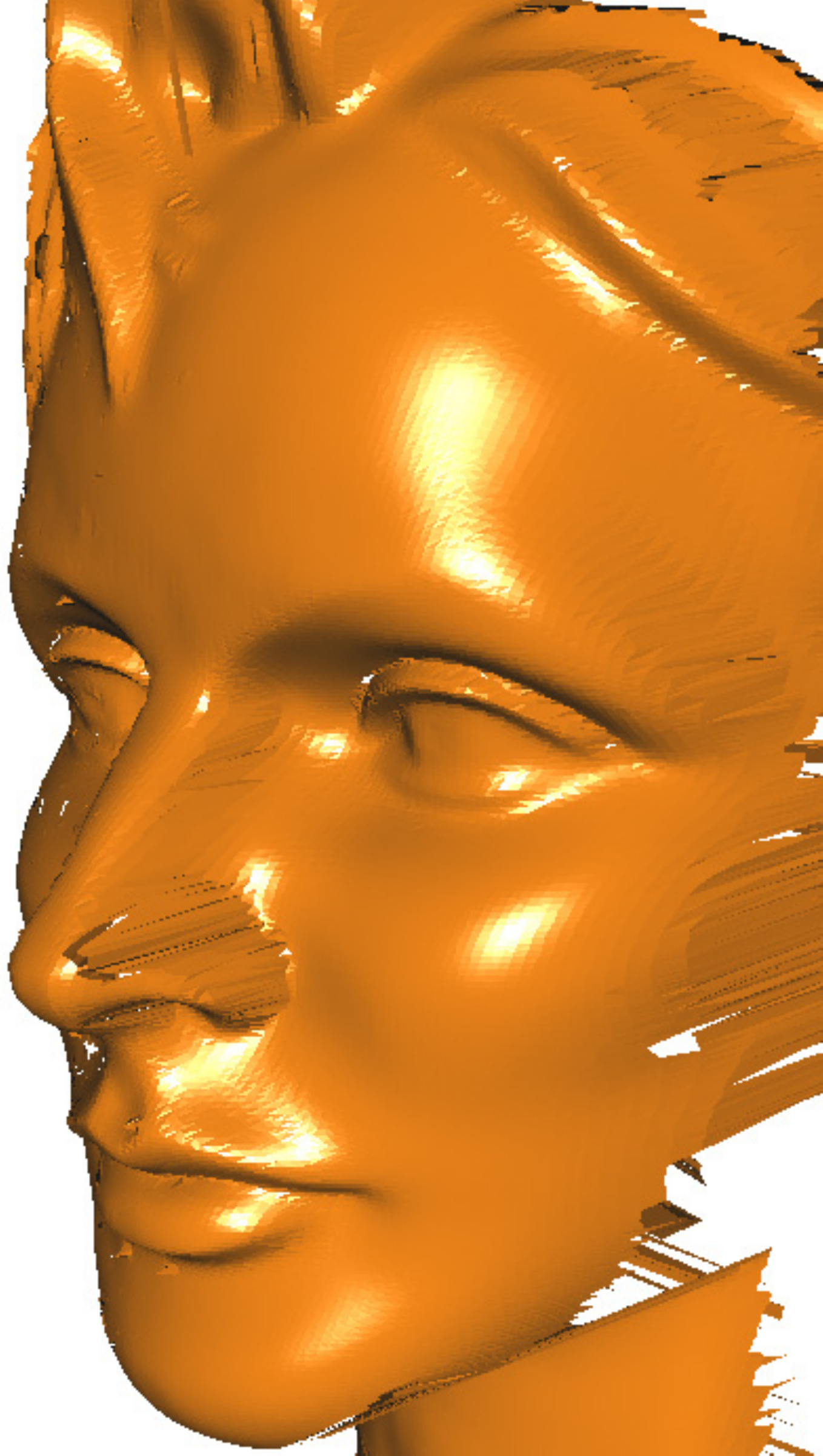}\\
  \includegraphics[width=3.1cm]{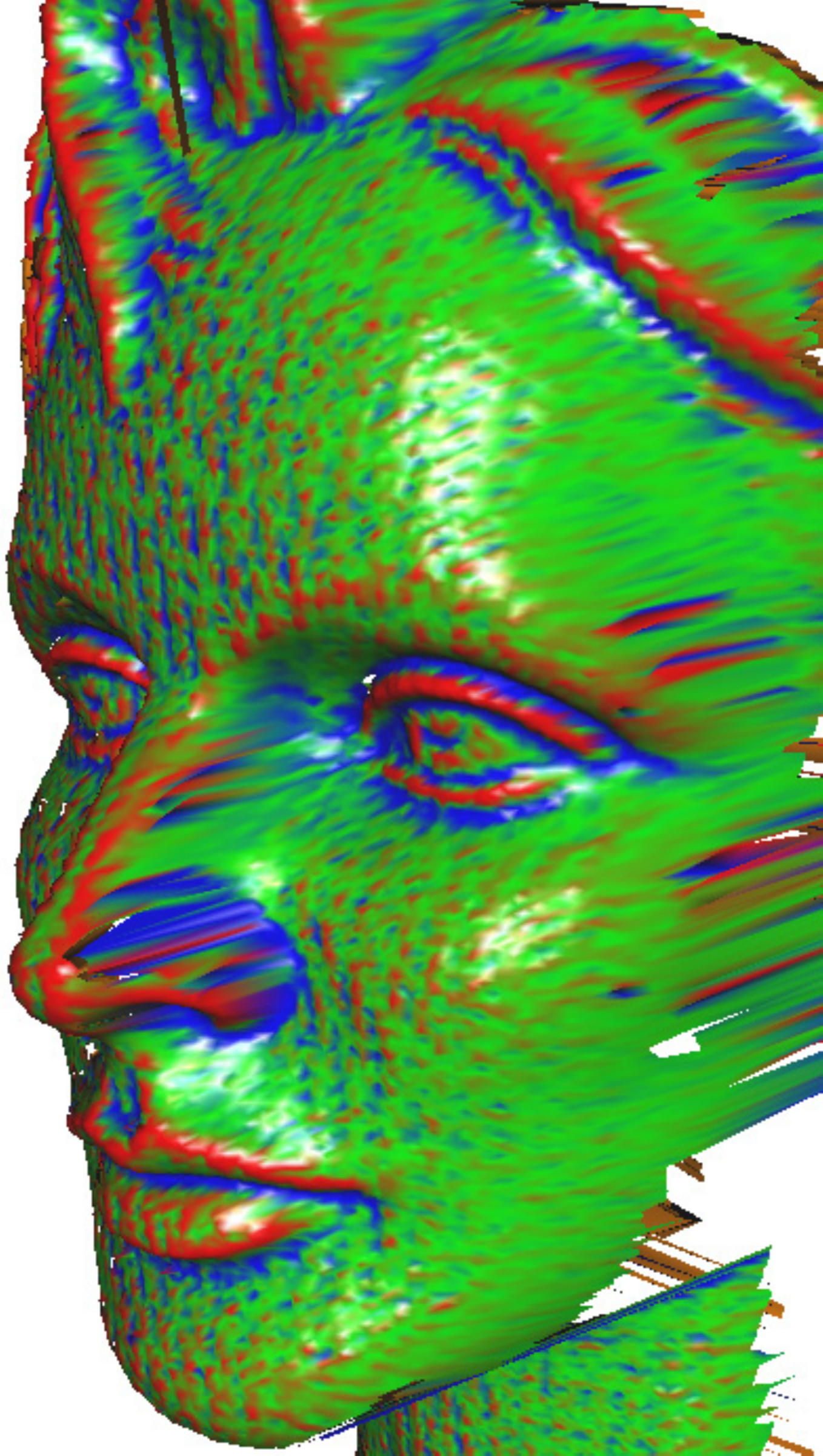}   \hskip 0.cm
  \includegraphics[width=3.1cm]{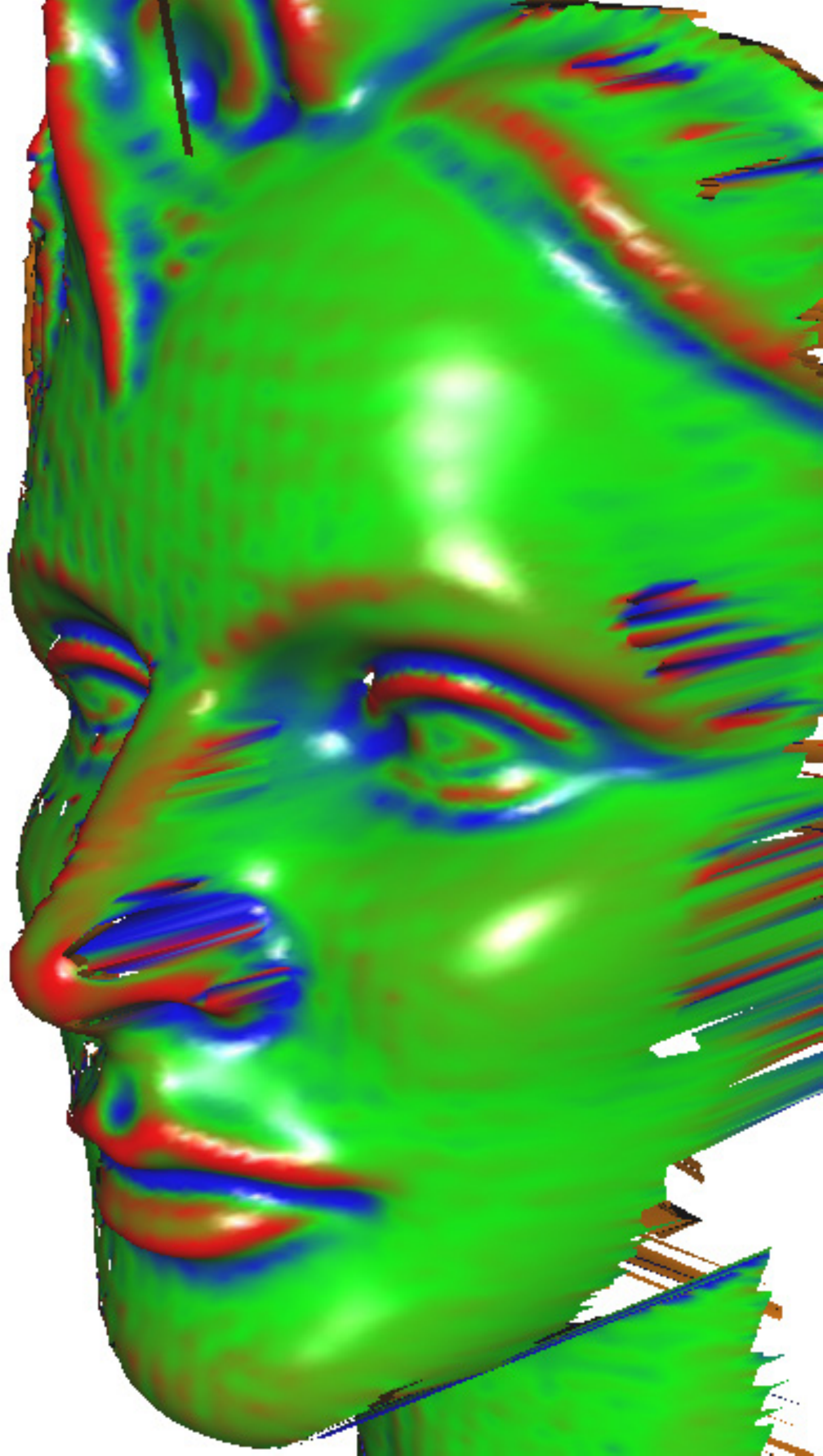}   \hskip 0.cm
  \includegraphics[width=3.1cm]{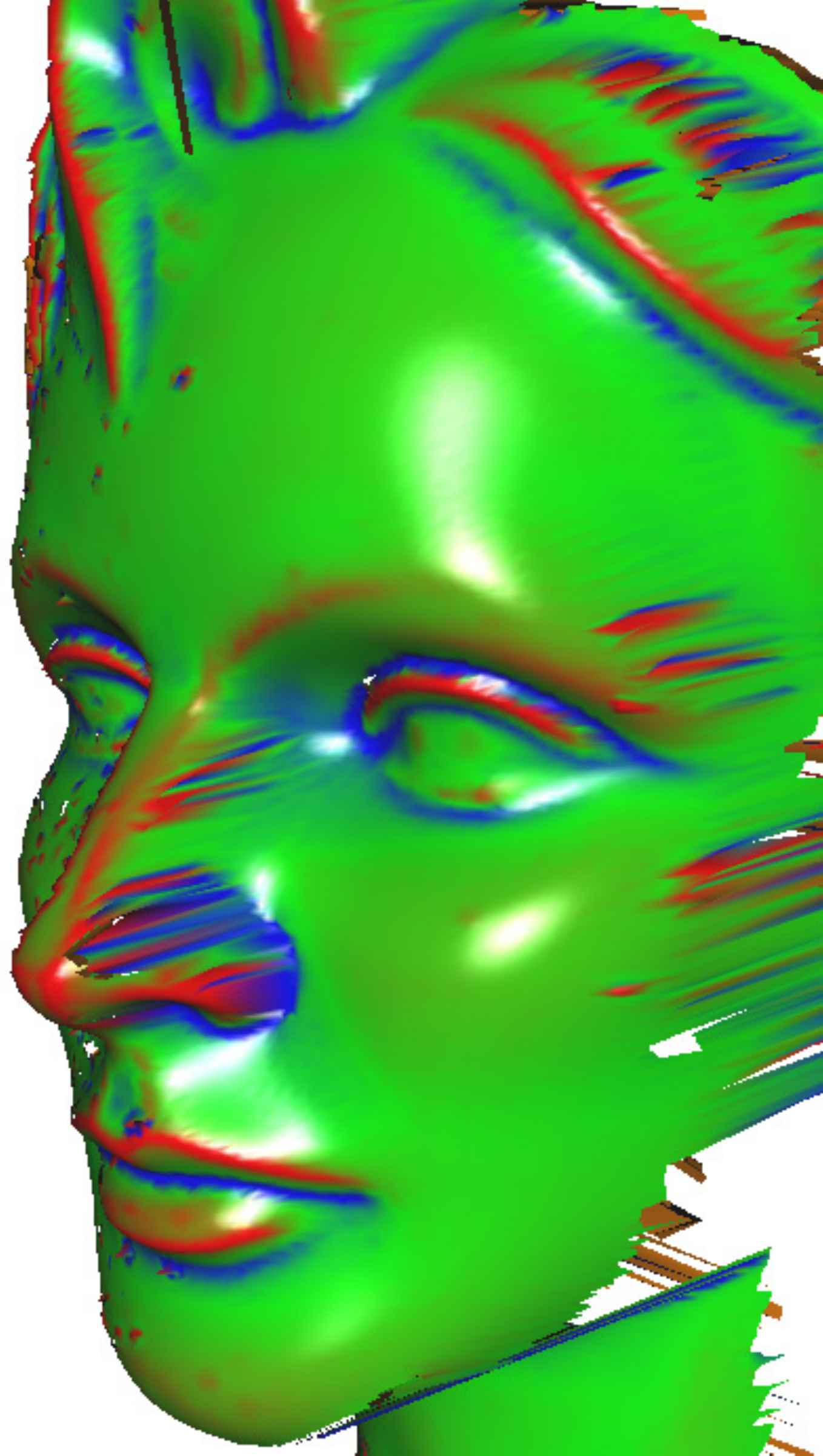}   \hskip 0.cm
  \includegraphics[width=3.1cm]{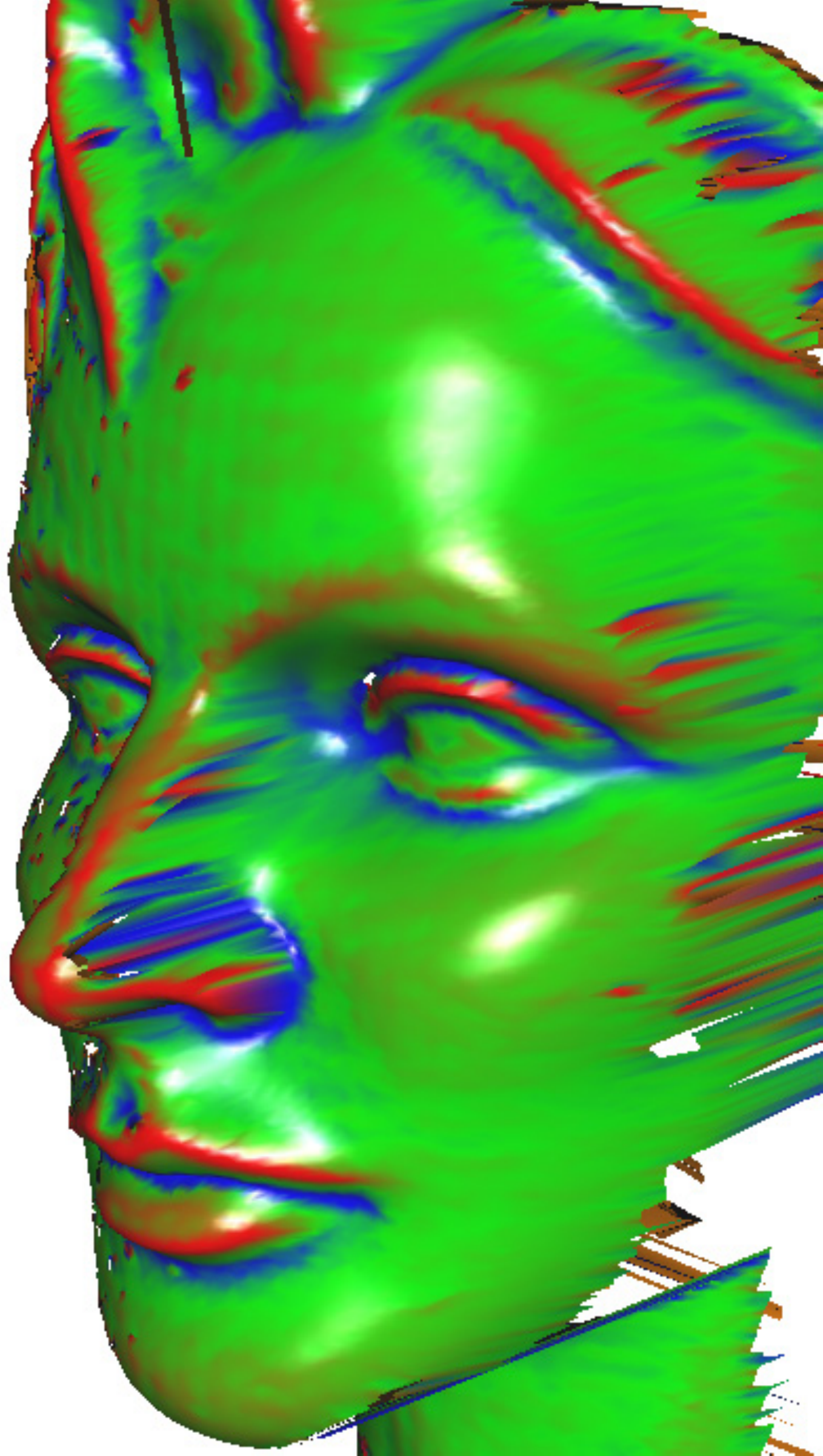}   \hskip 0.cm
  \includegraphics[width=3.1cm]{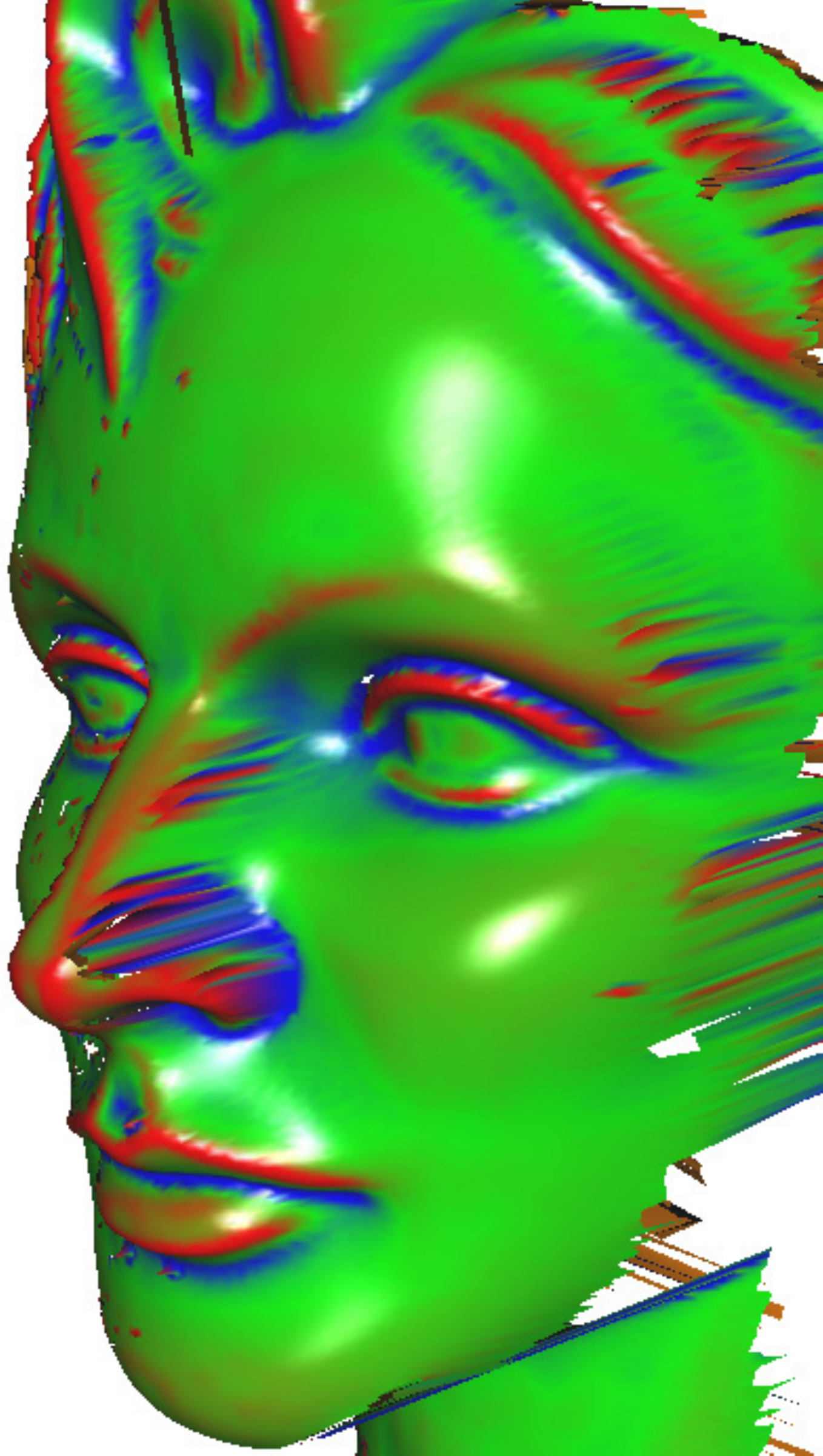}\\
  \vskip -0.005cm
  \subfigure[]{\includegraphics[width=3.0cm]{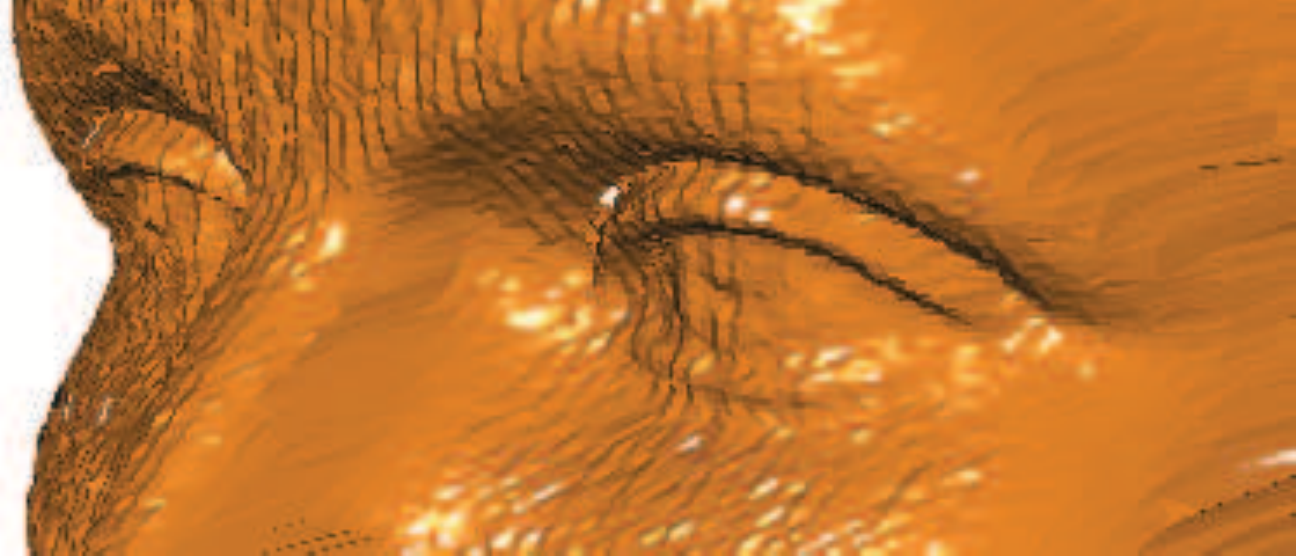}}   \hskip 0.cm
  \subfigure[]{\includegraphics[width=3.0cm]{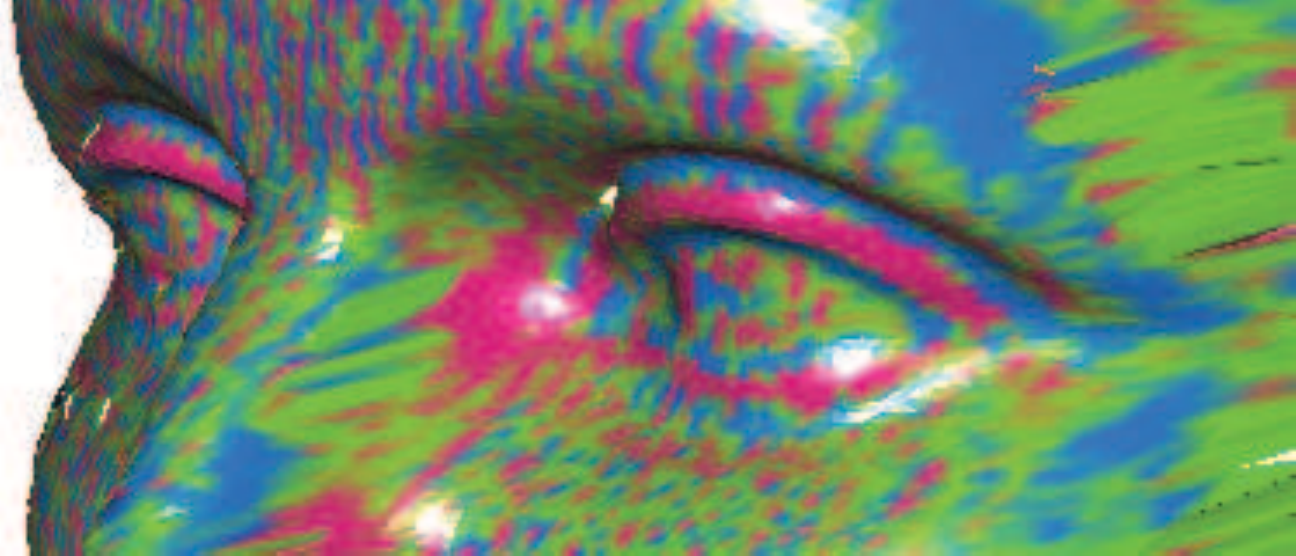}}   \hskip 0.cm
  \subfigure[]{\includegraphics[width=3.0cm]{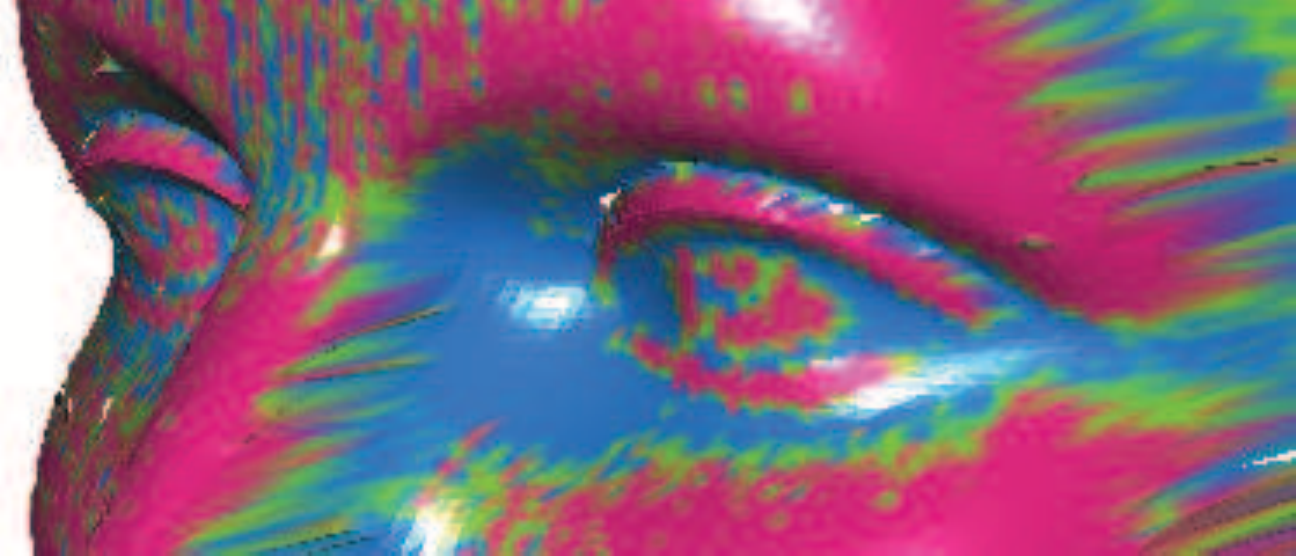}}   \hskip 0.cm
  \subfigure[]{\includegraphics[width=3.0cm]{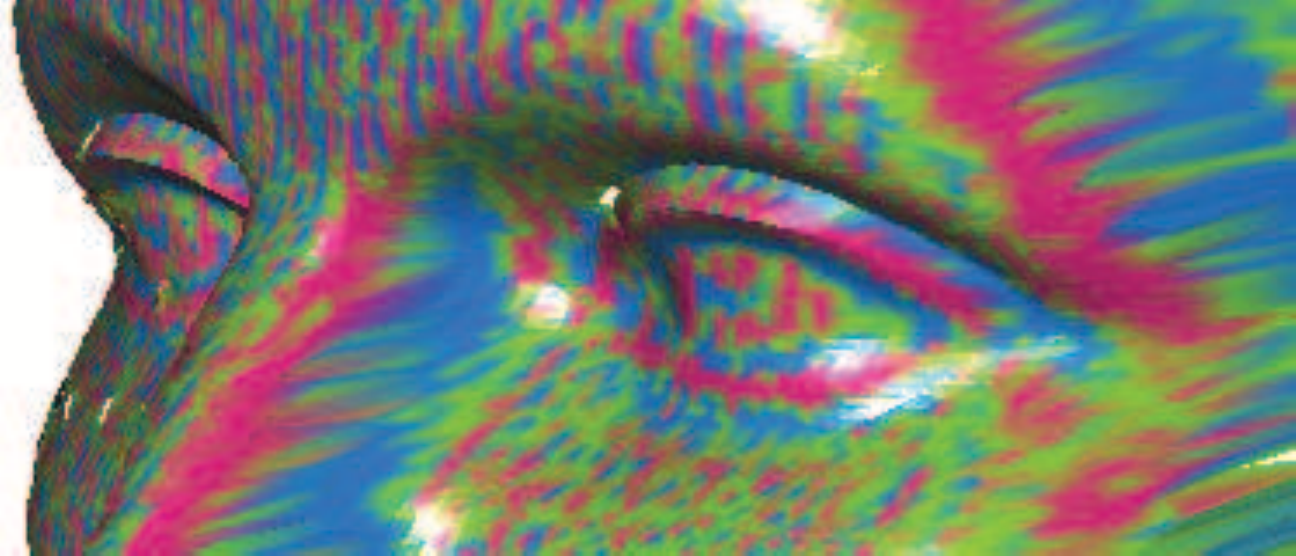}}   \hskip 0.cm
  \subfigure[]{\includegraphics[width=3.0cm]{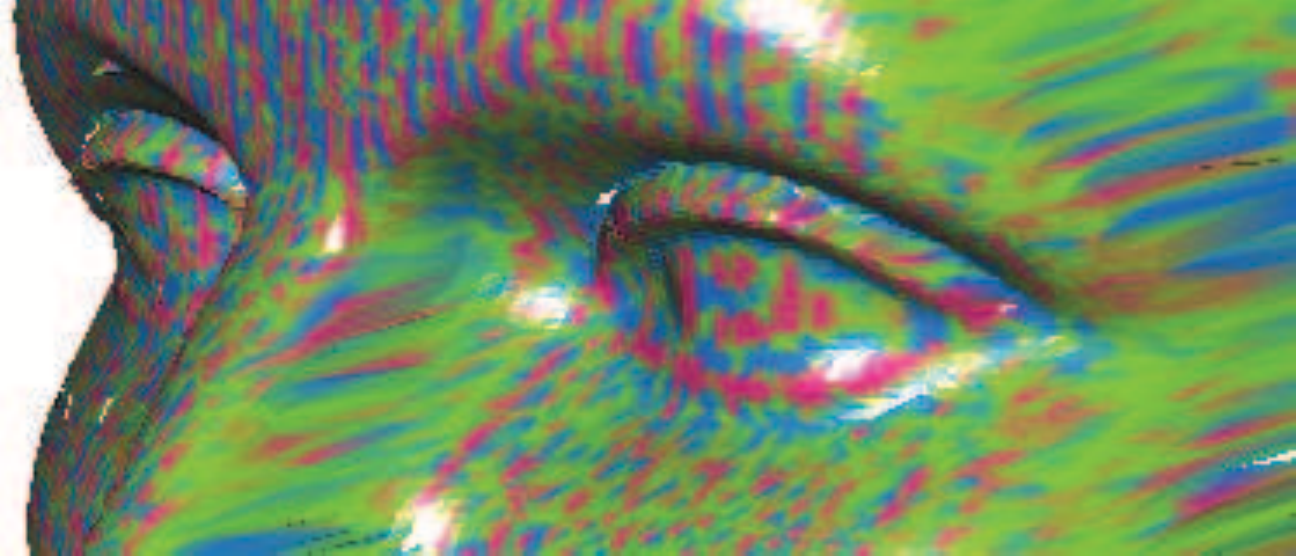}}
  \caption{(a) The input noisy mesh; (b) PMC; (c) BMF; (d) BNF; (e) proposed($\mathbf{p}_i^\ast=\mathbf{p}_i$).
  Middle row: mean curvature plots, where the red color represents the highest value while the blue color means the lowest.
  Bottom row: error plots for a selected region.}
  \label{Fig:femmeN_denoise}
\end{figure*}


Figure~\ref{Fig:femmeN_denoise} shows a real scan-reconstructed triangular
mesh of a face. We have compared our filtering result with those by prescribed
mean curvature (PMC) flow~\citep{Polthier04}, bilateral mesh denoising
(BMF)~\citep{Fleishman03} or bilateral normal filtering
(BNF)~\citep{ZhengFuTai2011}. Similar to ours, these three methods are
conceptually simple and can preserve salient features with no complex
pre-computation or costly optimization steps. Particularly, the parameters
for each known method are chosen carefully to achieve best filtering results.
To visualize the surface features and the filtering results clearly, the mean
curvature plots of the meshes before or after filtering are computed by
employing the method in~\citep{Goldfeather2004}. From the figure we see
that both PMC and BNF can preserve or even sharpen local features (e.g.
eyelids), but the obtained meshes still suffer low frequency noises in
non-feature regions. The BMF can give a smooth filtering result in
non-feature regions but minor features have been over smoothed due to the
well known shrinkage property of the method. Our proposed method can overcome
these shortcomings effectively, a high fidelity filtered mesh with neither
over-smoothed areas nor sharpened features is obtained.


\begin{figure*}[htb]
  \centering
  \subfigure[]{\includegraphics[width=5.2cm]{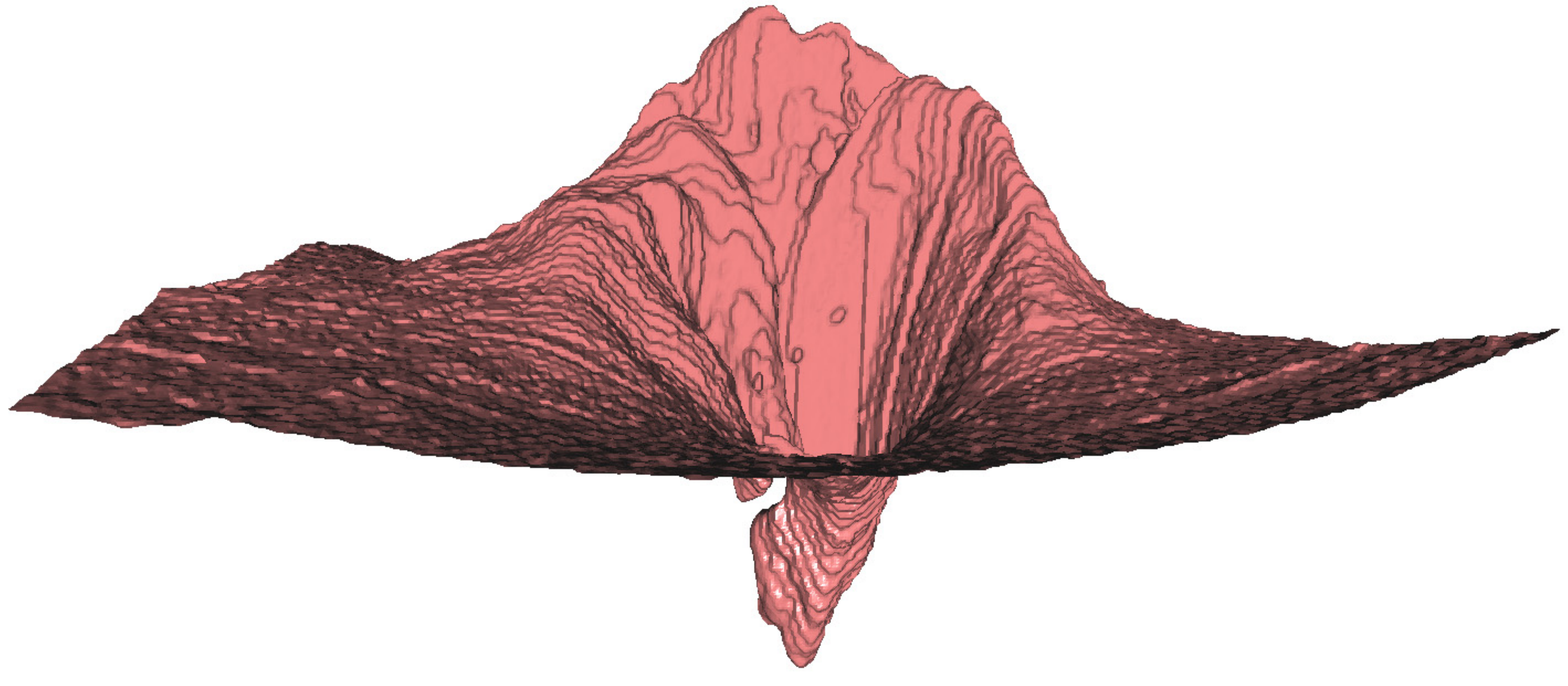}}
  \subfigure[]{\includegraphics[width=5.2cm]{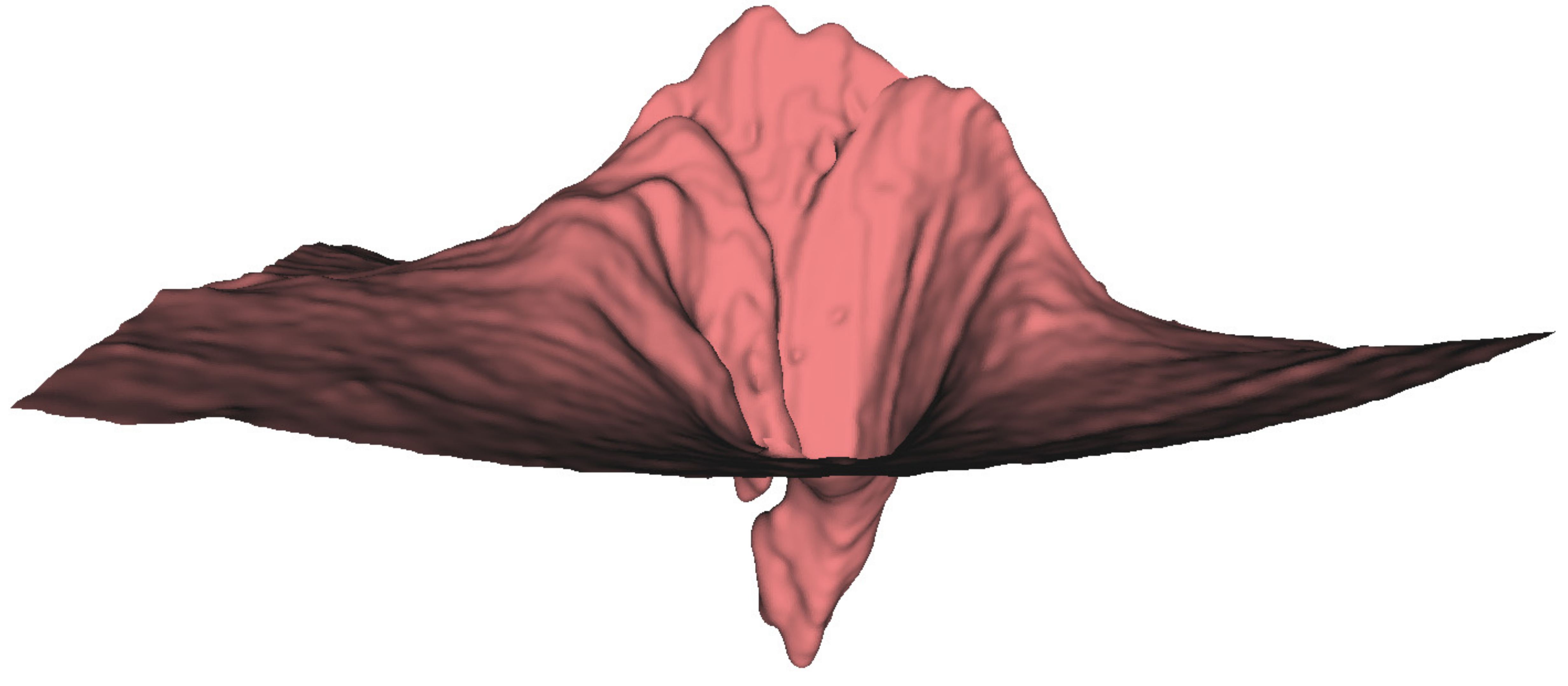}}
  \subfigure[]{\includegraphics[width=5.2cm]{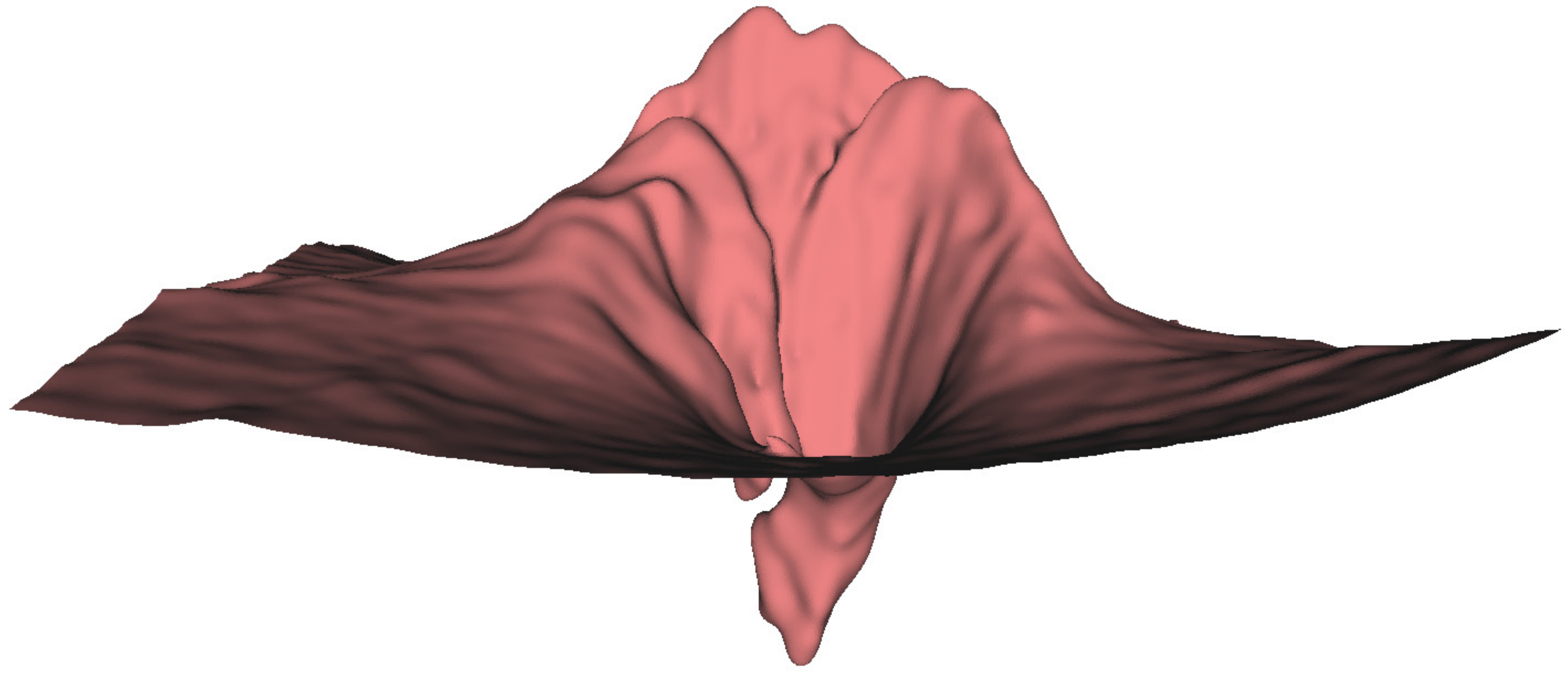}}
  \caption{(a) The original mesh; (b) high fidelity mesh denoising~\citep{YadavRP18:HighFidelityDenoising}
  (the original and the filtered meshes courtesy of S.K.Yadav); (c) proposed
($\mathbf{p}_i^\ast=\mathbf{p}_i^{center}$).} \label{Fig:retina_smoothing}
\end{figure*}

Figure~\ref{Fig:retina_smoothing} shows a mesh of a human retina constructed
from medical image data. The original surface has many undesired stairs due
to limited precision of the marching cubes algorithm. Most noises and minor
stairs have been successfully removed by a high fidelity denoising algorithm
proposed by Yadav et al.~(\citeyear{YadavRP18:HighFidelityDenoising}), but stairs
at large steps are still visible. Instead of using hard position constraint,
we filter the noisy mesh by our proposed H-LMS filter with weights and
parameters adaptively computed from the noisy data. By choosing properly
sized neighborhood for mesh vertex filtering, a visually smooth surface with
neither minor step stairs nor large ones is obtained.

\begin{figure*}[htb]
  \centering
  \subfigure[]{\includegraphics[width=3.0cm]{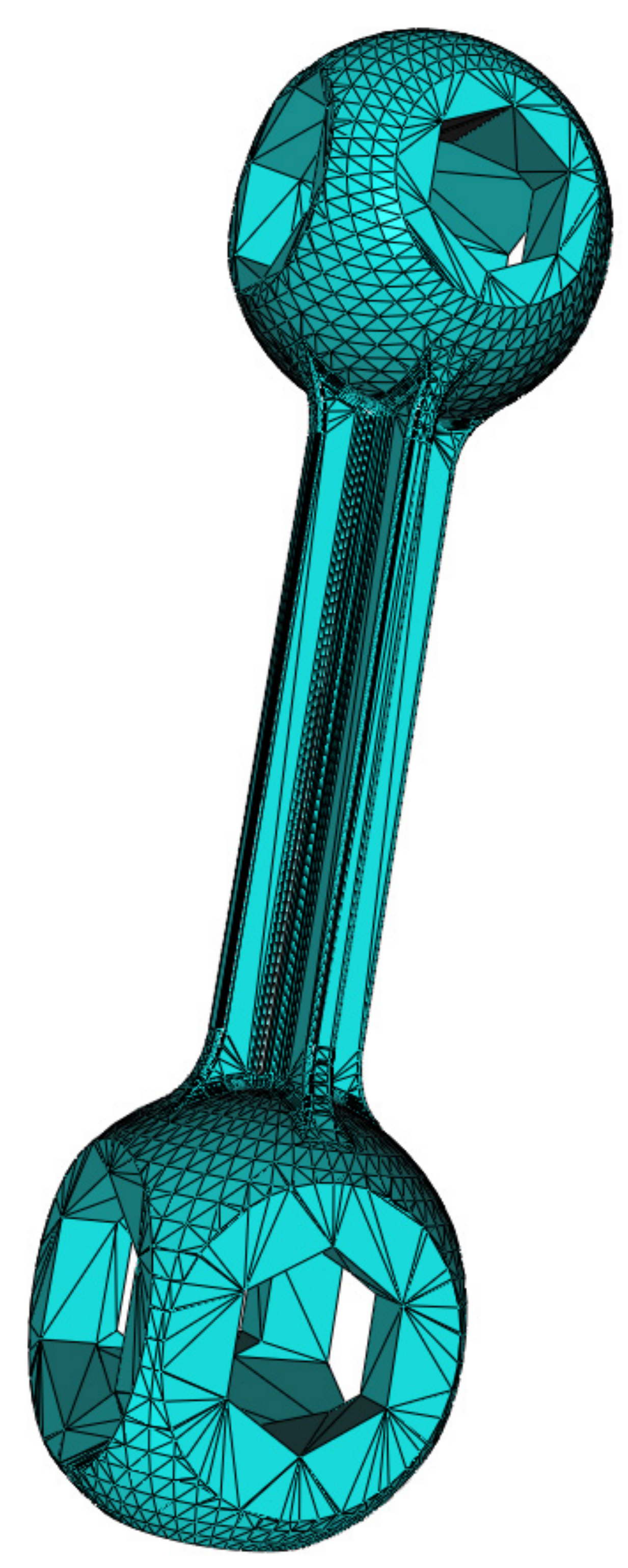}}
  \subfigure[]{\includegraphics[width=3.0cm]{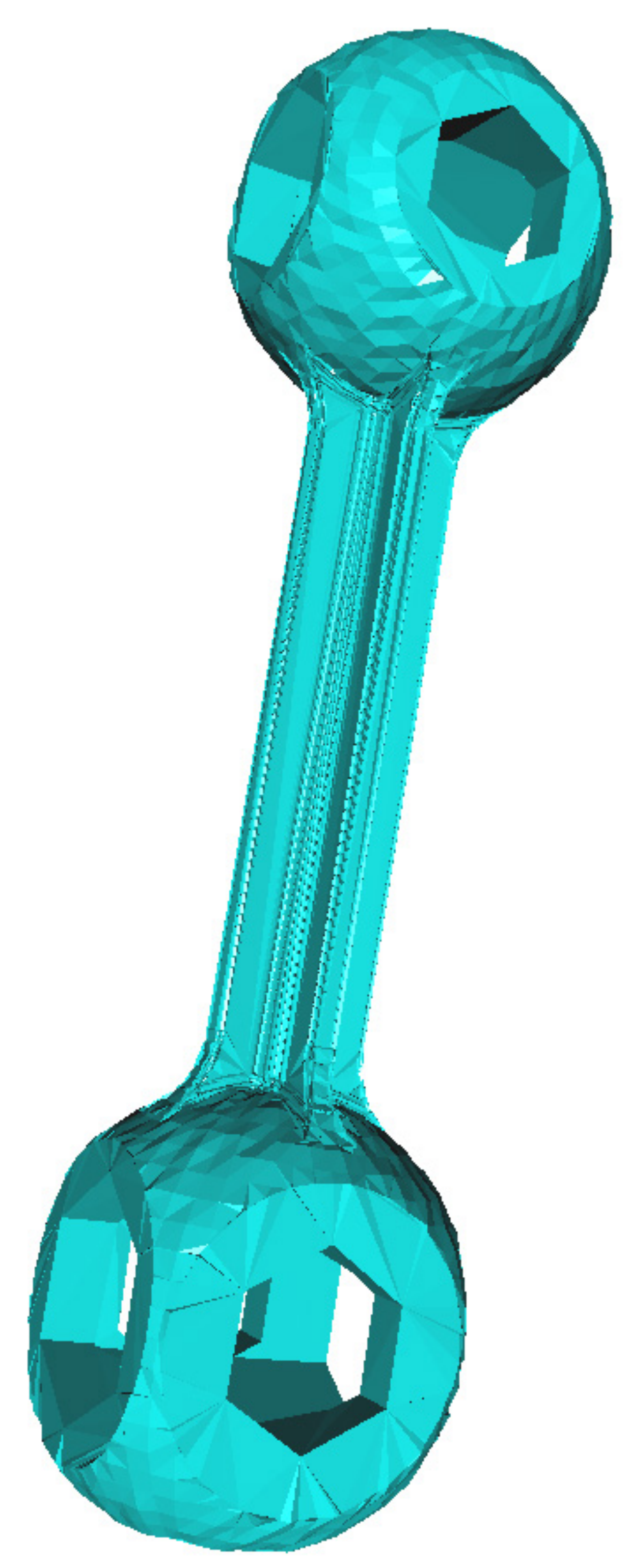}}
  \subfigure[]{\includegraphics[width=3.0cm]{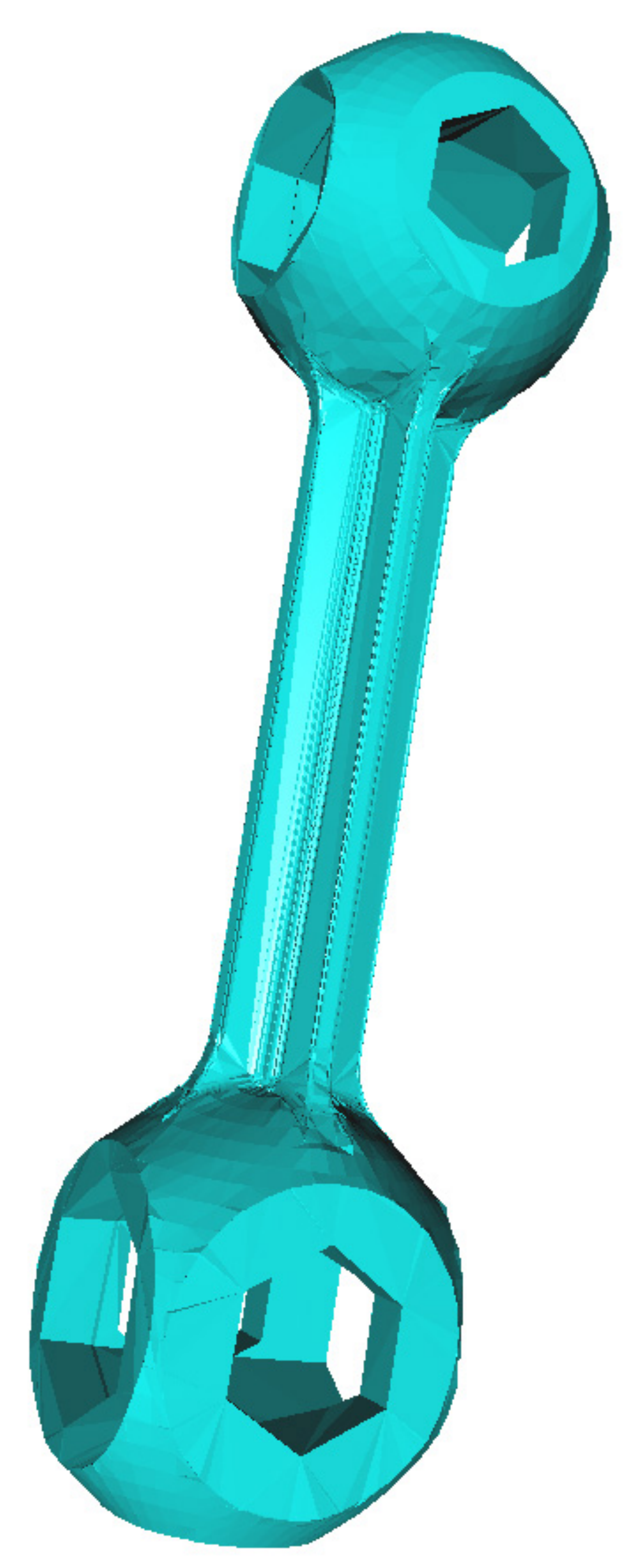}}
  \subfigure[]{\includegraphics[width=3.0cm]{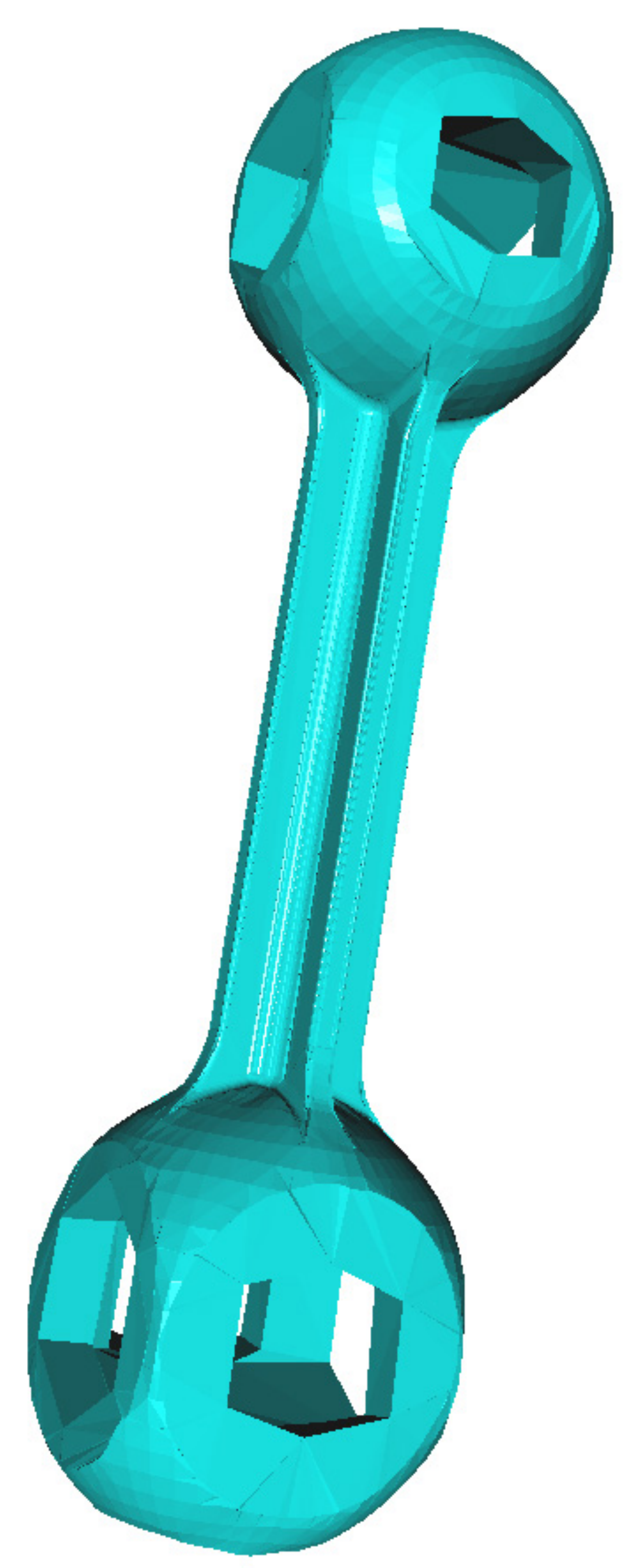}}
  \subfigure[]{\includegraphics[width=3.0cm]{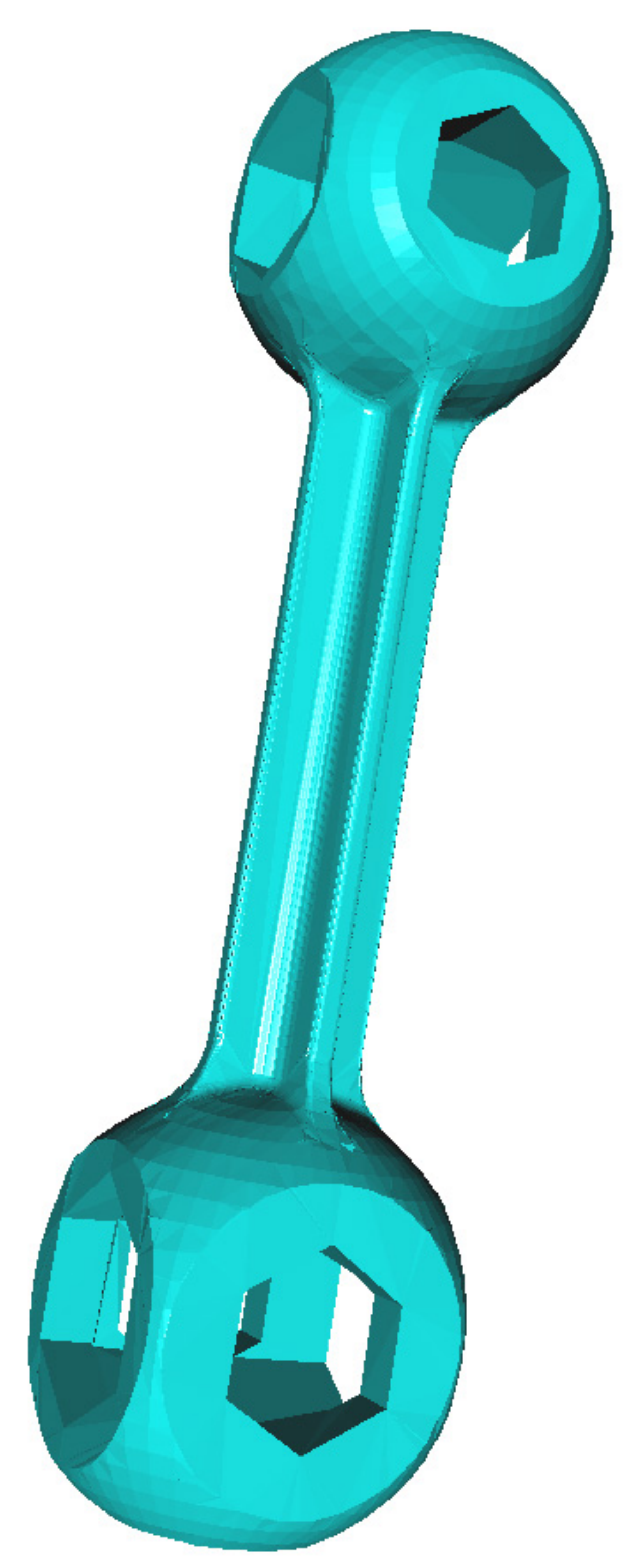}}
  \caption{(a) The ground truth model and its triangulation;
  (b) The mesh corrupted by uniformly distributed noise in normal directions (max. deviation $\pm10\%l_e$);
  (c) BMF; (d) BNF; (e) proposed($\mathbf{p}_i^\ast=\mathbf{p}_i$). }
  \label{Fig:Bonewrench_denoise}
\end{figure*}

Figure~\ref{Fig:Bonewrench_denoise} illustrates a mesh of a CAD model. The
original mesh contains highly irregular triangles and the surface is
consisting of piecewise smooth patches together with sharp features. From the
figure we can see that our proposed filter can almost recover the original
mesh exactly. As a comparison, there still exist flipped edges on the meshes
denoised by BMF or BNF due to the existence of long and thin triangles.



\begin{figure*}[htb]
  \centering
  \subfigure[]{\includegraphics[width=3.0cm]{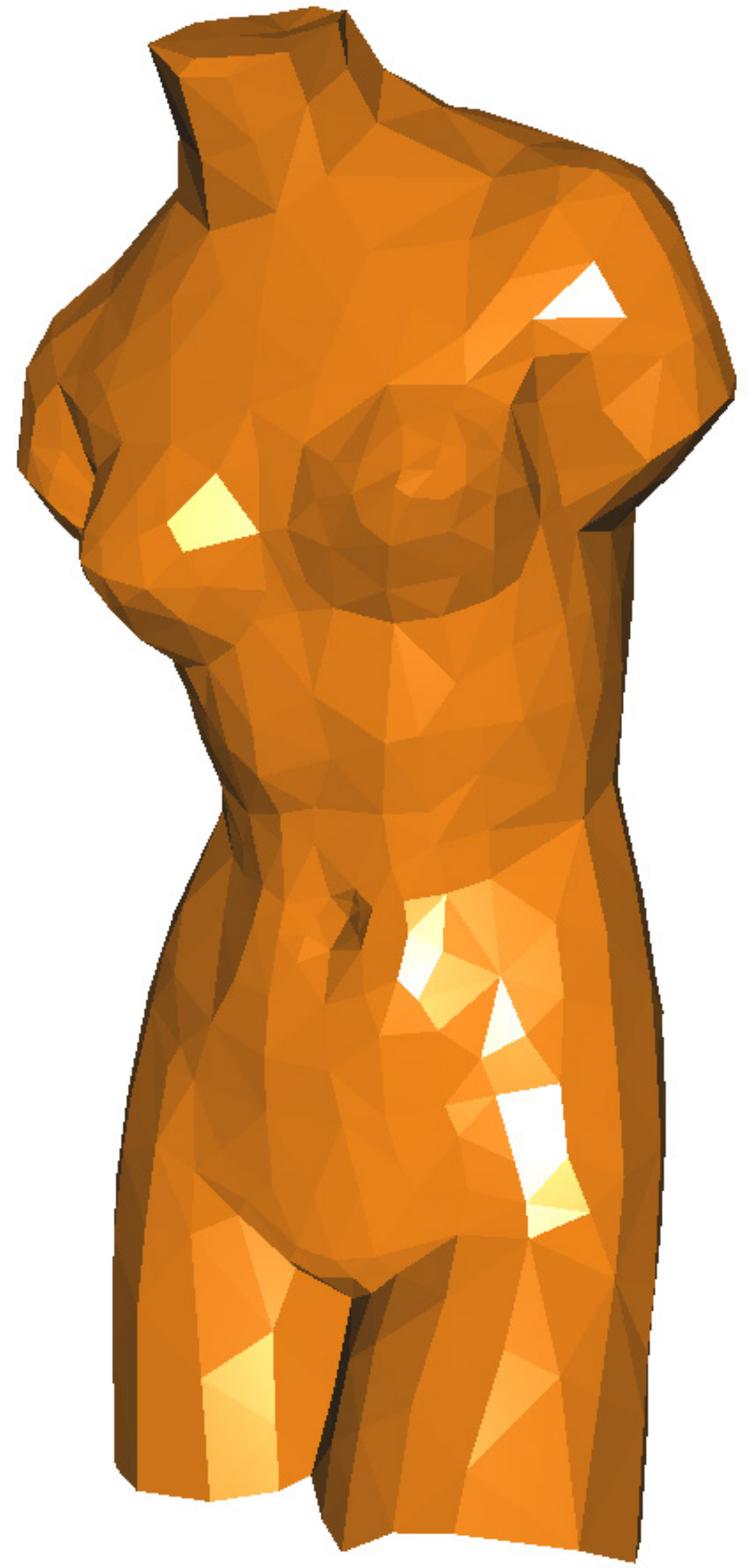}}
  \subfigure[]{\includegraphics[width=3.0cm]{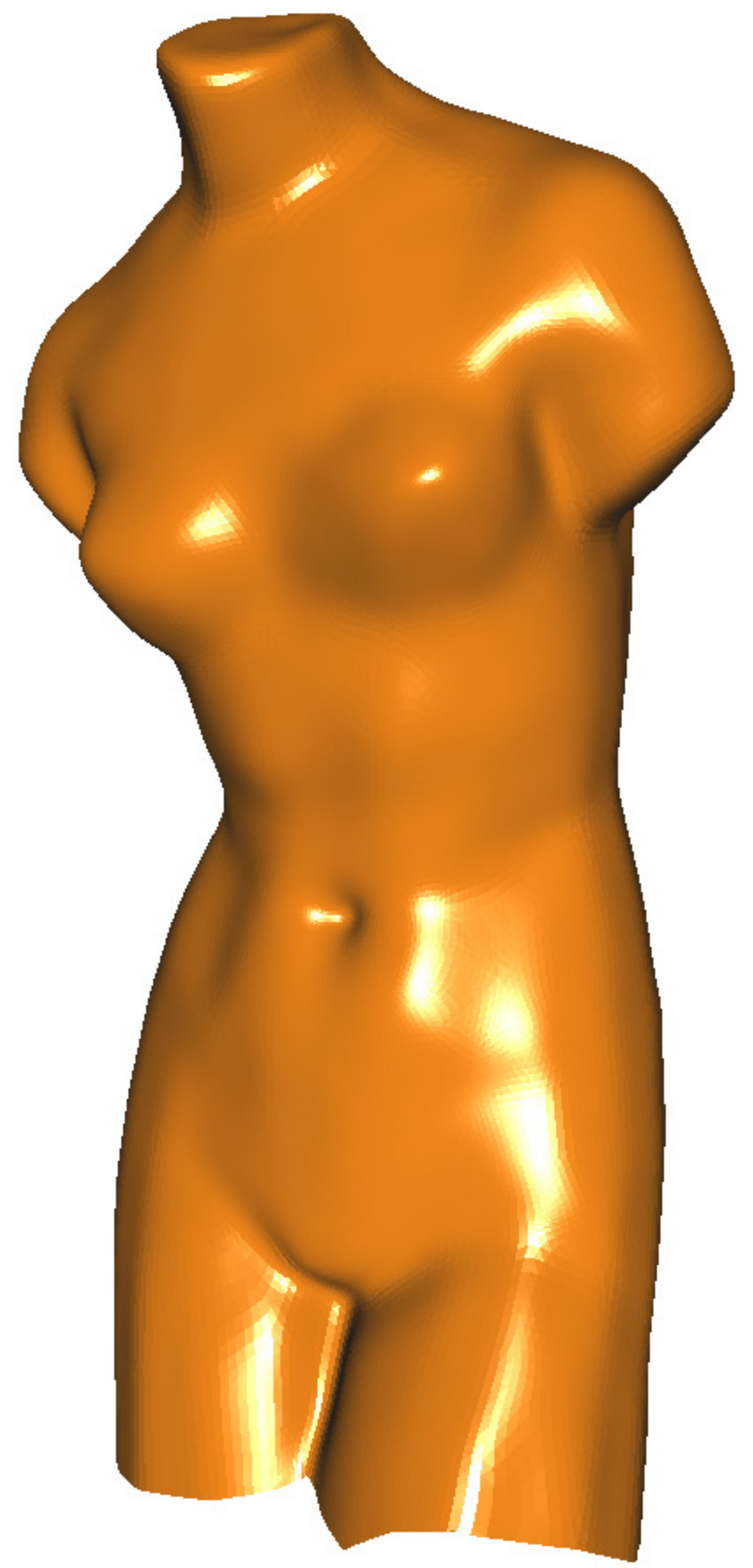}}
  \subfigure[]{\includegraphics[width=3.0cm]{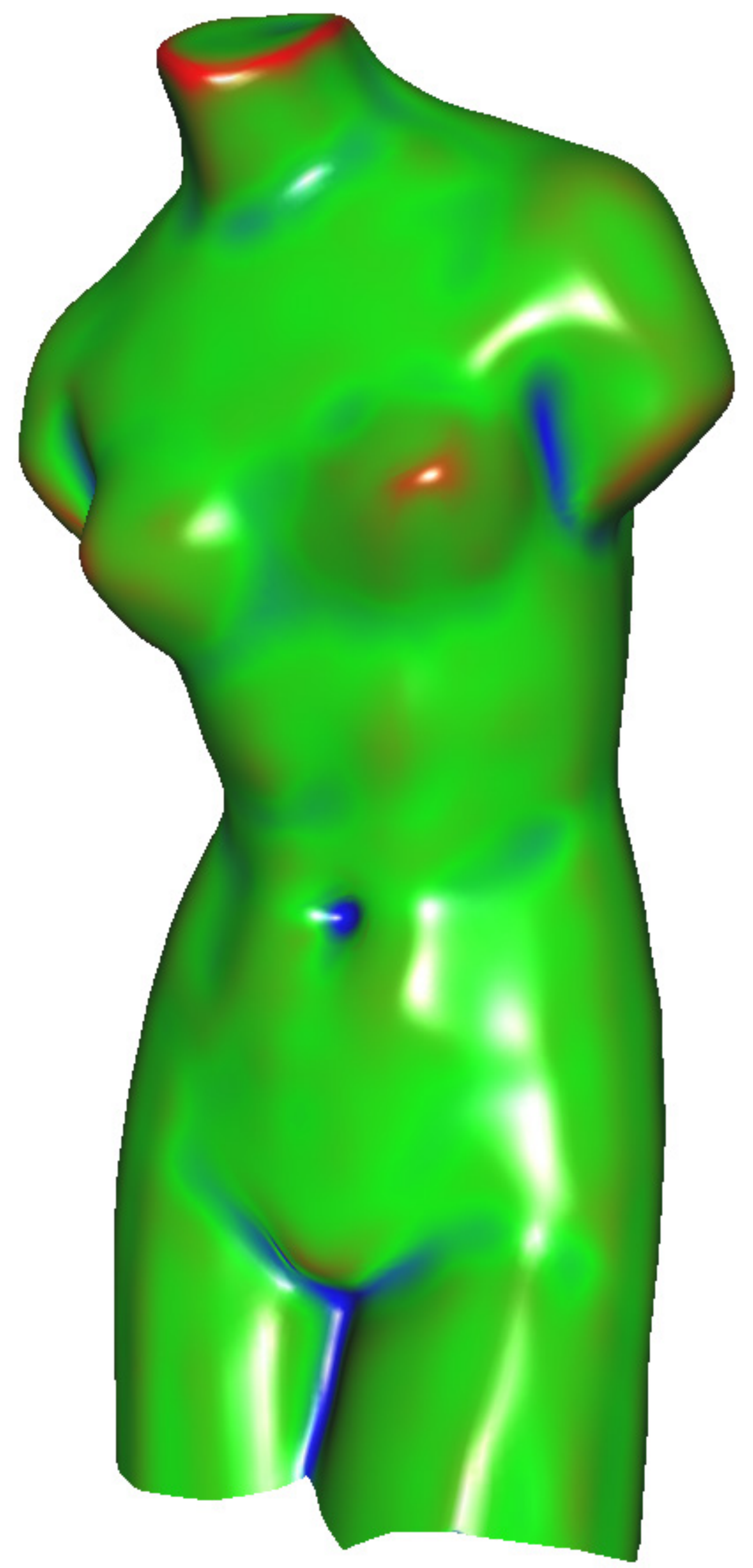}}
  \subfigure[]{\includegraphics[width=3.0cm]{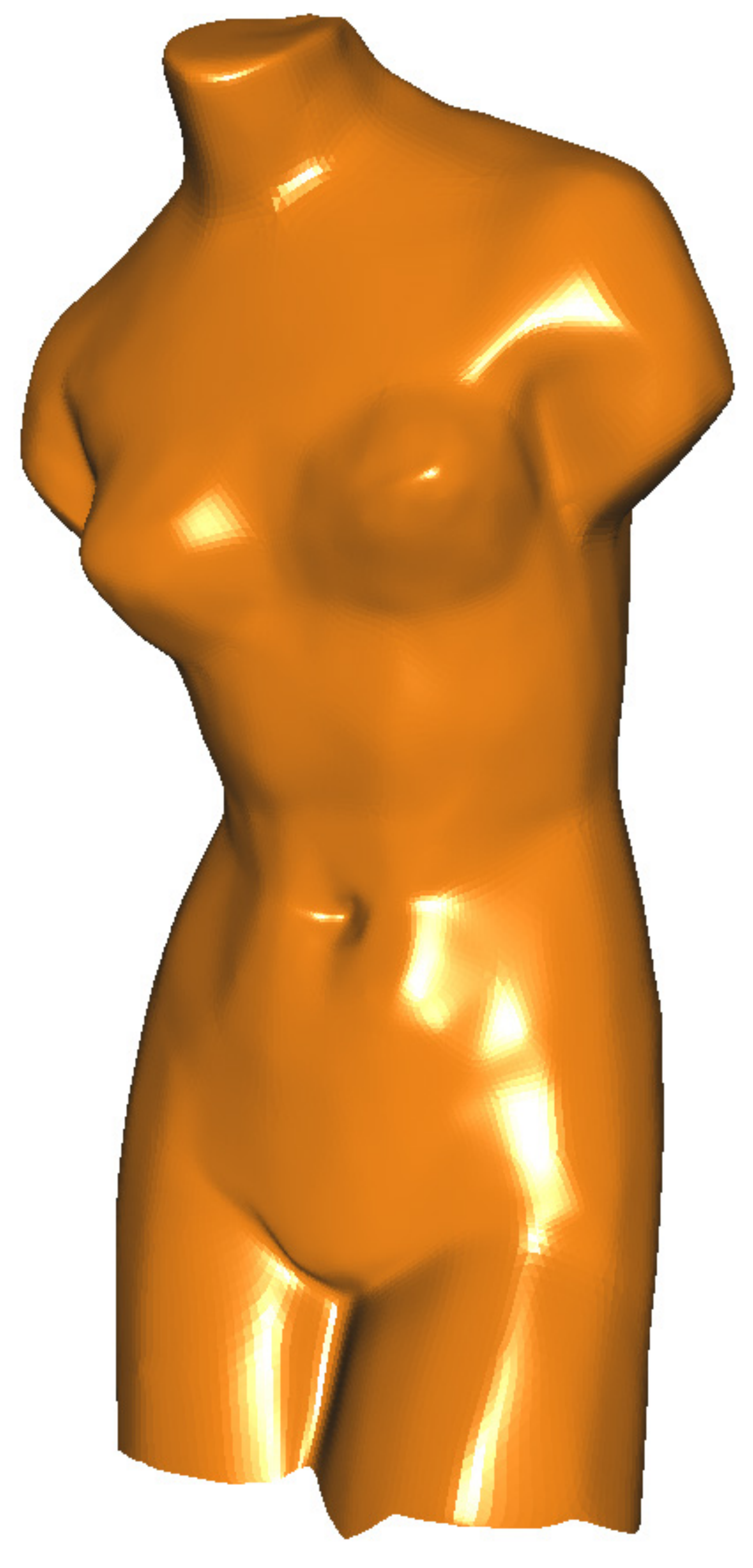}}
  \subfigure[]{\includegraphics[width=3.0cm]{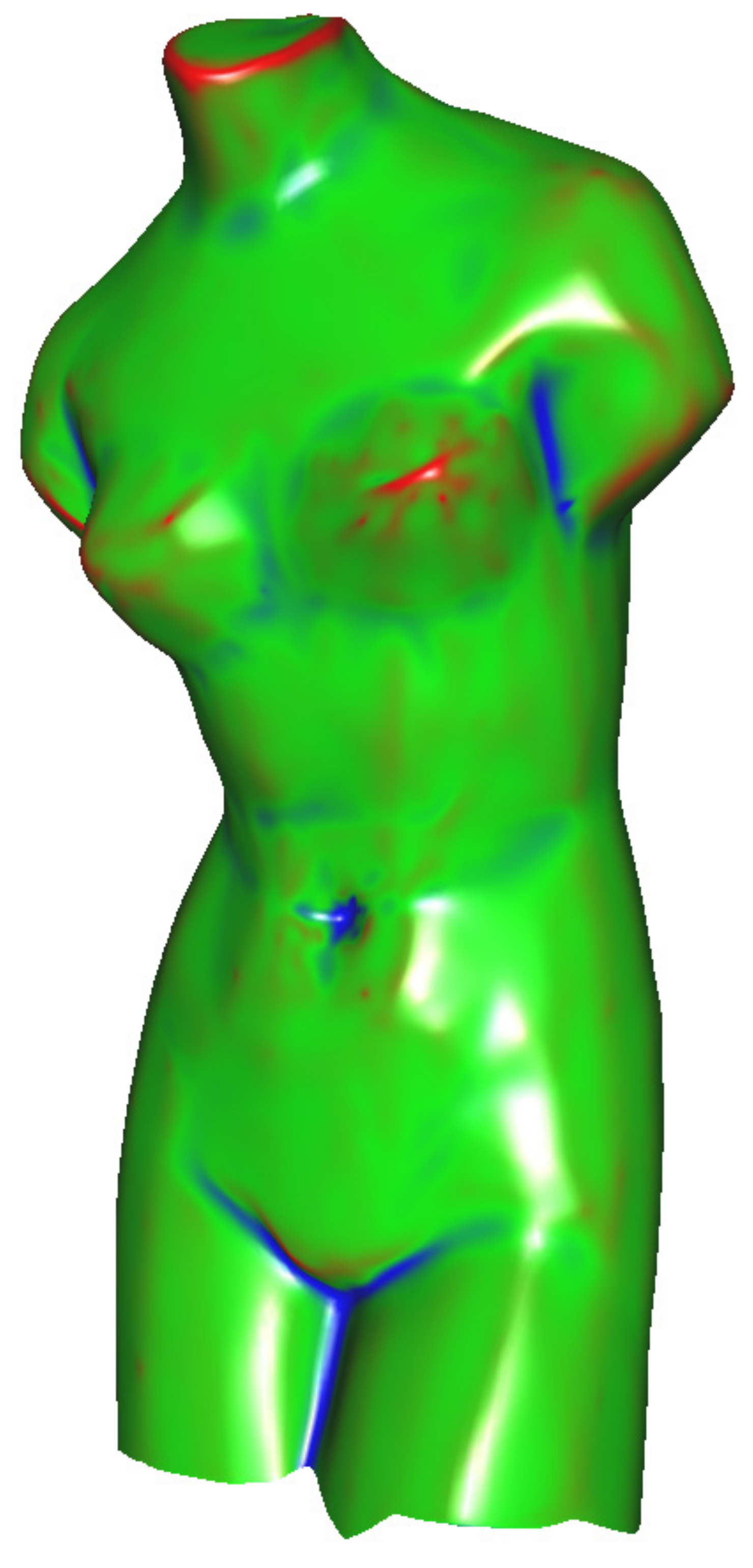}}
  \caption{Constructing a smooth surface by filtering a linearly subdivided mesh: (a) the mesh after 3 iterations of binary linear subdivision;
  (b) result by the proposed filter($\mathbf{p}_i^\ast=\mathbf{p}_i^{center}$); (c) the mean curvature plot of (b);
  (d) the mesh obtained by linear filtering; (e) mean curvature plot of (d). }
  \label{Fig:venus_smoothing}
\end{figure*}

Besides filtering a noisy mesh, our proposed filter can also be used to
construct smooth surfaces from a rough mesh.
Figure~\ref{Fig:venus_smoothing}(a) illustrates a triangular mesh obtained by
3 times of linear binary subdivision. A visually smooth surface is obtained
by 5 iterations of H-LMS filtering. The curvature plot also demonstrates
the quality of the smoothed mesh. See
Figure~\ref{Fig:venus_smoothing}(b)\&(c) for the result. As a comparison, if
we filter the linearly subdivided mesh by a linear filter, the obtained
surface is no longer as fair as expected. See
Figure~\ref{Fig:venus_smoothing}(d)\&(e) for the surface obtained by 10
iterations of $\lambda|\mu$ algorithm~\citep{Tau95}. We note that discrete
fair surfaces can also be constructed by some optimization techniques,
e.g.~\citep{SchneiderK2001GeoFairing,CranePS13Robustfairing}. Even though, our
proposed filter owns the advantages of simplicity and efficiency for fair
surface modeling.

\section{Conclusions and discussion}
\label{Section:conclusion}

In this paper we have presented a simple but effective filter for
anisotropic mesh filtering. Particularly, new positions of mesh vertices are
computed by homogeneous least-squares fitting of moving constants to Hermite data.
Curvature-aware parameters and weights for the least squares fitting have been
robustly computed from the noisy data. The proposed filter can be used to
filter meshes with various kinds of noise as well as meshes with highly
irregular triangulation. The filtered mesh has high quality as well as high
fidelity to the original data. Smooth surfaces with various curvatures can be restored from noisy meshes effectively by the proposed filter. Sharp features with
discontinuous normals can also be preserved well when the noise magnitudes
are not high.

\textbf{Limitations}. Our proposed filter can be used to filter meshes with
locally high curvatures but free of sharp features or meshes having sharp
features but with only low magnitudes noise very well. If the sharp features
of a noisy mesh cannot be distinguished from high-magnitude noise locally,
our proposed filter no longer preserves sharp features.

\textbf{Future work}. At present we filter meshes with no need of filtering normals beforehand.
Combining the filter with other geometric processing techniques such as
robust normal estimation or feature detection, etc. will give more impressive
results. We focus on mesh filtering in this paper, another interesting future
work is to adapt the proposed filter for point set surface processing.

\bibliography{HomogeneousFilter-bibliography}

\begin{thebibliography}{40}
\expandafter\ifx\csname natexlab\endcsname\relax\def\natexlab#1{#1}\fi
\providecommand{\url}[1]{\texttt{#1}}
\providecommand{\href}[2]{#2}
\providecommand{\path}[1]{#1}
\providecommand{\DOIprefix}{doi:}
\providecommand{\ArXivprefix}{arXiv:}
\providecommand{\URLprefix}{URL: }
\providecommand{\Pubmedprefix}{pmid:}
\providecommand{\doi}[1]{\href{http://dx.doi.org/#1}{\path{#1}}}
\providecommand{\Pubmed}[1]{\href{pmid:#1}{\path{#1}}}
\providecommand{\bibinfo}[2]{#2}
\ifx\xfnm\relax \def\xfnm[#1]{\unskip,\space#1}\fi
\bibitem[{Alexa(2002)}]{Alexa02}
\bibinfo{author}{Alexa, M.}, \bibinfo{year}{2002}.
\newblock \bibinfo{title}{Wiener filtering of meshes}, in:
  \bibinfo{booktitle}{Shape Modeling International}, pp.
  \bibinfo{pages}{51--57}.
\bibitem[{Bajaj and Xu(2003)}]{BaXu03}
\bibinfo{author}{Bajaj, C.}, \bibinfo{author}{Xu, G.}, \bibinfo{year}{2003}.
\newblock \bibinfo{title}{Anisotropic diffusion of surfaces and functions on
  surfaces}.
\newblock \bibinfo{journal}{ACM Transactions on Graphics} \bibinfo{volume}{22},
  \bibinfo{pages}{4--32}.
\bibitem[{Bian and Tong(2011)}]{BianTong2011}
\bibinfo{author}{Bian, Z.}, \bibinfo{author}{Tong, R.}, \bibinfo{year}{2011}.
\newblock \bibinfo{title}{Feature-preserving mesh denoising based on vertices
  classification}.
\newblock \bibinfo{journal}{Computer Aided Geometric Design}
  \bibinfo{volume}{28}, \bibinfo{pages}{50--64}.
\bibitem[{Centin and
  Signoroni(2018)}]{CentinS18:MeshdenoisingwithGeoMetricFidelity}
\bibinfo{author}{Centin, M.}, \bibinfo{author}{Signoroni, A.},
  \bibinfo{year}{2018}.
\newblock \bibinfo{title}{Mesh denoising with (geo)metric fidelity}.
\newblock \bibinfo{journal}{{IEEE} Trans. Vis. Comput. Graph.}
  \bibinfo{volume}{24}, \bibinfo{pages}{2380--2396}.
\bibitem[{Clarenz et~al.(2000)Clarenz, Diewald and Rumpf}]{Clar00}
\bibinfo{author}{Clarenz, U.}, \bibinfo{author}{Diewald, U.},
  \bibinfo{author}{Rumpf, M.}, \bibinfo{year}{2000}.
\newblock \bibinfo{title}{Anisotropic geometric diffusion in surface
  processing}, in: \bibinfo{booktitle}{IEEE Visualization 2000}, pp.
  \bibinfo{pages}{397--405}.
\bibitem[{Crane et~al.(2013)Crane, Pinkall and
  Schr{\"{o}}der}]{CranePS13Robustfairing}
\bibinfo{author}{Crane, K.}, \bibinfo{author}{Pinkall, U.},
  \bibinfo{author}{Schr{\"{o}}der, P.}, \bibinfo{year}{2013}.
\newblock \bibinfo{title}{Robust fairing via conformal curvature flow}.
\newblock \bibinfo{journal}{{ACM} Trans. Graph.} \bibinfo{volume}{32},
  \bibinfo{pages}{61:1--61:10}.
\bibitem[{Desbrun et~al.(1999)Desbrun, Meyer, Schr{\"o}der and
  Barr}]{Desbrun99}
\bibinfo{author}{Desbrun, M.}, \bibinfo{author}{Meyer, M.},
  \bibinfo{author}{Schr{\"o}der, P.}, \bibinfo{author}{Barr, A.H.},
  \bibinfo{year}{1999}.
\newblock \bibinfo{title}{Implicit fairing of irregular meshes using diffusion
  and curvature flow}, in: \bibinfo{booktitle}{Proceedings of SIGGRAPH 1999},
  \bibinfo{organization}{ACM}. pp. \bibinfo{pages}{317--324}.
\bibitem[{Diebel et~al.(2006)Diebel, Thrun and Bruenig}]{Diebel06}
\bibinfo{author}{Diebel, J.R.}, \bibinfo{author}{Thrun, S.},
  \bibinfo{author}{Bruenig, M.}, \bibinfo{year}{2006}.
\newblock \bibinfo{title}{A {B}ayesian method for probable surface
  reconstruction and decimation}.
\newblock \bibinfo{journal}{ACM Transactions on Graphics} \bibinfo{volume}{25},
  \bibinfo{pages}{39--59}.
\bibitem[{Fan et~al.(2010)Fan, Yu and Peng}]{Fan10}
\bibinfo{author}{Fan, H.}, \bibinfo{author}{Yu, Y.}, \bibinfo{author}{Peng,
  Q.}, \bibinfo{year}{2010}.
\newblock \bibinfo{title}{Robust feature-preserving mesh denoising based on
  consistent subneighborhoods}.
\newblock \bibinfo{journal}{IEEE Transactions on Visualization and Computer
  Graphics} \bibinfo{volume}{16}, \bibinfo{pages}{312--324}.
\bibitem[{Fleishman et~al.(2005)Fleishman, Cohen{-}Or and
  Silva}]{FleishmanCS05:MLSfitting}
\bibinfo{author}{Fleishman, S.}, \bibinfo{author}{Cohen{-}Or, D.},
  \bibinfo{author}{Silva, C.T.}, \bibinfo{year}{2005}.
\newblock \bibinfo{title}{Robust moving least-squares fitting with sharp
  features}.
\newblock \bibinfo{journal}{{ACM} Trans. Graph.} \bibinfo{volume}{24},
  \bibinfo{pages}{544--552}.
\bibitem[{Fleishman et~al.(2003)Fleishman, Drori and Cohon-Or}]{Fleishman03}
\bibinfo{author}{Fleishman, S.}, \bibinfo{author}{Drori, I.},
  \bibinfo{author}{Cohon-Or, D.}, \bibinfo{year}{2003}.
\newblock \bibinfo{title}{Bilateral mesh denoising}.
\newblock \bibinfo{journal}{ACM Transactions on Graphics} \bibinfo{volume}{22},
  \bibinfo{pages}{950--953}.
\bibitem[{Goldfeather and Interrante(2004)}]{Goldfeather2004}
\bibinfo{author}{Goldfeather, J.}, \bibinfo{author}{Interrante, V.},
  \bibinfo{year}{2004}.
\newblock \bibinfo{title}{A novel cubic-order algorithm for approximating
  principal direction vectors}.
\newblock \bibinfo{journal}{ACM Transactions on Graphics} \bibinfo{volume}{23},
  \bibinfo{pages}{45--63}.
\bibitem[{He and Schaefer(2013)}]{HeSchaefer13:L0meshdenoising}
\bibinfo{author}{He, L.}, \bibinfo{author}{Schaefer, S.}, \bibinfo{year}{2013}.
\newblock \bibinfo{title}{Mesh denoising via $l_0$ minimization}.
\newblock \bibinfo{journal}{{ACM} Trans. Graph.} \bibinfo{volume}{32},
  \bibinfo{pages}{64:1--64:8}.
\bibitem[{Hildebrandt and Polthier(2004)}]{Polthier04}
\bibinfo{author}{Hildebrandt, K.}, \bibinfo{author}{Polthier, K.},
  \bibinfo{year}{2004}.
\newblock \bibinfo{title}{Anisotropic filtering of non-linear surface
  features}.
\newblock \bibinfo{journal}{Computer Graphics Forum} \bibinfo{volume}{23},
  \bibinfo{pages}{391--400}.
\bibitem[{Hildebrandt and Polthier(2007)}]{Polthier07}
\bibinfo{author}{Hildebrandt, K.}, \bibinfo{author}{Polthier, K.},
  \bibinfo{year}{2007}.
\newblock \bibinfo{title}{Constrained-based fairing of surface meshes}, in:
  \bibinfo{booktitle}{Proceedings of Eurographics symposium on geometry
  processing}, pp. \bibinfo{pages}{203--212}.
\bibitem[{Huang et~al.(2013)Huang, Wu, Gong, Cohen{-}Or, Ascher and
  Zhang}]{Huangetal2013EdgeAwarePointSetResampling}
\bibinfo{author}{Huang, H.}, \bibinfo{author}{Wu, S.}, \bibinfo{author}{Gong,
  M.}, \bibinfo{author}{Cohen{-}Or, D.}, \bibinfo{author}{Ascher, U.M.},
  \bibinfo{author}{Zhang, H.R.}, \bibinfo{year}{2013}.
\newblock \bibinfo{title}{Edge-aware point set resampling}.
\newblock \bibinfo{journal}{{ACM} Trans. Graph.} \bibinfo{volume}{32},
  \bibinfo{pages}{9:1--9:12}.
\bibitem[{Jones et~al.(2003)Jones, Durand and Desbrun}]{Jones03}
\bibinfo{author}{Jones, T.R.}, \bibinfo{author}{Durand, F.},
  \bibinfo{author}{Desbrun, M.}, \bibinfo{year}{2003}.
\newblock \bibinfo{title}{Non-iterative, feature preserving mesh smoothing}.
\newblock \bibinfo{journal}{ACM Transactions on Graphics} \bibinfo{volume}{22},
  \bibinfo{pages}{943--949}.
\bibitem[{Lipman et~al.(2007)Lipman, Cohen{-}Or and
  Levin}]{LipmanCL07:data-dependentMLSapproximation}
\bibinfo{author}{Lipman, Y.}, \bibinfo{author}{Cohen{-}Or, D.},
  \bibinfo{author}{Levin, D.}, \bibinfo{year}{2007}.
\newblock \bibinfo{title}{Data-dependent {MLS} for faithful surface
  approximation}, in: \bibinfo{booktitle}{Proceedings of the Fifth Eurographics
  Symposium on Geometry Processing, Barcelona, Spain, July 4-6, 2007}, pp.
  \bibinfo{pages}{59--67}.
\bibitem[{Meyer et~al.(2003)Meyer, Desbrun, Schr{\"o}der and Barr}]{Meyer03}
\bibinfo{author}{Meyer, M.}, \bibinfo{author}{Desbrun, M.},
  \bibinfo{author}{Schr{\"o}der, P.}, \bibinfo{author}{Barr, A.H.},
  \bibinfo{year}{2003}.
\newblock \bibinfo{title}{Discrete differential-geometry operators for
  triangulated 2-manifolds}, in: \bibinfo{booktitle}{Visualization and
  Mathematics III}, pp. \bibinfo{pages}{35--57}.
\bibitem[{Peng et~al.(2001)Peng, Strela and Zorin}]{Peng2001}
\bibinfo{author}{Peng, J.}, \bibinfo{author}{Strela, V.},
  \bibinfo{author}{Zorin, D.}, \bibinfo{year}{2001}.
\newblock \bibinfo{title}{A simple algorithm for surface denoising}.
\newblock \bibinfo{howpublished}{Tech. Rep.}
\bibitem[{Schneider and Kobbelt(2001)}]{SchneiderK2001GeoFairing}
\bibinfo{author}{Schneider, R.}, \bibinfo{author}{Kobbelt, L.},
  \bibinfo{year}{2001}.
\newblock \bibinfo{title}{Geometric fairing of irregular meshes for free-form
  surface design}.
\newblock \bibinfo{journal}{Computer Aided Geometric Design}
  \bibinfo{volume}{18}, \bibinfo{pages}{359--379}.
\bibitem[{Shen and Barner(2004)}]{Shen04}
\bibinfo{author}{Shen, Y.}, \bibinfo{author}{Barner, K.E.},
  \bibinfo{year}{2004}.
\newblock \bibinfo{title}{Fuzzy vector median-based surface smoothing}.
\newblock \bibinfo{journal}{IEEE Transactions on Visualization and Computer
  Graphics} \bibinfo{volume}{10}, \bibinfo{pages}{252--265}.
\bibitem[{Shimizu et~al.(2005)Shimizu, Date, Kanai and Kishinami}]{Shimizu2005}
\bibinfo{author}{Shimizu, T.}, \bibinfo{author}{Date, H.},
  \bibinfo{author}{Kanai, S.}, \bibinfo{author}{Kishinami, T.},
  \bibinfo{year}{2005}.
\newblock \bibinfo{title}{A new bilateral mesh smoothing method by recognizing
  features}, in: \bibinfo{booktitle}{Proceedings of the Ninth International
  Conference on Computer Aided Design and Computer Graphics}, pp.
  \bibinfo{pages}{281--286}.
\bibitem[{Sun et~al.(2007)Sun, Rosin, Martin and Langbein}]{Sun07}
\bibinfo{author}{Sun, X.}, \bibinfo{author}{Rosin, P.L.},
  \bibinfo{author}{Martin, R.R.}, \bibinfo{author}{Langbein, F.C.},
  \bibinfo{year}{2007}.
\newblock \bibinfo{title}{Fast and effective feature-preserving mesh
  denoising}.
\newblock \bibinfo{journal}{IEEE Transactions on Visualization and Computer
  Graphics} \bibinfo{volume}{13}, \bibinfo{pages}{925--938}.
\bibitem[{Sun et~al.(2008)Sun, Rosin, Martin and Langbein}]{Sun08}
\bibinfo{author}{Sun, X.}, \bibinfo{author}{Rosin, P.L.},
  \bibinfo{author}{Martin, R.R.}, \bibinfo{author}{Langbein, F.C.},
  \bibinfo{year}{2008}.
\newblock \bibinfo{title}{Random walks for feature preserving mesh denoising}.
\newblock \bibinfo{journal}{Computer Aided Geometric Design}
  \bibinfo{volume}{25}, \bibinfo{pages}{437--456}.
\bibitem[{Tasdizen et~al.(2003)Tasdizen, Whitaker, Burchard and Osher}]{Tasd03}
\bibinfo{author}{Tasdizen, T.}, \bibinfo{author}{Whitaker, R.},
  \bibinfo{author}{Burchard, P.}, \bibinfo{author}{Osher, S.},
  \bibinfo{year}{2003}.
\newblock \bibinfo{title}{Geometric surface processing via normal maps}.
\newblock \bibinfo{journal}{ACM Transactions on Graphics} \bibinfo{volume}{22},
  \bibinfo{pages}{1012--1033}.
\bibitem[{Taubin(1995)}]{Tau95}
\bibinfo{author}{Taubin, G.}, \bibinfo{year}{1995}.
\newblock \bibinfo{title}{A signal processing approach to fair surface design},
  in: \bibinfo{booktitle}{Proceedings of SIGGRAPH 1995},
  \bibinfo{organization}{ACM}. pp. \bibinfo{pages}{351--358}.
\bibitem[{Wang and Yu(2011)}]{WangY11:meshsmoothingvialocalsurfacefitting}
\bibinfo{author}{Wang, J.}, \bibinfo{author}{Yu, Z.}, \bibinfo{year}{2011}.
\newblock \bibinfo{title}{Quality mesh smoothing via local surface fitting and
  optimum projection}.
\newblock \bibinfo{journal}{Graphical Models} \bibinfo{volume}{73},
  \bibinfo{pages}{127--139}.
\bibitem[{Wang et~al.(2016)Wang, Liu and
  Tong}]{WangLT16:cascadednormalregression}
\bibinfo{author}{Wang, P.}, \bibinfo{author}{Liu, Y.}, \bibinfo{author}{Tong,
  X.}, \bibinfo{year}{2016}.
\newblock \bibinfo{title}{Mesh denoising via cascaded normal regression}.
\newblock \bibinfo{journal}{{ACM} Trans. Graph.} \bibinfo{volume}{35},
  \bibinfo{pages}{232:1--232:12}.
\bibitem[{Wei et~al.(2017)Wei, Liang, Pang, Wang, Li and
  Wu}]{WeiLPWLW17:TensorVotingGuidedMeshDenoising}
\bibinfo{author}{Wei, M.}, \bibinfo{author}{Liang, L.}, \bibinfo{author}{Pang,
  W.}, \bibinfo{author}{Wang, J.}, \bibinfo{author}{Li, W.},
  \bibinfo{author}{Wu, H.}, \bibinfo{year}{2017}.
\newblock \bibinfo{title}{Tensor voting guided mesh denoising}.
\newblock \bibinfo{journal}{{IEEE} Trans. Automation Science and Engineering}
  \bibinfo{volume}{14}, \bibinfo{pages}{931--945}.
\bibitem[{Wei et~al.(2015)Wei, Zhu, Yu, Wang, Pang, Wu, Qin and
  Heng}]{WeiZYWPWQH15:smoothbinaryvolumes}
\bibinfo{author}{Wei, M.}, \bibinfo{author}{Zhu, L.}, \bibinfo{author}{Yu, J.},
  \bibinfo{author}{Wang, J.}, \bibinfo{author}{Pang, W.}, \bibinfo{author}{Wu,
  J.}, \bibinfo{author}{Qin, J.}, \bibinfo{author}{Heng, P.},
  \bibinfo{year}{2015}.
\newblock \bibinfo{title}{Morphology-preserving smoothing on polygonized
  isosurfaces of inhomogeneous binary volumes}.
\newblock \bibinfo{journal}{Computer-Aided Design} \bibinfo{volume}{58},
  \bibinfo{pages}{92--98}.
\bibitem[{Wu et~al.(2015)Wu, Zheng, Cai and Fu}]{WuZCF15:L1meshdenoising}
\bibinfo{author}{Wu, X.}, \bibinfo{author}{Zheng, J.}, \bibinfo{author}{Cai,
  Y.}, \bibinfo{author}{Fu, C.}, \bibinfo{year}{2015}.
\newblock \bibinfo{title}{Mesh denoising using extended {ROF} model with $l_1$
  fidelity}.
\newblock \bibinfo{journal}{Comput. Graph. Forum} \bibinfo{volume}{34},
  \bibinfo{pages}{35--45}.
\bibitem[{Yadav et~al.(2018)Yadav, Reitebuch and
  Polthier}]{YadavRP18:Normalvotingtensorbinaryoptim}
\bibinfo{author}{Yadav, S.K.}, \bibinfo{author}{Reitebuch, U.},
  \bibinfo{author}{Polthier, K.}, \bibinfo{year}{2018}.
\newblock \bibinfo{title}{Mesh denoising based on normal voting tensor and
  binary optimization}.
\newblock \bibinfo{journal}{{IEEE} Trans. Vis. Comput. Graph.}
  \bibinfo{volume}{24}, \bibinfo{pages}{2366--2379}.
\bibitem[{Yadav et~al.(2019)Yadav, Reitebuch and
  Polthier}]{YadavRP18:HighFidelityDenoising}
\bibinfo{author}{Yadav, S.K.}, \bibinfo{author}{Reitebuch, U.},
  \bibinfo{author}{Polthier, K.}, \bibinfo{year}{2019}.
\newblock \bibinfo{title}{Robust and high fidelity mesh denoising}.
\newblock \bibinfo{journal}{{IEEE} Trans. Vis. Comput. Graph.}
  \bibinfo{volume}{25}, \bibinfo{pages}{2304--2310}.
\bibitem[{Yang(2016)}]{Yang2016:MatrixWeightRational}
\bibinfo{author}{Yang, X.}, \bibinfo{year}{2016}.
\newblock \bibinfo{title}{Matrix weighted rational curves and surfaces}.
\newblock \bibinfo{journal}{Computer Aided Geometric Design}
  \bibinfo{volume}{42}, \bibinfo{pages}{40--53}.
\bibitem[{Yang(2018)}]{Yang2018:fairingMatNURBScurve}
\bibinfo{author}{Yang, X.}, \bibinfo{year}{2018}.
\newblock \bibinfo{title}{Fitting and fairing {H}ermite-type data by matrix
  weighted {NURBS} curves}.
\newblock \bibinfo{journal}{Computer-Aided Design} \bibinfo{volume}{102},
  \bibinfo{pages}{22--32}.
\bibitem[{Zhang et~al.(2015a)Zhang, Wu, Zhang and
  Deng}]{ZhangWuZD15:variationMeshNormalDenoising}
\bibinfo{author}{Zhang, H.}, \bibinfo{author}{Wu, C.}, \bibinfo{author}{Zhang,
  J.}, \bibinfo{author}{Deng, J.}, \bibinfo{year}{2015}a.
\newblock \bibinfo{title}{Variational mesh denoising using total variation and
  piecewise constant function space}.
\newblock \bibinfo{journal}{{IEEE} Trans. Vis. Comput. Graph.}
  \bibinfo{volume}{21}, \bibinfo{pages}{873--886}.
\bibitem[{Zhang et~al.(2015b)Zhang, Deng, Zhang, Bouaziz and
  Liu}]{ZhangDZBL15:guidedNormalFiltering}
\bibinfo{author}{Zhang, W.}, \bibinfo{author}{Deng, B.},
  \bibinfo{author}{Zhang, J.}, \bibinfo{author}{Bouaziz, S.},
  \bibinfo{author}{Liu, L.}, \bibinfo{year}{2015}b.
\newblock \bibinfo{title}{Guided mesh normal filtering}.
\newblock \bibinfo{journal}{Comput. Graph. Forum} \bibinfo{volume}{34},
  \bibinfo{pages}{23--34}.
\bibitem[{Zhang and Hamza(2007)}]{ZhangBenHamza2007}
\bibinfo{author}{Zhang, Y.}, \bibinfo{author}{Hamza, A.B.},
  \bibinfo{year}{2007}.
\newblock \bibinfo{title}{Vertex-based diffusion for 3d mesh denoising}.
\newblock \bibinfo{journal}{IEEE Transactions on Image Processing}
  \bibinfo{volume}{16}, \bibinfo{pages}{1036--1045}.
\bibitem[{Zheng et~al.(2011)Zheng, Fu, Au and Tai}]{ZhengFuTai2011}
\bibinfo{author}{Zheng, Y.}, \bibinfo{author}{Fu, H.}, \bibinfo{author}{Au,
  O.K.C.}, \bibinfo{author}{Tai, C.L.}, \bibinfo{year}{2011}.
\newblock \bibinfo{title}{Bilateral normal filtering for mesh denoising}.
\newblock \bibinfo{journal}{IEEE Transactions on Visualization and Computer
  Graphics} \bibinfo{volume}{17}, \bibinfo{pages}{1521--1530}.

\end{thebibliography}

\end{document}